\title{A report on an ergodic dichotomy}
\author[A. Sambarino]{Andr\'es Sambarino}
\newcommand{\Z}{\mathbb{Z}}
\newcommand{\R}{\mathbb{R}}
\newcommand{\C}{\mathbb{C}}
\newcommand{\K}{\mathbb{K}}
\newcommand{\N}{\mathbb{N}}
\newcommand{\DD}{\mathbb{D}}
\renewcommand{\SS}{\mathbb{S}}
\renewcommand{\P}{\mathbb{P}}
\renewcommand{\/}{\backslash}
\newcommand{\eps}{\varepsilon}
\newcommand{\G}{\Upgamma}
\newcommand{\<}{\left<}
\renewcommand{\>}{\right>}
\newcommand{\E}{\sf E}
\newcommand{\g}{\gamma}
\newcommand{\bord}{\partial}
\newcommand{\cone}{\scr L}
\newcommand{\bus}{\beta}
\newcommand{\grupo}{\Delta}
\newcommand{\posgen}{\cal F^{(2)}}
\renewcommand{\t}{\vartheta}
\newcommand{\vt}{\theta}
\newcommand{\dirac}{\updelta}
\newcommand{\Gr}{\mathscr G}
\newcommand{\cono}{{\cal C}}
\newcommand{\h}{\scr h}
\newcommand{\LL}{\cal J}
\newcommand{\II}{\mathbf{I}}
\newcommand{\1}{\mathbf 1}
\renewcommand{\L}{\Lambda}
\renewcommand{\a}{{\frak a}}
\newcommand{\simple}{{\sf\Delta}}
\renewcommand{\root}{{\sf\Phi}}
\renewcommand{\ge}{{\frak g}}
\renewcommand{\aa}{\alpha}
\newcommand{\Weyl}{\cal W}
\newcommand{\wk}{\check}
\newcommand{\peso}{\varpi}
\newcommand{\BM}{\Omega}
\newcommand{\sroot}{\sigma}
\newcommand{\Fund}{\upphi}
\newcommand{\RR}[1]{{\upchi^{#1}}}
\newcommand{\U}{\upxi}
\newcommand{\tube}{{\mathbb T}}
\newcommand{\pp}{{\sf p}}
\newcommand{\rr}{{\sf r}}
\newcommand{\cartan}{a}
\newcommand{\conodual}{\big(\cone_{\t,\rho}\big)^*}
\newcommand{\df}{\upomega}
\newcommand{\Po}{\cal P}
\newcommand{\bb}[1]{\upbeta_{#1}}
\newcommand{\bd}[1]{\bar{\upbeta}_{#1}}
\newcommand{\susp}{\Upsigma}
\newcommand{\scr}{\mathscr}
\renewcommand{\sf}[1]{{\mathsf{#1}}}
\newcommand{\cal}{\mathcal}
\renewcommand{\frak}{\mathfrak}
\renewcommand{\angle}{\measuredangle}
\renewcommand{\L}{\mathrm{L}}
\DeclareMathOperator{\ii}{i}
\DeclareMathOperator{\inte}{int}
\DeclareMathOperator{\GL}{\sf{GL}}
\DeclareMathOperator{\Leb}{Leb}
\DeclareMathOperator{\grassman}{Gr}
\DeclareMathOperator{\clase}{C}
\DeclareMathOperator{\id}{id}
\DeclareMathOperator{\PGL}{\sf{PGL}}
\DeclareMathOperator{\holder}{Holder}
\DeclareMathOperator{\spa}{span}
\DeclareMathOperator{\rk}{rank}
\DeclareMathOperator{\Hess}{Hess}
\newcommand{\sus}{\bar\Upomega}
\newcommand{\EE}{\Sigma}
\newcommand{\cjto}{\G\/\big(\bord^2\G\times V\big)}
\newcommand{\cjtot}{\G\/\big(\bord^2\G\times \E_\t\big)}
\newcommand{\PP}{\mathbf{P}}
\newcommand{\QQ}{\mathbf{Q}}
\newcommand{\cdc}{\big(\cone_c\big)^*}
\newcommand{\medidas}{\cal M}
\newcommand{\ann}{\mathrm{Ann}}
\newcommand{\BB}{\mathrm{B}}
\newcommand{\con}[1]{\bord_{#1}\G}
\renewcommand{\ss}{\mathrm{s}}
\newcommand{\uu}{\mathrm{u}}
\newcommand{\cu}{\mathrm{cu}}
\newcommand{\dual}{\wp}
\newcommand{\lb}{\llbracket}
\newcommand{\rb}{\rrbracket}
\newcommand{\nocontentsline}[3]{}
\newcommand{\tocless}[2]{\bgroup\let\addcontentsline=\nocontentsline#1{#2}\egroup}
\newtheorem{thmA}{Theorem}
\newtheorem{thm1}[subsubsection]{Theorem}
\newtheorem{prop1}[subsubsection]{Proposition}
\newtheorem{lema1}[subsubsection]{Lemma}
\newtheorem{cor1}[subsubsection]{Corollary}
\newtheorem{thm}{Theorem}[section]
\newtheorem{lema}[subsection]{Lemma}
\newtheorem{cor}[thm]{Corollary}
\theoremstyle{definition}
\newtheorem{obs}[subsection]{Remark}
\newtheorem{defi1}[subsubsection]{Definition}
\newtheorem{obs*}[subsubsection]{Remark}
\newtheorem{assu}{Assumption}
\newtheorem*{nott}{Setting}
\newtheorem*{notacion}{Notation}
\theoremstyle{remark}
\thanks{A.S. was partially financed by ANR DynGeo ANR-16-CE40-0025.}
\begin{document}

\begin{abstract} We establish (some directions) of a \emph{Ledrappier correspondence} between H\"older cocycles, Patterson-Sullivan measures, etc for word-hyperbolic groups with metric-Anosov Mineyev flow. We then study Patterson-Sullivan measures for $\t$-Anosov representations over a local field and show that these are parametrized by the $\t$-\emph{critical hypersurface} of the representation. We use these Patterson-Sullivan measures to establish a dichotomy concerning directions in the interior of the $\t$-limit cone of the representation in question: if $\sf u$ is such a half-line, then the subset of $\sf u$-\emph{conical limit points} has either total-mass if $|\t|\leq2$ or zero-mass if $|\t|\geq4.$ The case $|\t|=3$ remains unsettled.\end{abstract}

\maketitle

\tableofcontents

\section{Introduction}

Let $\sf G$ be the real points of a semi-simple real-algebraic group of the non-compact type. The (Riemannian) globally symmetric space $\sf X$ associated to $\sf G$ is non-positively curved, its visual boundary $\bord_\infty \sf X$ is a union of compact $\sf G$-orbits, para\-me\-tri\-zed by directions in a (fixed beforehand) closed Weyl chamber $\a^+$ of $\ge.$  The $\sf G$-orbit associated to a direction $\sf u\in\P(\a^+)$ is $\sf G$-equivariantly identified with the flag space $\cal F_{\t_\sf u}$ of $\sf G,$ where $\t_{\sf u}$ is the subset of simple roots that do not vanish on $\sf u.$

Let now $\G$ be a finitely generated group and $\rho:\G\to\sf G$ a representation with discrete image. A fundamental object of study is the \emph{limit set} $\L_{\rho}$ of $\rho(\G)$ on the visual boundary $\bord_\infty\sf X,$ defined as the set of accumulation points of a (any) orbit $\rho(\G)\cdot o$ on the natural compactification $\sf X\cup\bord_\infty\sf X.$

When $\rho(\G)$ is Zariski-dense, this object has the following topological description by Benoist \cite{limite}: the action of $\rho(\G)$ on each flag space $\cal F_\t$ has a smallest closed invariant set, called the \emph{limit set on $\cal F_\t$} and denoted by $\L_\rho^\t$; on the other hand one has the \emph{limit cone} $\cone_\rho\subset\a^+$ of $\rho(\G),$ defined as the subset of $\a^+$ of accumulation points of sequences of the form $$t_n\cartan\big(\rho(\g_n)\big),$$ where $t_n\in\R_+$ converges to $0,$ $\g_n\in\G$ goes to infinity and $\cartan:\sf G\to\a^+$ is the \emph{Cartan projection}. It is a convex cone with non-empty interior and the limit set $\L_{\rho(\G)}$ on $\bord_\infty\sf X$ is the "fibration" over $\P(\cone_\rho),$ whose fiber over a given direction $\sf u\in\P(\cone_\rho)$ is the limit set $\L_\rho^{\t_\sf u}$ of $\rho(\G)$ on $\cal F_{\t_\sf u}.$ 

Inspired by the rank 1 case, as in Sullivan \cite{sullivan}, one may seek to distinguish the subset of \emph{conical points}  of $\L_\rho$, i.e. points on the limit set that are approached in a uniform manner by elements of the orbit $\rho(\G)\cdot o.$ However, the definition of uniform depends on:\begin{itemize}\item[-] the type of $\sf G$-orbit the point lies in: a point $x\in\L_\rho^\t$ is \emph{conical} if there exists a (to be called \emph{conical}) sequence $\{\g_n\}\subset\G$ converging to $x$ such that for every $y\in\L^{\ii\t}_\rho$ in general position\footnote{here we let $\ii:\a\to\a$ be the opposition involution and $\ii\t:=\t\circ\ii,$} with $x$ the sequence $\g_n^{-1}(y,x)$ has compact closure on the space of pairs of flags in general position $\cal F_\t^{(2)},$
\item[-] the specific direction $\sf u\in\P(\cone_\rho),$ associated to the given point: fix a norm $\|\,\|$ on $\a$ and define the \emph{tube of size} $r>0$ as the $r$-tubular neighborhood $$\tube_r(\sf u)=\{v\in\a:B(v,r)\cap\sf u\neq\emptyset\},$$ then $x\in\L^{\t_{\sf u}}_\rho$ is $\sf u$-\emph{conical} if there exists $r>0$ and a conical sequence $\g_n\to x$ such that for all $n$ one has $$\cartan\big(\rho(\g_n)\big)\in\tube_r(\sf u).$$\end{itemize}

A measurable description has been recently established by Burger-Landesberg-Lee-Oh \cite{BLLH} for $\sf u$-conical points of Zariski-dense subgroups: under some extra assumptions, the Patterson-Sullivan measure associated to the direction $\sf u$ charges totally the subset of $\sf u$-conical points iff $\sf G$ has rank $\leq3,$ if $\rk\sf G\geq4$ then the subset of $\sf u$-conical points has zero mass.

In this paper we will also study a measurable description of $\sf u$-conical limit points, but for general Anosov representations, a class introduced by Labourie \cite{labourie} for fundamental groups of closed negatively-curved manifolds and generalized by Guichard-Wienhard \cite{olivieranna} for arbitrary (finitely generated) word-hyperbolic groups. Thanks to the recent work by Kapovich-Leeb-Porti \cite{KLP-Morse} (see also Bochi-Potrie-S. \cite{BPS} and Gu\'eritaud-Guichard-Kassel-Wienhard \cite{GGKW}) we can define them as follows, see \S\,\ref{coarse}.

\begin{defi1} Let $\t\subset\simple$ be a non-empty subset of simple roots and denote by $|\,|$ the word length on $\G$ for some (fixed) symmetric generating set. A representation $\rho:\G\to\sf G$ is $\t$-\emph{Anosov} if there exist positive constants $c,\mu$ such that for all $\g\in\G$ and $\sroot\in\t$ one has $$\sroot\Big(\cartan\big(\rho(\g)\big)\Big)\geq \mu|\g|-c.$$ 
\end{defi1}

A key feature of a $\t$-Anosov representation $\rho$ is that $\G$ is necessarily word-hyperbolic and there exist continuous $\rho$-equivariant limit maps (Proposition \ref{conical}) defined on its Gromov-boundary, \begin{alignat*}{2}\xi^\t&:\bord\G\to\cal F_\t\\ \xi^{\ii\t}&:\bord\G\to\cal F_{\ii\t},\end{alignat*} such that the flags $\xi^{\ii\t}(x)$ and $\xi^\t(y)$ are in general position whenever $x\neq y.$

We begin by studying the Patterson-Sullivan theory for these groups. Fix then $\t\subset\simple,$ let $$\a_\t=\bigcap_{\sroot\in\simple-\t}\ker\sroot$$ be the center of the associated Levi group and let $p_\t:\a\to\a_\t$ be the projection invariant under the subgroup of the Weyl group point-wise fixing $\a_\t$ (see \S\,\ref{center}). The dual space $(\a_\t)^*$ sits naturally as the subspace of $\a^*$ of $p_\t$-invariant linear forms. It is spanned by the fundamental weights of the elements in $\t$: $$(\a_\t)^*=\<\big\{\peso_\sroot|\a_\t:\sroot\in\t\big\}\>.$$ Let us write $\cartan_\t$ for the composition $a_\t=p_\t\circ\cartan:\sf G\to\a_\t.$

Let $\bus:\sf G\times\cal F_\simple\to\a$ be the \emph{Buseman-Iwasawa cocycle} of $\sf G$ introduced by Quint \cite{quint1} (see \S\,\ref{BI}). The map $\bus_\t=p_\t\circ\bus$ factors as a cocycle $\bus_\t:\sf G\times \cal F_\t\to\a_\t.$

\begin{defi1}A \emph{Patterson-Sullivan measure for $\rho$ on $\cal F_\t$} is a probability measure $\nu$ on $\cal F_\t$ such that there exists $\varphi\in\a_\t^*$ with, for every $\g\in\G,$  $$\frac{d\rho(\g) _*\nu}{d\nu}(\cdot)=q^{-\varphi\big(\bus_\t(\rho(\g)^{-1},\cdot)\big)}.$$ \end{defi1}

For $\varphi\in(\a_\t)^*$ denote by $$\delta^\varphi =\lim_{t\to\infty}\frac1t\log\#\Big\{\g\in\G:\varphi\Big(\cartan\big(\rho(\g)\big)\Big)\leq t\Big\}\in[0,\infty]$$ and, inspired by Quint's growth indicator \cite{quint2}, consider the $\t$-\emph{critical hypersurface} $$\cal Q_{\t,\rho} =\big\{\varphi\in(\a_\t)^*:\delta^\varphi=1\big\}.$$ Let us define the $\t$-\emph{limit cone} of $\rho,$ denoted by $\cone_{\t,\rho},$ as the asymptotic cone of the projections $$\big\{\cartan_\t\big(\rho(\g)\big):\g\in\G\},$$ i.e. all limits of sequences of the form $t_n \cartan_\t\big(\rho(\g_n)\big),$ where $\g_n\to\infty$ in $\G$ and $t_n\to0$ in $\R_+.$ 

In the real case, if $\rho(\G)$ is Zariski-dense, then Benoist's aforementioned result implies that $\cone_{\t,\rho}$ has non-empty interior. However, for arbitrary local fields this is no longer the case\footnote{(even assuming Zariski-density and Anosov)}. We aim to work on this more general context, so let us assume now that $\sf G$ is (the $\K$-points of) a semi-simple algebraic group over a local field $\K,$ we refer the reader to \S\,\ref{localfield} for the analogous definitions, where $\a_\t$ is replaced by the real vector space $\E_\t,$ etc.

Let $\ann(\cone_{\t,\rho})$ be the annihilator of the $\t$-limit cone and denote by $\pi^\t_\rho:(\E_\t)^*\to(\E_\t)^*/\ann(\cone_{\t,\rho})$ the quotient projection.

\begin{thmA}\label{A} Let $\rho:\G\to\sf G$ be $\t$-Anosov. Then, $\cal Q_{\t,\rho}$ is a closed co-dimension-one analytic sub-manifold of $(\E_\t)^*$ that bounds a convex set;  moreover the projection $\pi^\t_\rho\big(\cal Q_{\t,\rho}\big)$ is also a closed co-dimension-one analytic sub-manifold, boundary of a strictly convex set. For each $\varphi\in\cal Q_{\t,\rho}$ there exists a unique Patterson-Sullivan measure $\nu^\varphi$ with support on $\xi^\t(\bord\G).$ The map $\varphi\mapsto\nu^\varphi$ is an analytic homeomorphism between the projection $\pi^\t_\rho\big(\cal Q_{\t,\rho}\big)$ and the space of Patterson-Sullivan measures on $\cal F_\t$ whose support is contained in $\xi^\t(\bord\G).$ Such Patterson-Sullivan measures are ergodic and pairwise mutually singular. 
\end{thmA}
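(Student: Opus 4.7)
The plan is to apply the Ledrappier-type correspondence developed earlier in the paper, which places H\"older cocycles on $\G\times\bord\G$, Patterson-Sullivan measures on $\bord\G$, and Gibbs/equilibrium states for the metric-Anosov Mineyev flow $\UG$ on the same footing. Since $\rho$ is $\t$-Anosov, the limit map $\xi^\t$ is H\"older continuous, so the pullback $\vect_\rho(\g,x):=\bus_\t(\rho(\g),\xi^\t(x))$ is an $\E_\t$-valued H\"older cocycle on $\G\times\bord\G$ whose periods at the attracting fixed points recover the $\t$-Jordan projection $\ell_\t(\g)$. The Anosov inequality guarantees that $\vect_\rho(\g,\cdot)$ stays uniformly within bounded distance of $\cartan_\t(\rho(\g))$, so for every $\varphi\in(\E_\t)^*$ the exponent $\delta^\varphi$ defined via the Cartan projection agrees with the critical exponent of the real H\"older cocycle $\varphi\circ\vect_\rho$.

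Next I would examine the topological pressure function $\varphi\mapsto P(-\varphi\circ\vect_\rho)$ on $(\E_\t)^*$. By the thermodynamic formalism for the metric-Anosov Mineyev flow it is real-analytic and convex, and a Manning-style identification gives $\delta^\varphi=1$ iff $P(-\varphi\circ\vect_\rho)=0$. This exhibits $\cal Q_{\t,\rho}$ as the zero-set of an analytic convex function, hence as a closed analytic codimension-one submanifold bounding the convex sublevel set $\{P\leq 0\}$. For the strict convexity of $\pi^\t_\rho(\cal Q_{\t,\rho})$ the key point is that $\cone_{\t,\rho}$ is generated, as a closed cone, by the periods $\{\ell_\t(\g)\}_{\g\in\G}$, so by a Livsic-type argument $\varphi\circ\vect_\rho$ is cohomologous to zero iff $\varphi\in\ann(\cone_{\t,\rho})$. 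The standard strict convexity of pressure in non-coboundary directions then descends to strict convexity of $P$ modulo $\ann(\cone_{\t,\rho})$, yielding strict convexity of the projected hypersurface.

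For each $\varphi\in\cal Q_{\t,\rho}$ the measure $\nu^\varphi$ is produced by the Patterson-Sullivan procedure applied to the series $\sum_{\g\in\G}q^{-s\varphi(\cartan_\t(\rho(\g)))}\dirac_{\rho(\g)o}$ at the critical parameter $s=1$; any weak-$*$ accumulation point concentrates on $\xi^\t(\bord\G)$ by the Anosov property and transforms by $\bus_\t$ with exponent $\varphi$. Uniqueness, ergodicity, mutual singularity and the analytic homeomorphism then transfer to the flow side: each $\nu^\varphi$ corresponds to the unique equilibrium state of the potential $-\varphi\circ\vect_\rho$ on $\UG$, and by Bowen-Ruelle such equilibrium states are unique and ergodic, depend analytically on their H\"older potential, and are mutually singular for non-cohomologous potentials. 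By the previous paragraph, two forms $\varphi,\psi$ give cohomologous potentials only when $\varphi-\psi\in\ann(\cone_{\t,\rho})$, so distinct classes in $\pi^\t_\rho(\cal Q_{\t,\rho})$ yield mutually singular measures.

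The main technical obstacle is working over a general local field $\K$, where $\cone_{\t,\rho}$ may have empty interior in $\E_\t$: Benoist's theorem is not available, so strict convexity can only be extracted after quotienting by $\ann(\cone_{\t,\rho})$, and one must verify carefully that the cone of periods is cofinal in $\cone_{\t,\rho}$. Identifying $\delta^\varphi$ with the critical exponent of $\varphi\circ\vect_\rho$ uniformly in $\varphi$, and matching the support of the flow-side Gibbs state with $\xi^\t(\bord\G)$, are the other places where the Anosov hypothesis must be exploited with care.
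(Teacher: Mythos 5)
Your proposal follows essentially the same route as the paper: pull back the Busemann--Iwasawa cocycle by $\xi^\t$ to get the refraction cocycle, pass through the Ledrappier correspondence to the thermodynamic formalism of the metric-Anosov flow, identify $\cal Q_{\t,\rho}$ with the zero set of the analytic pressure functional $\varphi\mapsto P(\phi,-\varphi(\LL_{\t,\rho}))$, get strict convexity only after quotienting by $\ann(\cone_{\t,\rho})$ via the Liv\v sic characterization of coboundaries, construct $\nu^\varphi$ by the Patterson procedure, and derive uniqueness, ergodicity, analyticity and mutual singularity from uniqueness of equilibrium states. All of this matches Corollaries \ref{existe}, \ref{e=1} and Proposition \ref{corr}.

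One step is elided in a way that matters for the claim that $\varphi\mapsto\nu^\varphi$ is a homeomorphism \emph{onto} the space of Patterson--Sullivan measures supported on $\xi^\t(\bord\G)$: you must show that an arbitrary such measure, for an arbitrary functional $\varphi\in(\E_\t)^*$, forces $\varphi\in\inte\conodual$ with $\delta^\varphi=1$; otherwise the refraction flow for $\varphi$ need not exist and the correspondence with equilibrium states cannot even be set up. The paper handles this (Proposition \ref{jaco}) with a Sullivan shadow lemma together with the bounded-overlap coverings of $\xi^\t(\bord\G)$ by Cartan basins, which yield $P(\phi,-\delta\,\varphi(\LL_{\t,\rho}))\leq0$ and hence, by the positivity criterion of Lemma \ref{positiva}, that $\varphi(\LL_{\t,\rho})$ is Liv\v sic-cohomologous to a positive function. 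Relatedly, the passage from a one-sided Patterson--Sullivan measure to a flow-invariant measure requires the dual cocycle, a dual measure and a Gromov product, and the identification with the measure of maximal entropy of the reparametrized flow uses Margulis's uniqueness of the stable/unstable conditionals rather than Bowen--Ruelle alone; since you explicitly invoke the paper's correspondence this is available, but it is where the actual work lies.
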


We refer the reader to Corollary \ref{existe} and Proposition \ref{corr} for the proofs of the above statements.

The fact that both $\cal Q_{\t,\rho}$ and $\pi^\t_\rho\big(\cal Q_{\t,\rho}\big)$ are closed analytic hypersurfaces  can be found in Potrie-S. \cite[Proposition 4.11]{exponentecritico} for $\K=\R$ with essentially the same arguments. The parametrization of Patterson-Sullivan measures by $\pi^\t_\rho\big(\cal Q_{\t,\rho}\big)$ was previously stablished by Lee-Oh \cite[Theorem 1.3]{HoLee} for $\K=\R,$ $\t=\simple$ and assuming Zariski-density of $\rho(\G).$ Existence and ergodicity was previously stablished, for $\K=\R$ by Dey-Kapovich \cite[Main Theorem]{Dey-Kapovich} for $\ii$-invariant functionals $\varphi\in(\a^+)^*\cap(\a_\t)^*$ and $\ii$-invariant subsets $\t;$ and S. \cite[Corollary 4.22]{orbitalcounting} for arbitrary functionals but Zariski-dense representations of fundamental groups of negatively curved manifolds. Existence of Patterson-Sullivan measures has also been stablished by Canary-Zhang-Zimmer \cite{DAT} in the real case for \emph{relative} Anosov representations.

We keep the discussion for $\K=\R$ since this is essential in the following result. Consider $\varphi\in\cal Q_{\t,\rho}$ with associated Patterson-Sullivan measure $\mu^\varphi.$ Via the duality $$\grassman_{\dim\a_\t-1}\big((\a_\t)^*\big)\to\P(\a_\t),$$ the tangent space $\sf T_{\varphi}\cal Q_{\t,\rho}$ gives a direction $\sf u_\varphi$ of $\P(\a_\t)$ contained in the relative interior of the limit cone $\cone_{\t,\rho}$ (Corollary \ref{e=1}). We then further investigate the $\mu^\varphi$-mass of $\sf u_\varphi$-conical points on $\xi^\t(\bord\G).$ 

Since we are dealing with the limit cone on $\a_\t$ (and not on $\a$ as before) $\sf u_\varphi$-conical are points are yet to be defined. It is standard that every point $\xi^\t(x)\in\xi^\t(\bord\G)$ is conical\footnote{(This follows from the fact that every point $x\in\bord\G$ is conical and the existence of the equivariant limit maps for $\rho.$)}, let us say it is further $\sf u_\varphi$-\emph{conical} if there exists a conical sequence (for $x$) as above and $r>0$ such that $\cartan_\t(\rho(\g_n))\in\tube_r(\sf u_\varphi).$ Denote by $\con\varphi\subset\bord\G$ the subset $$\con\varphi=\big\{x\in\bord\G:\xi^\t(x) \textrm{ is $\sf u_\varphi$-conical}\big\}.$$

\begin{thmA}[Theorem \ref{thmB1}]\label{B}Let $\K=\R$ and assume $\rho$ is $\t$-Anosov and Zariski-dense. If $|\t|\leq2$ then $\mu^\varphi\big(\xi^\t(\con\varphi)\big)=1,$ if $|\t|\geq4$ then $\mu^\varphi\big(\xi^\t(\con\varphi)\big)=0.$
\end{thmA}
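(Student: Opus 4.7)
The plan is to lift the problem to the Mineyev flow space $\UG/\G$ and reduce the $\sf u_\varphi$-conical condition to a local-limit-theorem question for a centered cocycle taking values in a Euclidean space of dimension $|\t|-1$.

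Using the Ledrappier correspondence established in the first part of the paper together with Theorem \ref{A}, one constructs a Bowen--Margulis--Sullivan measure $\BMS^\varphi$ on $\UG/\G$ whose Hopf-parametrization conditionals are $\mu^\varphi$ on $\cal F_\t$ and a dual Patterson--Sullivan measure $\mu^{\varphi\circ\ii}$ on $\cal F_{\ii\t}$. The standard dictionary between conical limit points and recurrent geodesics yields: a point $x\in\bord\G$ is conical iff a $\BMS^\varphi$-generic geodesic with forward endpoint $x$ returns to a fixed compact subset of $\UG/\G$ infinitely often. To refine this to $\sf u_\varphi$-conicality, one decomposes the Cartan projection along a return-sequence $\g_n\to x$ as $\cartan_\t(\rho(\g_n)) = \varphi\big(\cartan_\t(\rho(\g_n))\big)\,\sf u_\varphi + v_n$, where $v_n$ takes values in the hyperplane $\sf H = \ker\varphi\subset\a_\t$ of dimension $|\t|-1$. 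The $\t$-Anosov property forces $\varphi(\cartan_\t(\rho(\g_n)))$ to grow linearly and comparably to the flow-time, while the tube condition $\cartan_\t(\rho(\g_n))\in\tube_r(\sf u_\varphi)$ becomes $\|v_n\|\leq r'$, so that $\sf u_\varphi$-conicality amounts to recurrence of the lateral cocycle $v_{\cdot}$ to a bounded region of $\sf H$.

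The key input is that $\sf u_\varphi$ is dual to $\sf T_\varphi\cal Q_{\t,\rho}$, which forces the lateral cocycle to be centered under $\BMS^\varphi$. A Benoist--Quint-type central limit theorem along the metric-Anosov Mineyev flow (transferred via the first-part correspondence), combined with the non-degeneracy consequences of Zariski-density, should yield a non-degenerate Gaussian limit law for $v_n/\sqrt{n}$ on $\sf H$, together with a local CLT giving that the probability of $v_n$ lying in a fixed bounded region decays as $n^{-(|\t|-1)/2}$. A Borel--Cantelli argument, combined with the ergodicity of $\BMS^\varphi$ provided by Theorem \ref{A}, then produces the dichotomy: for $|\t|\leq 2$ the series $\sum_n n^{-(|\t|-1)/2}$ diverges and ergodicity upgrades this to $\mu^\varphi\big(\xi^\t(\con\varphi)\big)=1$; for $|\t|\geq 4$ the series converges and the easy half of Borel--Cantelli gives $\mu^\varphi\big(\xi^\t(\con\varphi)\big)=0$.

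The main obstacle is twofold: establishing a local CLT (rather than the plain CLT) along the Mineyev flow in the generality of $\t$-Anosov representations requires a delicate spectral-gap analysis for a family of transfer operators built from the $\t$-Busemann cocycle; and the non-degeneracy step demands ruling out that the lateral cocycle be cohomologous to one with values in a proper subspace of $\sf H$, which is precisely where Zariski-density is used. The borderline case $|\t|=3$ corresponds to the critical planar dimension at which non-degenerate random walks are only null-recurrent and the Borel--Cantelli argument becomes inconclusive, accounting for its exclusion from the statement.
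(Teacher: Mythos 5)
Your overall architecture matches the paper's: reduce $\sf u_\varphi$-conicality to recurrence of a lateral cocycle valued in $\ker\varphi\simeq\R^{|\t|-1}$ sitting over the (metric-Anosov) refraction flow, prove polynomial decay of correlations of order $t^{-(|\t|-1)/2}$ by transfer-operator/Fourier analysis, and conclude the null case $|\t|\geq4$ by the convergent half of Borel--Cantelli. That half of your argument is essentially the paper's (Theorem \ref{mixing0}, Lemma \ref{dico}, Corollary \ref{dicoflujos}), and the reduction of conicality to recurrence of the directional flow is the content of Proposition \ref{recu-conico} --- a step you assert as a ``standard dictionary'' but which in fact requires the Cartan-basin estimates, Quint's lemma and the shadow comparison \eqref{comparision-shadow}; it is one of the main technical lemmas, not a formality.

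The genuine gap is in the case $|\t|\leq2$. You propose to deduce full measure of conical points from divergence of $\sum_n n^{-(|\t|-1)/2}$ via Borel--Cantelli, ``upgraded'' by the ergodicity of $\BMS^\varphi$ from Theorem \ref{A}. Two problems. First, the divergent half of Borel--Cantelli requires (quasi-)independence of the return events, which the local CLT alone does not provide; if divergence of the series sufficed, your argument would equally settle $|\t|=3$ (where $\dim\ker\varphi=2$ and $\sum n^{-1}$ diverges), contradicting the fact that this case is genuinely open. Second, and more fundamentally, the ergodicity furnished by Theorem \ref{A} is that of the Patterson--Sullivan measure on the base; what is needed is ergodicity of the \emph{skew-product} $\df^\varphi$ with its $(|\t|-1)$-dimensional fiber with respect to the infinite measure $\BM^\varphi$, which is strictly stronger and is exactly the content of Theorem \ref{dicoAnosov}. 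The paper obtains it, for $\dim\ker\varphi=1$, not by Borel--Cantelli but by the Atkinson--Schmidt conservativity of mean-zero real-valued cocycles combined with Coelho's theorem (recurrence plus non-arithmeticity implies ergodicity of the skew-product over a subshift); the case $\dim\ker\varphi=0$ is immediate from mixing. Once ergodicity of the skew-product is in hand, the Halmos recurrence theorem gives that $\cal K(\df^\varphi)$ has full measure, and the product structure of $\BM^\varphi$ transports this to $\mu^\varphi(\con\varphi)=1$. Your diagnosis of the exclusion of $|\t|=3$ (null recurrence making Borel--Cantelli inconclusive) is likewise off: the obstruction is the absence of an ergodicity theorem for $\R^2$-valued skew-products over metric-Anosov flows, i.e.\ the missing two-dimensional analogue of the Schmidt--Coelho mechanism.
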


The case $|\t|=3$ is sadly presently untreated, the missing fact that would make our technique directly apply is an ergodicity result for translation skew-products over metric-Anosov flows where the abelian group is isomorphic to $\R^2=\R^{|\t|-1},$ more precisely, we need equivalence between ergodicity and $\dim V\leq2$ in Corollary \ref{dicoflujos}.


When $\t=\simple,$ a stronger version of Theorem \ref{B} dealing also with the case $|\simple|=3$ was previously established by Burger-Landesberg-Lee-Oh \cite[Theorem 1.6]{BLLH}. It is likely that the combination of their techniques and ours settles the missing $|\t|=3$ case.

\subsection{General strategy for Theorem \ref{B}} Let us briefly explain the proof of Theorem \ref{B}, which we believe is the main contribution of this work. The main ingredient is a precise description of the \emph{$\t$-parallel sets dynamics} of $\sf G.$ If $(x,y)\in\cal F_{\ii\t}\times\cal F_\t$ are in general position, the associated \emph{parallel set} is a subset of $\sf X $ consisting on the union of totally geodesic maximal flats $\sf p$ of $\sf X$ whose associated complete flags in the Furstenberg boundary $\sf p(-\a^+)$ and $\sf p(\a^+)$ contain, respectively, $x$ and $y$ as a partial flag. This parallel set is a reductive symmetric space, and the associated dynamical system consists on moving along its center.

More concisely, if one considers the space $\posgen_\t\subset\cal F_{\ii\t}\times\cal F_\t$ of transverse flags, then the space $\posgen_\t\times\a_\t$ carries a $\sf G$-action (on the left) given by $$g(x,y,v)=\big(gx,gy,v-\bus_\t(g,y)\big),$$ and an $\a_\t$-action (on the right) by translation on the last coordinate.

Observe however that the left-action of $\rho(\G)$ on $\posgen_\t\times\a_\t$ need not be proper. For $\t$-Anosov groups though one finds an $\a_\t$-invariant subset which is also $\rho(\G)$-invariant and on which this latter action is proper (\S\,\ref{enderezarV1} and \S\,\ref{beta<infty}). Its quotient by $\rho(\G)$ will be denoted, throughout this introduction, by $\cal O_{\t,\rho}.$ 

For each $\varphi\in\cal Q_{\t,\rho}$ the space $\cal O_{\t,\rho}$ will carry a $\varphi$-\emph{Bowen-Margulis measure} $\BM^\varphi$ invariant under the \emph{directional flow} $\df^\varphi:\cal O_{\t,\rho}\to\cal O_{\t,\rho}$ along $u_\varphi\in\sf u_\varphi,$ defined by (the induced on the quotient by $\rho(\G)$ of) $$(x,y,v)\mapsto(x,y,v-tu_\varphi).$$ The idea generalizing S. \cite{orbitalcounting} is that $\df^\varphi$ is conjugated to a skew-product over a metric-Anosov flow $\phi^\varphi=\big(\phi_t^\varphi:\RR\varphi\to\RR\varphi\big)_{t\in\R}$ on a compact metric space $\RR\varphi.$ This is stablished in \S\,\ref{fibered2} and previously stablished by Carvajales \cite[Appendix]{lyon2} when $\t=\simple.$ 

\begin{obs*}The flow $\phi^\varphi$ plays a central role in this work. We propose to name it \emph{the $\varphi$-refraction flow} of $\rho,$ because one may think as if the projection on the base $\RR\varphi$ \emph{refracts} the orbits of $\df^\varphi$ (almost all of them wondering\footnote{in spite of being topologically mixing, these flows are wondering in a measureable sense i.e. almost every point belongs to a subset of positive measure with bounded return times,} when $|\t|\geq4$) in order to bind them in a compact space $\RR\varphi$ and obtain non-trivial dynamical behavior. On the other hand, the term \emph{geodesic flow} has too many meanings on this setting (the geodesic flow of $\G,$ the geodesic flow of the locally symmetric space $\rho(\G)\/\sf X,$ the geodesic flow of a projective-Anosov representation associated to $\rho$ by Plucker embedings...).\end{obs*}

An ergodicity result for skew-products over metric-Anosov flows (see \S\,\ref{guivarch}) gives an \emph{ergodic vs totally dissipative} dichotomy for $\df^\varphi$ according to $|\t|\leq2$ or $|\t|\geq4,$ here the base field $\K=\R$ and Zariski-density of $\rho$ are essential, since Benoist's \cite{benoist2} density of Jordan projections does not hold for non-Archimedean $\K.$ This dichotomy is reminiscent of Sullivan's \cite{sullivan} \emph{conservative vs totally dissipative} dichotomy in rank 1. Observe again the untreated case $|\t|=3.$

These dynamical properties of $\df^\varphi$ imply the following. The set $\cal K(\df^\varphi)$ of points in $\cal O_{\t,\rho}$ whose future orbit returns unboundedly to some open bounded set, has either zero $\BM^\varphi$-mass if $|\t|\geq4$ or its complement has zero $\BM^\varphi$-mass if $|\t|\leq2.$

The key feature now is to relate $\sf u_\varphi$-conical points with the set $\cal K(\df^\varphi),$ this is attained in Lemma \ref{thmB1} where it is shown that a triple $(x,y,v)\in\posgen_\t\times\a_\t$ projects to $\cal K(\df^\varphi)$ \emph{if and only if} $y$ is $\sf u_\varphi$-conical. The previous dynamical dichotomy gives then the dichotomy on the $\mu^\varphi$-measure on conical points:\begin{equation}\label{dico1}|\t|\leq2\Rightarrow\mu^\varphi(\con\varphi)=1,\qquad |\t|\geq4\Rightarrow\mu^\varphi(\con\varphi)=0.\end{equation}

The global strategy of our proof is different from the analog result in Burger-Landesberg-Lee-Oh \cite{BLLH}. While, inspired by them, we also use a mixing result, the use of Dirichlet-Poincar\'e series along tubes does not play any role in the proof of Theorem \ref{B}, nor on the ergodicity dichotomy for directional flows.

Let us end this Introduction be observing that both Burger-Landesberg-Lee-Oh \cite{BLLH} and Chow-Sarkar \cite{Chow-Sarkar} prove dynamical statements on $\rho(\G)\/\sf G$ (as opposed to $\rho(\G)\/\sf G/\sf M$).

\subsection{Plan of the paper} In \S\,\ref{dynamics} we recall some basic facts about the Ergodic Theory of metric-Anosov flows, and then study translation cocycles over them. Section \ref{piedrapie} deals with a Ledrappier correspondence for word-hyperbolic groups whose Gromov-Mineyev geodesic flow is metric-Anosov. We will mainly apply these results to the Buseman-Iwasawa cocycle of $\sf G,$ for applications to other cocycles the reader may check Carvajales \cite{lyon1,lyon2}.

We then recall in \S\,\ref{localfield} necessary definitions on semi-simple algebraic groups over a local field and deal with Anosov representations on \S\,\ref{AnosovReps}. We explain in this section how the Ledrappier correspondence applies in this setting to give, mainly:\begin{itemize}\item[-] uniqueness results on the Patterson-Sullivan measures,\item[-] precise dynamical information on the directional flows $\df^\varphi.$\end{itemize}
The proof of Theorem \ref{B} can be found in \S\,\ref{medidadico}.

\subsection{Acknowledgements} This paper grew from a question asked to the author by Hee Oh on the ergodicity of directional flows for Anosov representations (now Theorem \ref{dicoAnosov}). He would like to thank her for asking the question. He would also like to thank Marc Burger for encouraging him to pursue the ideas sketched on a very early version of this work, together with the remaining authors O. Landesberg, M. Lee and H. Oh of the paper \cite{BLLH}, whose results inspired Theorem \ref{B}.

\section{Skew-products over metric-Anosov flows}\label{dynamics}

Throughout this section we let $X$ be a compact metric space and $V$ a finite dimension real vector space.

\subsection{Thermodynamic Formalism and reparametrizations}\label{thermo1}

Let $\phi=(\phi_t:X \to X)_{t\in\R}$ be a continuous flow without fixed points. The space of $\phi$-invariant probability measures on $X$ is denoted by $\medidas^\phi.$ It is a  convex, weakly-compact subset of $C^*(X),$ the dual space to the space of continuous functions equipped with the uniform topology. The \emph{metric entropy} of $m\in\medidas^\phi$ will be denoted by $h(\phi,m),$ its definition can be found in Aaronson \cite{AaronsonLibro}. Via the variational principle, we will define the \emph{topological pressure} (or just \emph{pressure}) of a function $f:X\to\R$ as the quantity \begin{equation}\label{pressuredefi}P(\phi,f)= \sup_{m\in\medidas^{\phi}} \Big(h(\phi,m)+\int_X fdm\Big).\end{equation}  A probability measure $m$ realizing the least upper bound is called an \emph{equilibrium state} of $f.$ An equilibrium state for $f\equiv0$ is called a \emph{measure of
maximal entropy}, and its entropy is called the \emph{topological entropy} of $\phi,$ denoted by $h(\phi).$

Let $f:X\to\R_{>0}$ be continuous. For every $x\in X$ the function $k_f:X\times\R\to \R,$ defined by $k_f(x,t)=\int_0^tf(\phi_sx)ds,$ is an increasing homeomorphism of $\R.$ There is thus a continuous function $\alpha_f:X\times\R\to\R$ such that for all $(x,t)\in X\times\R,$

$$\alpha_f\big(x,k_f(x,t)\big)=k_f\big(x,\alpha_f(x,t)\big)=t.$$

\noindent
The \emph{reparametrization} of $\phi$ by $f:X\to\R_{>0}$ is the flow $\phi^f=(\phi^f_t:X \to X)_{t\in\R}$ defined, for all $(x,t)\in X\times\R$ by $$\phi^f_t(x)=\phi_{\alpha_f(x,t)}(x).$$ 

The \emph{Abramov transform} of $m\in\medidas^\phi$ is the probability measure $m^\#\in\medidas^{\phi^f}$ defined by \begin{equation}\label{abramovdef}m^\#=\frac{f\cdot m}{\int fdm}.\end{equation} One has the following:

\begin{lema1}[{S. \cite[Lemma 2.4]{quantitative}}]\label{Abramov} Let $f:X\to\R_{>0}$ be a continuous function. Assume the equation $$P(\phi,-sf)=0\qquad s\in\R$$ has a finite positive solution $h,$ then $h$ is the topological entropy of $\phi^f.$ Conversely if $h(\phi^f)$ is finite then it is a solution to the last equation. In this situation, the Abramov transform induces a bijection between the set of equilibrium states of $-hf$ and the set of probability measures maximizing entropy for $\phi^f.$
\end{lema1}

Two continuous maps $f,g:X\to V$ are \emph{Liv\v sic-cohomolo\-gous} if there exists a $U:X\to V,$ of class $\clase^1$ in the direction of the flow\footnote{i.e. such that if for every $x\in X,$ the map $t\mapsto U(\phi_tx)$ is of class $\clase^1,$ and the map $x\mapsto \left.\frac{\partial }{\partial t}\right|_{t=0}U(\phi_tx)$ is continuous}, such that for all $x\in X$ one has $$f(x)-g(x)=\left.\frac{\partial}{\partial t}\right|_{t=0} U(\phi_tx).$$

\begin{obs*}\label{LivP} If $f$ and $g$ are real-valued and Liv\v sic-cohomologous then $P(\phi,f)=P(\phi,g).$
\end{obs*}

\subsection{Metric-Anosov flows I: Liv\v sic-cohomology}\label{anosovtop}
\emph{Metric-Anosov flows} are a metric version of what is commonly known as \emph{hyperbolic flows}. The former are called Smale flows by Pollicott \cite{smaleflows}, who transferred to this more general setting the classical theory carried out for the latter. We recall here their definition and some well known facts on their Ergodic Theory needed in the sequel. Throughout this subsection we will further assume that $\phi$ is H\"older-continuous with an exponent independent of $t,$ that it is \emph{transitive}, i.e. it has a dense orbit, and that it is metric-Anosov.

For $\eps>0$ the \emph{local stable/unstable set} of $x$ are (respectively) \begin{alignat*}{2}W^\ss_\eps(x) & =\{y\in X:d(\phi_tx,\phi_t y)\leq\eps\ \forall t>0\textrm{ and }d(\phi_tx,\phi_t y)\to0\textrm{ as }t\to\infty\}\\ W^\uu_\eps(x) & =\{y\in X:d(\phi_{-t}x,\phi_{-t} y)\leq\eps\ \forall t>0\textrm{ and }d(\phi_{-t}x,\phi_{-t} y)\to0\textrm{ as }t\to\infty\}.\end{alignat*}

\begin{defi1}[Metric-Anosov]\label{defiMA}The flow $\phi$ is \emph{metric-Anosov} if the following holds:
\begin{itemize} \item[-] (Exponential decay) There exist positive constants $C,$ $\lambda$ and $\eps$ such that for every $x\in X,$ every $y\in W^\ss_\eps(x)$ and every $t>0$ one has $$d(\phi_tx,\phi_ty)\leq Ce^{-\lambda t},$$ and such that for every $y\in W^\uu_\eps(x)$ one has $d(\phi_{-t}x,\phi_{-t}y)\leq Ce^{-\lambda t}.$
\item[-] (Local product structure) There exist $\delta,\eps>0$ and a H\"older-continuous map $$\upnu:\{(x,y)\in X\times X:d(x,y)<\delta\}\to \R$$ such that $\upnu(x,y)$ is the unique value $\upnu$ such that $W^\uu_\eps(\phi_\upnu x)\cap W^\ss_\eps(y)$ is non-empty, and consists of exactly one point, called $\<x,y\>;$ and for every $x\in X$ the map $$W^\ss_\eps(x)\times W^\uu_\eps(x)\times(-\delta,\delta)\to X,$$ given by $(y,z,t)\mapsto\phi_t(\<y,z\>),$ is a H\"older-homeomorphism onto an open neighborhood of $x.$
\end{itemize}
\end{defi1}

A \emph{translation cocycle} over $\phi$ is a map $k: X\times\R \to V$ such that for every $x\in X$ and $t,s\in\R,$ one has $$k(x,t+s)=k(\phi_sx,t)+k(x,s),$$ and such that the map $k(\cdot,t)$ is H\"older-continuous with exponent independent of $t,$ and with bounded multiplicative constant when $t$ remains on a bounded set.  Two translation cocycles $k_1$ and $k_2$ are \emph{Liv\v sic-cohomologous}, if there exists a continuous map $U:X\to V,$ such that for all $x\in X$ and $t\in\R$ one has \begin{equation}\label{eq:algo}k_1(x,t)-k_2(x,t)=U(\phi_tx)-U(x).\end{equation}

If $k$ is a translation cocycle then the \emph{period for $k$} of a periodic orbit $\tau$ is  $$\ell_\tau(k)=k\big(x,p(\tau)\big),$$ for any $x\in\tau.$ The \emph{marked spectrum} $\tau\mapsto \ell_k(\tau)$ is a cohomological invariant that uniquely determines its class:

\begin{thm1}[Liv\v sic \cite{livsic}]\label{livsic3} Let $k:X\times \R\to V$ be a translation cocycle. If $\ell_k(\tau)=0$ for every periodic orbit $\tau,$ then $k$ is Liv\v sic-cohomologous to $0.$\end{thm1}

Observe that if $f:X\to V$ is H\"older-continuous then the map $$k_f(x,t)=\int_0^tf(\phi_sx)ds$$ is a translation cocycle. Two such functions are Liv\v sic-cohomologous if and only if the associated cocycles are, and the \emph{period} of $f$ on $\tau$ is, for any $x\in\tau,$ $$\ell_\tau(f)=\int_\tau f=k_f\big(x,p(\tau)\big).$$ It turns out that every cocycle is Liv\v sic-cohomologous to a cocycle of the form $k_f$:

\begin{cor1}[{S. \cite[Lemma 2.6]{orbitalcounting}}]\label{graciadivina} If $k:X\times\R\to V$ is a translation cocycle then there exists a H\"older-continuous $f:X\to V$ such that $ k$ and $k_f$ are Liv\v sic-cohomologous.
\end{cor1}

\begin{proof} For any $\kappa>0,$  the function $j(x,t)=\frac1{2\kappa}\int_{-\kappa}^\kappa k(x,t+s)ds$ is differentiable on the second variable, let $f(x)=\big(\partial/\partial s\big)|_{s=0}j(x,s).$ Then \begin{alignat*}{2}k_f(x,t)&=\int_0^tf(\phi_ux)du =\int_0^t\frac{\partial}{\partial s}\Big|_{s=0}j(\phi_ux,s)du\\ & =\int_0^t\frac{\partial}{\partial s}\Big|_{s=0}j(x,s+u)du = j(x,t)-j(x,0),\end{alignat*} so the period $k_f(x,p(\tau))=j(x,p(\tau))-j(x,0)=k(x,p(\tau)).$ By Theorem \ref{livsic3} the cocycles $k$ and $k_f$ are thus Liv\v sic-cohomologous.\end{proof}

We record also the following immediate consequence of Liv\v sic's Theorem:

\begin{obs*}The space of functions Liv\v sic-cohomologous to a strictly positive function is an (open cone on an) infinite dimensional space.
\end{obs*}

In this context much more information can be stated about the pressure function. Recall that the space $\holder^\alpha(X)$ of real valued $\alpha$-H\"older functions is naturally a Banach space when equipped with the norm $$\|f\|_\alpha=\|f\|_\infty+\sup_{x\neq y}\frac{|f(x)-f(y)|}{d(x,y)^\alpha}.$$

\begin{prop1}[{Bowen-Ruelle \cite{bowenruelle} and Parry-Pollicott \cite[Prop. 4.10]{parrypollicott}}]\label{pder} The function $P(\phi,\cdot)$ is analytic on $\holder^\alpha(X).$ If $f,g\in\holder^\alpha(X),$ then $$\frac{\partial}{\partial t}\Big|_{t=0}P(\phi,f+tg)=\int gdm_f,$$ where $m_f$ is the equilibrium state of $f,$ and the funcion $t\mapsto P(\phi,f+tg)$ is strictly convex unless $g$ is Liv\v sic-cohomologous to a constant. Finally, one also has \begin{equation}\label{PrPer}P(\phi,f)=\limsup_{t\to\infty}\frac1t\log\sum_{\tau:p(\tau)\leq t}e^{\ell_\tau(f)}.\end{equation} \end{prop1}

Let $f\in\holder^\alpha(X)$ have non-negative (and not all vanishing) periods and define its \emph{entropy} by $$\h_f=\limsup_{s\to\infty}\frac1s\log\#\big\{\tau\textrm{ periodic}:\int_\tau f\leq s\big\}\in(0,\infty].$$ 

\begin{obs*}\label{h>0}Observe that $\h_f$ is necessarily $>0$ since $f$ must have a positive maximum and $h(\phi)>0.$\end{obs*}

One has the following lemma.

\begin{lema1}[{Ledrappier \cite[Lemma 1]{ledrappier}+S. \cite[Lemma 3.8]{quantitative}}]\label{positiva}
 Consider a H\"older-continuous function $f:X\to\R$ with non-negative periods. Then the following statements are equivalent: \begin{itemize}\item[-] the function $f$ is Liv\v sic-cohomologous to a positive H\"older-continuous function,  \item[-] there exists $\kappa>0$ such that $\int_\tau f>\kappa p(\tau)$ for every periodic orbit $\tau,$ \item[-] the entropy $\h_f$ is finite,\item[-]the function $t\mapsto P(\phi, -tf)$ has a positive zero, in which case is $\h_f.$\end{itemize}
\end{lema1}

Let us fix an exponent $\alpha$ and consider the cone $\holder^\alpha_+(X,\R)$ of H\"older-continuous functions that are Liv\v sic-cohomologous to a strictly positive function. The implicit function theorem for Banach spaces (see Akerkar \cite{akerkar}) and the explicit formula for the derivative of pressure (Proposition \ref{pder}) give the following corollary.

\begin{lema1}\label{entropyAn} The entropy map $\h:\holder^\alpha_+(X,\R)\to\R_+$ is analytic.
\end{lema1}

\begin{proof}Indeed Lemma \ref{positiva} gives the equation $P(\phi,-\h_ff)=0$ and equation (\ref{pder}) gives that the non-vanishing derivative $$d_{-\h_ff}P(\phi,f)=\int fdm_{-h_ff}>0,$$  so the implicit function completes de result.\end{proof}

\subsection{Metric-Anosov flows II: Ergodic Theory}\label{defi:markovpatition}

A fundamental tool for studying the Ergodic Theory of metric-Anosov systems is the existence of a Markov coding.

Let $\EE$ be an irreducible sub-shift of finite type equipped with its shift transformation $\sigma:\EE\to\EE,$ and $r:\EE\to\R_{>0}$ be H\"older-continuous. Let $\hat r:\EE\times\R\to\EE\times\R$ be defined as $$\hat r(x,s)=(\sigma x, s-r(x)),$$ and consider the quotient space $\susp_r=\EE\times\R/\<\hat r\>.$ It is equipped with the flow $\upsigma^r=\big(\upsigma^r_t:\susp_r\to\susp_r\big)_{t\in\R}$ induced on the quotient by the translation flow.

\begin{defi1}[Markov coding] A triplet $(\EE,\pi,r)$ is a \emph{Markov coding} for $\phi$ if $\EE$ and $r$ are as above, $\pi:\EE\to X$ is H\"older-continuous and the function $\pi_r:\EE\times\R\to X$ defined as $$\pi_r(x,t)=\phi_t\pi(x)$$ verifies the following conditions: 
\begin{itemize}\item[i)] $\pi_r$ is H\"older-continuous, surjective and $\hat r$-invariant; it passes then to the quotient $\susp_r,$\item[ii)] $\pi_r:\susp_r\to X$ is bounded-to-one and injective on a residual set which is of full measure for every ergodic $\upsigma^r$-invariant measure of total support, \item[iii)] for every $t\in\R$ one has $\pi_r\upsigma^r_t=\phi_t\pi_r.$
\end{itemize}
\end{defi1}

The following result has a long history, see for example Bowen \cite{bowen1,bowen2}, Ratner \cite{ratner2}, Pollicott \cite{smaleflows} and more recently Constantine-Lafont-Thompson \cite{Lafont}.

\begin{thm1}[Existence of coding]\label{particiones} A transitive metric-Anosov flow admits a Markov coding.
\end{thm1}

The above is a fundamental tool to obtain the following, see for example Bowen-Ruelle \cite{bowenruelle}, Parry-Pollicott \cite{parrypollicott} and more recently Giulietti-Kloeckner-Lopes-Marcon \cite{GKLM}, recall from \S\,\ref{thermo1} the definition of equilibrium state.

\begin{thm1}[Uniqueness of equilibrium states]\label{equilibrios}Let $f:X\to\R$ be H\"older-continuous, then there exists a unique equilibrium state for $f,$ denoted by $m_f;$ it is an ergodic measure. If $g:X\to\R$ is also H\"older, then $m_g\ll m_f$ if and only if $f-g$ is Liv\v sic-cohomologous to a constant function, in which case $m_g=m_f.$ The function $f\mapsto m_f,$ defined on the space of H\"older-continuous functions with fixed exponent, is analytic.\footnote{We emphasize that the space of measures is endowed with the differentiable structure induced by being the dual space of continuous functions.} \end{thm1}


A final fact we will require on this setting (introduced by Margulis \cite{margulistesis}) is the decomposition  of the measure of maximal entropy along the stable/ central-unstable sets of $\phi.$

The \emph{stable/unstable leaf} of $x$ is \begin{alignat*}{2} W^\ss(x) & = \bigcup_{t\in\R_+} \phi_{-t}\big(W^\ss_\eps(\phi_tx)\big)\\ W^\uu(x) & = \bigcup_{t\in\R_+} \phi_{t}\big(W^\uu_\eps(\phi_{-t}x)\big),\end{alignat*} and the \emph{central} stable/unstable leaf is (respectively) the $\phi$-orbit of $W^\ss(x)$ (resp. $W^\uu(x)$). These sets are independent of (any small enough) $\eps$ (i.e. smaller than the $\eps$ given by Definition \ref{defiMA}).

One has the following, see for example Margulis \cite{MargulisMedida}, Pollicott \cite{Pollicott-MM} for a construction via Markov codings or Katok-Hasselblatt's book \cite[\S\,5 of Chapter 20]{katokh} for the discrete-time case.

\begin{thm1}[Margulis description]\label{mm}For each $x\in X$ there exists a measure $\mu_x^{\ss}$ on the stable leaf $W^\ss(x)$ and a measure $\mu_x^{\cu}$ on the central unstable leaf such that\begin{itemize}\item[-] for all $t>0$ and all measurable $U\subset W^{\ss}(x)$ one has\begin{equation}\label{contraccion}\mu_{\phi_tx}^\ss(\phi_tU)=e^{-h(\phi)t}\mu_{x}^\ss(U),
\end{equation}\item[-] the local product structure on Definition \ref{anosovtop} induces a local isomorphism between the measure $\mu_x^\ss\otimes\mu^\cu_x$ and the measure of maximal entropy of $\phi.$
\end{itemize}The family of measures is unique in the following sense. If $\nu_x^\ss$ and $\nu_x^\cu$ are also a family of measures along the stable and central-unstable leafs of $x$ such that the product $\nu_x^\ss\otimes\nu_x^\cu$ is locally isomorphic to a $\phi$-invariant measure and $\nu^\ss_x$ verifies equation (\ref{contraccion}) with some arbitrary fixed $\delta>0$ (instead of $h(\phi)$) then $\delta=h(\phi),$ $\nu^\ss_x=\mu^\ss_x$ and $\mu^\cu_x=\nu^\cu_x.$
\end{thm1}

\subsection{Skew-products over sub-shifts}\label{erg1} Consider now the two-sided subshift $\EE$ and let $\sf K:\EE\to V$ be H\"older-continuous.

\begin{defi1}\label{nnA}We say that $\sf K$ is \emph{non-arithmetic} if the group spanned by the periods of $\sf K$ is dense in $V.$
\end{defi1}

The \emph{skew-product system} is defined by $f=f^{\sf K}:\EE\times V\to\EE\times V$  \begin{equation}\label{trsl}f(p,v)=\big(\sigma(p),v-\sf K(p)\big).\end{equation}  If $\nu$ is a $\sigma$-invariant probability measure on $\EE$ then the measure $\Upomega=\Upomega_\nu=\nu\otimes \Leb$ is $f$-invariant.

The following proposition seems to be well known but we haven't been able to find a specific reference in the literature, for completeness we added a short proof Appendix \ref{dim1}.

\begin{prop1}\label{d=1} Let $\EE$ be a two-sided sub-shift, $\nu$ be an equilibrium state of some H\"older potential, and $\sf K:\EE\to\R$ a non-arithmetic H\"older-continuous function with $\int \sf Kd\nu=0.$ Then the skew-product $f^{\sf K}:\EE\times\R\to\EE\times\R$ is ergodic w.r.t. $\Upomega_\nu.$
\end{prop1}

We record also the following classical lemma. Let us say that a subset of $\EE\times V$ is \emph{bounded} if it has compact closure, and that it has \emph{total mass} (w.r.t. $\Upomega$) if its complement has measure zero. As the space $\EE\times V$ is non-compact, it is natural to study the subset of points of $\EE\times V$ whose future orbit returns infinitely many times to a fixed open bounded set: $$\cal K(f)=\big\{p\in \EE\times V: \exists \BB\textrm{ open bounded set and $n_k\to\infty$ with }f^{n_k}(p)\in\BB\big\}.$$ One can be more specific. If $\BB_1,\BB_2\subset\EE\times W$ we want to understand the measure of $$\cal K(\BB_1,\BB_2)=\big\{p\in\BB_1:\,\exists n_k\to\infty\textrm{ with $f^{n_k}(p)\in\BB_2$}\big\},$$ to this end one considers the sum $\sum_0^\infty\Upomega\big(\1_{\BB_1}\cdot\1_{\BB_2}\circ f^n\big)$:

\begin{lema1}\label{dico} If ${\sum_{n=0}^\infty \Upomega\big(\1_{\BB_1}\cdot\1_{\BB_2}\circ f^n\big)<\infty}$ then $\Upomega\big(\cal K(\BB_1,\BB_2)\big)=0.$ On the other hand if $\nu$ has no atoms and $f$ is ergodic w.r.t. $\Upomega$ then $\cal K(f)$ has total mass and for every pair $\BB_1,\BB_2$ one has $\cal K(\BB_1,\BB_2)$ has total mass on $\BB_1.$
\end{lema1}

\begin{proof}This is a standard argument valid for any measure preserving transformation. The first assertion follows by looking at the tail of the series in question $$\sum_{n=k}^\infty \Upomega\big(\1_{\BB_1}\cdot\1_{\BB_2}\circ f^n\big)\geq\Upomega(E_k),$$ where, for each $k\in\N,$ $E_k =\{p\in\BB_1:\,\exists N\geq k\textrm{ with }f^N(p)\in\BB_2\big\}.$ The second assertion can be found in, for example, Aaronson's book \cite[page 22]{AaronsonLibro}.\end{proof}

\subsection{An ergodic dichotomy}\label{guivarch}

As in \S\,\ref{defi:markovpatition}, let $\upsigma^r$ be the suspension of the shift on $\EE$ by the function $r.$ If $\nu$ is a $\sigma$-invariant probability measure, then $\nu\otimes dt/\int rd\nu$ is invariant under the translation flow $\EE\times\R$ and induces thus a $\upsigma^r$-invariant probability measure on $\susp_r,$ denoted by $\hat\nu.$ 

\begin{obs*}\label{equientro}It is a classical fact, to be found for example in Bowen-Ruelle \cite{bowenruelle}, that if $\nu$ is the equilibrium state of $-h(\upsigma^r)r $ then $\hat\nu$ realizes the topological entropy of $\upsigma_r.$
\end{obs*}

Let $K:\EE\times\R \to V$ be a $\<\hat r\>$-invariant H\"older-continuous function and consider the flow \begin{alignat*}{2}\psi=\big(\psi_t & :\susp_r \times V\to \susp_r \times V)_{t\in\R}\\  \psi_t\big(p,v\big) & =\big(\upsigma^r_t(p),v-\int_0^t K(\upsigma^r_sp)ds\big).\end{alignat*} 
\noindent

Consider the measure on $\Upsigma_r\times V$ defined by $\sus_\nu=\hat\nu\otimes\Leb.$

\begin{thm1}[{S. \cite[Theorem 3.8]{orbitalcounting}}]\label{mixing0} Assume the group generated by the periods of $(r,K)$ is dense in $\R\times V$ and that $\int Kd\hat\nu=0$ for the equilibrium state $\nu$ of $-h(\upsigma^r)r.$ Then there exists $\kappa>0$ such that given two compactly supported continuous functions $g_1,g_2:\Upsigma_r\times V\to\R,$ one has $$t^{\dim V/2}\sus_\nu\big(g_1\cdot g_2\circ\psi_t\big)\rightarrow \kappa \sus_\nu(g_1)\sus\nu(g_2),$$ as $t\to\infty.$
\end{thm1}

We include the main outline of its proof in Appendix \ref{mixingprueba}. As it is classical, the above result holds for characteristic functions of open bounded sets whose boundary has measure zero.

\begin{cor1} Under the same assumptions of Theorem \ref{mixing0}, if $\dim V=1$ then $\psi$ is ergodic w.r.t. $\sus_\nu.$ If $\dim V\geq 3$ then $\sus_\nu\big(\cal K(\psi)\big)=0.$
\end{cor1}

\begin{proof} The skew-product system $f^{\sf K}:\EE\times V\to \EE\times V$ of equation (\ref{trsl}), where $\sf K:\EE\to V$ is defined by \begin{equation}\label{sfK}\sf K(x)=\int_0^{r(x)}K(x,s)ds,\end{equation} is the first-return map of $\psi$ to its global section $\big(\EE\times\{0\}\big)/_\sim\times V.$ Consequently, following the flow-lines until reaching the section, one finds a natural (measurable) bijection between $\psi$-invariant subsets and $f$-invariant subsets. If $\mu$ is a $\sigma$-invariant probability measure on $\EE,$ then this bijection preserves the class of invariant-zero-sets between the measures $\hat\mu\otimes\Leb$ on $\susp_r\times V$ and $\mu\otimes\Leb$ on $\EE\times V.$ Thus we can translate ergodicity results from $f$ to the flow $\psi$ and vice-versa. 

The case $\dim V=1$ is hence settled by Proposition \ref{d=1}. 

For $\dim V\geq3$ one considers an open $A\subset\EE$ with $\nu(\partial A)=0,$ an open interval $I\subset\R$ with length $<\min r$ and $B\subset V$ an open ball. Applying Theorem \ref{mixing0} to $$\bar \BB=\bar\BB_1=\bar\BB_2=A\times I\times B$$ gives a positive $C$ such that for large $t$ one has $t^{\dim V/2}\sus\big(\1_{\bar\BB}\cdot\1_{\bar\BB}\circ\psi_t\big)\leq C.$ Thus, for a fixed $t_0>0$ one has that $$\int_{t_0}^\infty\sus\big(\1_{\bar\BB}\cdot\1_{\bar\BB}\circ\psi_t\big)dt\leq C\int_{t_0}^\infty \frac1{t^{\dim V/2}}.$$ 

If $\dim V\geq3$ then $\int_{t_0}^\infty\sus\big(\1_{\bar\BB}\cdot\1_{\bar\BB}\circ\psi_t\big)dt<\infty$ and Lemma \ref{dico} gives $\sus\big(\cal K(\psi)\big)=0,$ in particular the system is not ergodic.
\end{proof}

\begin{obs*} An ergodicity dichotomy for $f^{\sf K}$ has been previously established by Guivarc'h \cite[Corollaire 3 on page 443]{guivarch} under the stronger assumption that $\sf K$ is \emph{aperiodic}.
\end{obs*}

By means of Markov partitions (Theorem \ref{particiones}), the above corollary immediately translates in the following. As in the previous section, let $X$ be a compact metric space equipped with a topologically transitive, H\"older-continuous, metric-Anosov flow $\phi.$ Let $F:X\to V$ be H\"older-continuous and consider the flow $\Phi=\big(\Phi_t:X\times V\to X\times V\big)_{t\in\R}$ $$\Phi_t(p,v)=\big(\phi_tp,v-\int_0^s F(\phi_sp)ds\big).$$ It is convenient to call the flow $\Phi$ by \emph{the skew product of $\phi$ by $F$}.

\begin{cor1}[Dichotomy]\label{dicoflujos} Assume the group spanned by the periods of $(1,F)$ is dense in $\R\times V$ and that $\int Fdm=0$ for the measure of maximal entropy $m$ of $\phi.$ Then $\Phi$ is mixing as in Theorem \ref{mixing0}, moreover $$\dim V\leq1 \Rightarrow \Phi\textrm{ is ergodic w.r.t. }m\otimes\Leb\Rightarrow \dim V\leq2.$$ If $\dim V\geq3$ then $\cal K(\Phi)$ has zero measure.\end{cor1}

\begin{proof}Follows from the corresponding results for subshifts and Remark \ref{equientro} describing the measure of maximal entropy of $\phi.$
\end{proof}

\subsection{The critical hypersurface}\label{E=1} We recall here two results from Babillot-Ledra\-ppier \cite{babled}. Their paper concerns differentiable Anosov flows but, as one checks the proofs, only the existence of a Markov coding is required for both their results below. We take the liberty to state them in our broader generality and refer the reader to \emph{loc. cit.} whose proofs work verbatim. 

As before, let $F:X\to V$ be H\"older-continuous.

\begin{assu}\label{assuA}We will assume throughout the remainder of \S\,\ref{dynamics} that the closed group $\grupo$ spanned by the periods of $F$ has rank $\dim V,$ (i.e. $\grupo\simeq \R^k\times\Z^{\dim V-k}$ for some $k\in\lb0,\dim V\rb$) and that moreover the group spanned by $$\Big\{\big(p(\tau),\int_\tau F\big):\tau\textrm{ periodic}\Big\}$$ is isomorphic to $\R\times \grupo.$
\end{assu}

The compact convex subset of $V$ $$ \medidas^\phi(F)=\Big\{\int_XFd\mu:\mu\in\medidas^\phi\Big\}$$ has hence non-empty interior. On the other hand, for each $\varphi\in V^*$ one can consider the pressure of the function $\varphi(F):X\to\R$: $$\PP(\varphi)=P(\phi,-\varphi\circ F).$$ By Assumption \ref{assuA} the function $\PP:V^*\to\R$ is analytic and strictly convex (Proposition \ref{pder}). Using the formula for the derivative of pressure (Proposition \ref{pder}), and the natural identification $\grassman_{\dim V-1}(V^*)\to\P(V),$ one has, for $\varphi\in V^*$ that \begin{equation}\label{dpres}d_\varphi\PP=\int Fdm_{-\varphi(F)},\end{equation} where $m_{-\varphi(F)}$ is the equilibrium state of $-\varphi(F).$ One has the following.

\begin{prop1}[{Babillot-Ledrappier \cite[Prop. 1.1]{babled}}]\label{babled1} The map $\dual:V^*\to V$ defined by $\varphi\mapsto d_\varphi\PP$ is a diffeomorphism between $V^*$ and the interior of $\medidas^\phi(F).$\end{prop1}

Let us denote by $\cone_F=\R_+\cdot\medidas^\varphi(F)$ the closed cone generated by the periods of $F.$ If $0$ does not belong to $\medidas^\phi(F)$ then $\cone_F$ is a sharp cone (i.e. does not contain a hyperplane of $V$) and its interior is $$\inte\cone_F=\R_+\cdot\inte\big(\medidas^\phi(F)\big).$$ One has moreover the following.

\begin{prop1}[{Babillot-Ledrappier \cite[Prop. 3.1]{babled}}]\label{babled2} Assume $0\notin\medidas^\phi(F),$ then the set $\dual\big(\{\varphi\in V^*:\PP(\varphi)=0\}\big)$ generates the cone $\inte\cone_F.$ 
\end{prop1}

Observe that if $\varphi\in V^*$ is such that $\sum_{\tau\textrm{ periodic}}e^{-\ell_\tau(\varphi\circ F)}<\infty$ then the formula for pressure on Proposition \ref{pder} gives that $\PP(\varphi)\leq0.$ There exists then $s\in(0,1]$ such that $\PP(s\varphi)=0.$ The variational principle (equation (\ref{pressuredefi})) implies that $$\int_\tau\varphi(F)\geq0$$ for every periodic orbit $\tau,$ and thus Lemma \ref{positiva} applies to give that $\varphi(F)$ is Liv\v sic-cohomologous to a strictly positive function and $\h_{\varphi(F)}\in(0,1].$

Consequently $\varphi$ is strictly positive on the cone $\cone_F,$ i.e. $\varphi\in\inte\big(\cone_F^*\big).$ We are thus interested in the \emph{convergence domain} of $F$ \begin{alignat*}{2}\cal D_F & = \big\{\varphi\in V^*: \sum_{\tau\textrm{ periodic}} e^{-\ell_\tau(\varphi\circ F)}<\infty\big\}\\ & =\big\{\varphi\in\inte\big(\cone_F^*\big): \h_{\varphi(F)}\in(0,1]\big\}, \end{alignat*} and the \emph{critical hypersurface}\footnote{also usually called the \emph{entropy-one set} or \emph{the Manhattan curve},} (whose name is justified by the next corollary) \begin{alignat*}{2}\cal Q_F & =\big\{\varphi\in\inte\big(\cone_F^*\big): \h_{\varphi(F)}=1\big\}.\end{alignat*}

\begin{cor1}\label{E=1Cor} Assume $0\notin\medidas^\phi(F),$ then $\cal D_F=\PP^{-1}(-\infty,0)$ and $\cal Q_F=\PP^{-1}(0)$ Consequently $\cal D_F$ is a strictly convex set whose boundary coincides with $\cal Q_F.$ The latter is a closed analytic co-dimension-one submanifold of $V^*.$ The map $$\varphi\in\cal Q_F\mapsto \sf T_\varphi\cal Q_F$$ induces a diffeomorphism between $\cal Q_F$ and directions in the interior of the cone $\cone_F.$\end{cor1}

\begin{proof}We have already shown the inclusions $\PP^{-1}(-\infty,0)\subset\cal D_F$ and $\PP^{-1}(0)\subset\cal Q_F,$ the other ones follow at once from Lemmas \ref{positiva} and \ref{Abramov}. Since $0\notin\medidas^\varphi(F)$ Proposition \ref{babled1} implies that $\PP$ has no critical points, thus $\PP^{-1}(0)=\cal Q_F$ is an analytic sub-manifold of $V^*.$ Strict convexity follows from that of $\PP,$ and the last assertion follows by observing that the tangent space $\sf T_\varphi\cal Q_F$ equals $\ker d_\varphi\PP.$\end{proof}

We now focus on the variation of the critical hypersurface when $F$ varies. To this end consider the Banach space $\holder^\alpha(X,V)$ of $V$-valued H\"older continuous functions with exponent $\alpha.$ The pressure can be considered as an analytic map $\PP:\holder^\alpha(X,V)\times V^*\to\R$ defined as $\PP(G,\psi):=P(\phi,-\psi(G)).$ Its differential at the point $(F,\varphi)$ on the vector $(G,\psi)$ is $$d_{(F,\varphi)}\PP(G,\psi)=-\int\big(\psi(F)+\varphi(G)\big)dm_{-\varphi(F)},$$ and vanishes identically only if $(F,\varphi)=(0,0).$ The pre-image $\PP^{-1}(0)$ is thus a Banach-manifold. 

If $F\in\holder^\alpha(X,V)$ is such that $0\notin\medidas^\phi(F),$ then its critical hypersurface $$\cal Q_F=\big\{\varphi\in V^*:(F,\varphi)\in\PP^{-1}(0)\big\}$$ is the intersection of $\{F\}\times V^*$ with the level set $\PP^{-1}(0).$ This intersection will vary analytically on compact sets with $F$ as long as the tangent space $\ker d_{(F,\varphi)}\PP$ (for fixed $(F,\varphi)$ with $\PP(F,\varphi)=0$) is transverse to the vector space $\{0\}\times V^*.$ Since $\ker d_{(F,\varphi)}\PP$ has co-dimension 1, transversality is implied by $\ker d_{(F,\varphi)}\PP\cap\{0\}\times V^*$ being co-dimension $1$ on $V^*.$ However by Corollary \ref{E=1Cor}, this latter intersection is, as long as $F$ verifies assumption \ref{assuA} and $0\notin\medidas^\phi(F),$ the tangnet space $\sf T_\varphi\cal Q_F,$ which has co-dimension $1.$ We have thus established the following.

\begin{cor1}\label{Qanaly}The critical hypersurface $\cal Q_F$ varies analytically on compact sets when varying the function $F$ among H\"older functions verifying the hypothesis of Corollary \ref{E=1Cor}.
\end{cor1}

\subsection{Dynamical Intersection and the critical hypersurface}\label{DynIn} We recall here a concept from Bridgeman-Canary-Labourie-S.  \cite{pressure} (see also references therein for related concepts). Let $f:X\to\R_+$ be a positive H\"older-continuous function and, for $t>0,$ consider the finite set $\sf R_t(f)=\big\{\tau\textrm{ periodic}:\ell_\tau(f)\leq t\big\}.$ Let $g:X\to\R$ be H\"older-continuous (but not necessarily positive), then the \emph{dynamical intersection} between $f$ and $g$ is defined by $$\II(f,g)=\lim_{t\to\infty}\frac1{\#\sf R_t(f)}\sum_{\tau\in \sf R_t(f)}\frac{\ell_\tau(g)}{\ell_\tau(f)}.$$ Then one has the following.

\begin{prop1}[{\cite[\S\,3.4]{pressure}}]\label{ineq} One has $$\II(f,g)=\frac{\int gdm_{-\h_ff}}{\int fdm_{-\h_ff}},$$ in particular $\II$ is well defined and varies analytically with $f$ and $g$ among H\"older-continuous functions with fixed H\"older exponent. If $g$ is moreover positive then one has $\II(f,g)\geq \h_f/\h_g.$ \end{prop1}

We now place ourselves in the context of the previous subsection, i.e. we consider a H\"older-continuous $F:X\to V$ and we assume moreover that $0\notin\medidas^\phi(F).$ For $\varphi\in\cal Q_F$ we consider the map $\II_{\varphi}:V^*\to\R$ defined by \begin{equation}\label{IIQ}\II_{\varphi}(\psi):= \II(\varphi(F),\psi(F))=\frac{\int \psi(F)dm_{-\varphi(F)}}{\int\varphi(F)dm_{-\varphi(F)}},\end{equation} where the last equality comes from Proposition \ref{ineq} and the fact that $\h_{\varphi(F)}=1$. Observe that it is a linear map. We then have the following explicit interpretation of the tangent space to the critical hypersurface purely in terms of periods.

\begin{cor1}\label{TI}Let $F:X\to V$ be as in \S\,\ref{E=1} and such that $0\notin\medidas^\phi(F).$ Then for $\varphi\in\cal Q_F$ one has $\sf T_\varphi\cal Q_F=\ker\II_\varphi.$
\end{cor1}

\begin{proof} Since by Corollary \ref{E=1Cor} one has $\cal Q_F=\PP^{-1}(0),$ the tangent space $\sf T_\varphi\cal Q_F=\ker d_\varphi\PP=\ker\II_\varphi,$ where the last equality comes from the combination of Equations \eqref{dpres} and \eqref{IIQ}.\end{proof}

\section{A Ledrappier correspondence}\label{piedrapie}

In \cite{ledrappier} Ledrappier establishes, for a closed negatively-curved manifold $M,$ bijections between Liv\v sic-cohomology classes of \emph{pressure zero functions} on $\sf T^1M,$ \emph{normalized H\"older cocycles} for the action of $\pi_1M$ on the visual boundary $\bord_\infty\tilde M$ of the universal cover of $M,$ \emph{quasi-invariant measures} on $\bord_\infty\tilde M$, among other objects. The purpose of this section is to establish, in the context of word-hyperbolic groups with metric-Anosov geodesic flow, some of these correspondences. We also extend results from S. \cite{exponential,quantitative,orbitalcounting} to this setting. Some ideas from Bridgeman-Canary-Labourie-S. \cite{pressure,Liouvillepressure} and Carvajales \cite[Appendix]{lyon2} are used. The reader can also check Paulin-Pollicott-Schapira \cite{PPS} for similar results in situations allowing cusps.

Let $\G$ be a finitely generated, non-elementary, word-hyperbolic group (see Ghys-de la Harpe \cite{ghysharpe} for a definition). Denote by $\sf g=\big(\sf g_t:\sf U\G\to\sf U\G\big)_{t\in\R}$ the \emph{Gromov-Mineyev geodesic} flow of $\G$ (see Gromov \cite{gromov} and Mineyev \cite{mineyev}). The total space $\sf U\G$ is the quotient of $\bord^2\G\times\R$ by a properly discontinuous co-compact $\G$-action (defined in \emph{loc. cit.}). This action restricted to $\bord^2\G$ coincides with the induced $\G$-action on its Gromov boundary, and commutes with the $\R$-action by translations, giving on the quotient the desired flow $\sf g.$ We will save the notation $$\widetilde{\sf U\G}$$ for the pair consisting on the space $\bord^2\G\times\R$ equipped with the above $\G$-action.

\begin{assu}\label{assuB}We will assume throughout \S\,\ref{piedrapie} that $\sf g$ is metric-Anosov (recall that in general $\sf g$ is transitive (see Remark \ref{tran})) and that the lamination induced on the quotient by $\widetilde{\cal W}^{cu}=\{(x,\cdot,\cdot)\in\widetilde{\sf U\G}\}$ is the central-unstable lamination of $\sf g.$\end{assu}

Let us emphasize that, in what follows, the Gromov-Mineyev geodesic flow is merely an auxiliary object. The whole discussion works verbatim replacing $\sf g$ by the following flows known to satisfy Assumption \ref{assuB}: \begin{itemize}\item[-] the non-wandering set of the geodesic flow of a convex co-compact action of $\G$ on a $\mathrm{CAT}(-1)$ space (if this is known to exist), see Constantine-Lafont-Thompson \cite{Lafont}, \item[-] the geodesic flow of a projective-Anosov representation of $\rho$ (again if this is known to exist) as introduced in Bridgeman-Canary-Labourie-S. \cite{pressure}, see also \S\,\ref{flujoBCLS}.\end{itemize}

Recall that every \emph{hyperbolic element}\footnote{i.e. an infinite order element} $\g\in\G$ has two fixed points on $\bord\G,$ the attracting $\g_+$ and the repelling $\g_-.$ If $x\in\bord\G-\{\g_-\}$ then $\g^nx\to\g_+$ as $n\to\infty.$ The axis $(\g_-,\g_+)\times\R$ projects then to a periodic orbit of $\sf g,$ denoted by $[\g].$ If $l(\g)$ denotes the translation length of $\g$ along this axis, then $l(\g)$ is an integer multiple of the period of the periodic orbit $[\g].$ Observe we allow $[\g]$ to tour several times along the orbits it surjects to, so at least formally, we let $[\g^n]$ be the orbit $[\g]$ toured $n$-times.

\begin{obs*}\label{tran}We briefly justify why $\sf g$ is transitive. It suffices to show that given two open sets $U, V$ there exists $t\in\R$ s.t. $\sf g_t(U)\cap V\neq\emptyset,$ so the question is reduced to the same question for the action of $\G$ on $\bord^2\G$; the open sets to be considered can be reduced to be of the form $U_1\times U_2$ and $V_1\times V_2,$ where $U_i,V_i\subset\bord\G$ are open and $\overline{U_1}\cap U_2=\overline{V_1}\cap V_2=\emptyset;$ an element $\g\in\G$ with $\g_-\in U_2$ and $\g_+\in V_1$ has a positive power such that $\g^n(U_1\times U_2)\cap V_1\times V_2\neq\emptyset.$ 
\end{obs*}

\subsection{The Ledrappier potential of a H\"older cocycle}\label{LedrappierPot}

Let $V$ be a finite dimensional real vector space. A \emph{H\"older cocycle} is a function $c:\G\times\bord^2\G\to V$ such that: \begin{itemize}\item[-] for all $\g,h\in \G$ one has $c\big(\g h,(x,y)\big)=c\big(h,(x,y)\big)+c\big(\g,h(x,y)\big),$ \item[-]there exists $\alpha\in(0,1]$ such that for every $\g\in\G$ the map $c(\g,\cdot)$ is $\alpha$-H\"older continuous.\end{itemize}

The \emph{period} of a H\"older cocycle for a hyperbolic $\g\in\G$ is $\ell_c(\g):=c\big(\g,(\g^-,\g^+)\big).$ Two cocycles $c,c'$ are \emph{cohomologous} if there exists a H\"older-continuous function $U:\bord^2\G\to V$ such that for all $\g\in\G$ and $(x,y)\in\bord^2\G$ one has $$c(\g,(x,y))-c'(\g,(x,y))=U(\g(x,y))-U(x,y).$$ Two cohomolgous cocycles have the same marked spectrum $\g\mapsto\ell_c(\g).$ The following should be compared with Ledrappier \cite[Th\'eor\`eme 3]{ledrappier}.

\begin{prop1}\label{led1} For every H\"older cocycle $c$ there exists a H\"older-continuous function $\LL_c:\sf U\G\to V$ such that for every hyperbolic $\g\in\G$ one has $$\int_{[\g]}\LL_c=\ell_c(\g).$$ Cohomologous cocycles induce Liv\v sic-cohomolgous functions. 
\end{prop1}

\begin{proof} The general case follows from the case $V=\R$ by Riesz representation Theorem. Assume thus $V=\R$ and consider the trivial line bundle $\widetilde{\sf U\G}\times\R$ equipped with the bundle automorphisms $$\g\cdot(p,s):=\big(\g p,e^{-c\big(\g,(x,y)\big)}s\big),$$ where $p=(x,y,t).$ Denote by $\sf F\to \sf U\G$ the quotient line bundle. It is equipped with a flow $\big(\hat {\sf g}_t:\sf F\to\sf F\big)_{t\in\R}$ by bundle automorphisms, induced on the quotient by $$t\cdot(p,s)=(\sf g_tp,s).$$ Let $|\,|$ be a Euclidean metric on $\sf F$ and define, for $v\in\sf F_p,$ \begin{equation}\label{de}\cal T(p,t)=\log\frac{|\hat{\sf g}_tv|}{|v|}.\end{equation} It is a translation cocycle over $\sf g,$ indeed since $\sf F_p$ is one dimensional the choice of $v$ does not matter, and since $\hat{\sf g}$ is a flow one has $$\log\frac{|\hat{\sf g}_{t+s}(v)|}{|v|}= \log\frac{|\hat{\sf g}_{t}(\hat{\sf g}_sv)|}{|v|}\frac{|\hat{\sf g}_s(v)|}{|\hat{\sf g}_s(v)|}=\log\frac{|\hat {\sf g}_{t}(\hat{\sf g}_sv)|}{|\hat{\sf g}_s(v)|}+\log\frac{|\hat{\sf g}_s(v)|}{|v|}.$$
 
By Corollary \ref{graciadivina}, there exists a H\"older-continuous function $\LL_c:\sf U\G\to\R$ such that $\cal T$ and $k_{\LL_c}$ are Liv\v sic-cohomologous. We end the proof by a period computation. For every hyperbolic $\g\in\G$ one has, for all $s\in\R,$ that $\g(\g_-,\g_+,s)=(\g_-,\g_+,e^{-\ell_c(\g)}s),$ or equivalently, for any $x\in[\g]\subset\sf U \G$ and $v\in \sf F_x,$ $$\ell_c(\g)=\log\frac{|\hat{\sf g}_{p(\g)}v|}{|v|}=\ell_{[\g]}(\cal T)=\int_{[\g]}\LL_c,$$ where $p(\g)$ is the period of $[\g]$ for $\sf g.$ Since cohomologous cocycles have the same marked spectrum, the associated functions have the same periods and are thus Liv\v sic-cohomolgous by Theorem \ref{livsic3}.\end{proof}

\begin{defi1}We say that $\LL_c$ is \emph{a Ledrappier potential} of $c$ over $\sf g.$\end{defi1}

\subsection{Real cocycles and reparametrizations}\label{refraction}

Assume now $V=\R$ and consider a cocycle $c$ with non-negative (and not all vanishing) periods. Define its \emph{entropy} by $$\h_c=\limsup_{t\to\infty}\frac1t\log\#\big\{[\g]\in[\G]\textrm{ hyperbolic}:\ell_c(\g)\leq t\big\}.$$

\begin{obs*} It follows from Proposition \ref{led1} and Remark \ref{h>0} that $\h_c>0.$
\end{obs*}


For such a cocycle consider the action of $\G$ on $\bord^2\G\times\R$ via $c$: \begin{equation}\label{gaction}\g\cdot(x,y,t)=\Big(\g x,\g y,t-c\big(\g,(x,y)\big)\Big).\end{equation} Let us denote by $\RR c$ the quotient space $\RR c=\G\/\big(\bord^2\G\times\R\big).$ The following can be found in S. \cite{quantitative} for fundamental groups of closed negatively-curved manifolds and in Carvajales \cite{lyon2} for the refraction cocycle of a $\simple$-Anosov representation (see Definition \ref{refr}).

\begin{thm1}\label{refractionThm} If $c$ is a H\"older cocycle with non-negative periods and finite entropy, then its Ledrappier potential is Liv\v sic-cohomologous to a strictly positive function. Moreover, the above action of $\G$ on $\bord^2\G\times\R$ is properly-discontinuous and co-compact and the flow $\phi^c=\big(\phi^c_t:\RR c\to \RR c\big)_{t\in\R}$ induced on the quotient by the $\R$-translation flow is H\"older-conjugated to the reparametrization of $\sf g$ by $\LL_c.$
\end{thm1}

The topological entropy of $\phi^c$ is thus\footnote{When $\G$ has torsion elements this fact requires some work, see Carvajales-Dai-Pozzetti-Wienhard \cite[Section 5]{CDPW} for details.} $\h_c.$

\begin{proof} The first assertion follows at once from Lemma \ref{positiva}. For the remaining statements, we continue as in the proof of the proposition but for the cocycle $-c.$ Observe first that $-\LL_c=\LL_{-c}.$ Since $\cal T$ and $k_{\LL_{-c}}$ are Liv\v sic-cohomologous, there exists $U:\sf U\G\to\R$ such that for all $t\in\R,$ $p\in\sf U\G$ and $v\in\sf F_p$ one has (recall equation (\ref{de})) $$\log\frac{|\hat{\sf g}_tv|}{|v|}-\int_0^t\LL_{-c}=U(\sf g_tp)-U(x).$$

\noindent Since $\LL_{-c}=-\LL_c$ is Liv\v sic-cohomologous to a strictly negative function, the above equation implies that the flow $\hat{\sf g}$ is contracting on $\sf F,$ i.e. there exist positive $C$ and $\mu$ such that for all $v\in \sf F$ and $t\in \R$ one has $$|\hat{\sf g}_tv|\leq Ce^{-\mu t}|v|.$$ A standard procedure (see for example Katok-Hasselblat \cite{katokh} or Bridgeman-Canary-Labourie-S. \cite[Lemma 4.3]{pressure}) provides a Euclidean metric $\|\,\|$ on $\sf F$ such that the constant $C$ equals $1.$ We denote also by $\|\,\|$ the lift of this metric to $\widetilde{\sf U\G}\times \R,$ it is a $\G$-invariant family.

Given then $(x,y,t)\in\widetilde{\sf U\G}$ we let $v_{(x,y,t)}\in(\R-\{0\})/\pm$ be such that $\|v_{(x,y,t)}\|=1.$ As in \cite[Proposition 4.2]{pressure} the map \begin{alignat}{3} \label{mapa} \widetilde{\sf U \G} & \to \bord^{2}\G\times (\R-\{0\}/\pm)  \to  \bord^{2}\G\times\R & \nonumber\\ \U: (x,y,t) & \mapsto \big(x,y,v_{(x,y,t)}\big)  \mapsto \big(x,y,\log v_{(x,y,t)}\big), & \end{alignat} is $\G$-equivariant and an orbit equivalence between de $\R$-actions. \end{proof}

\begin{defi1} The flow $\phi^c$ will be called the \emph{refraction flow} of $c.$
\end{defi1}

\subsection{Patterson-Sullivan measures}\label{PattersonC}

Let us consider now H\"older cocycles with $V=\R$ and only depending on one variable, i.e. $c:\G\times\bord\G\to\R.$ Assume moreover that $c$ has non-negative periods and finite entropy. A cocycle $\bar c:\G\times\bord\G\to\R$ is \emph{dual to $c$} if for every hyperbolic $\g\in\G$ one has $$\ell_{\bar c}(\g)=\ell_c\big(\g^{-1}\big).$$

\begin{defi1}\item\begin{itemize}\item[-]A \emph{Patterson-Sullivan measure for $c$ of exponent} $\delta\in\R_+$ is a probability measure $\mu$ on $\bord\G$ such that for every $\g\in\G$ one has \begin{equation}\label{defP}\frac{d \g_*\mu}{d\mu}(\cdot)=e^{-\delta c\big(\g^{-1},\cdot\big)}.\end{equation} \item[-]A \emph{Gromov product} for the ordered pair $(\bar c, c)$ is a function $[\cdot,\cdot]:\bord^2\G\to\R$ such that for all $\g\in\G$ and $(x,y)\in\bord^2\G$ one has $$[\g x,\g y]-[x,y]=-\big(\bar c(\g,x)+c(\g,y)\big).$$\end{itemize}
\end{defi1}

Consider a pair of dual cocycles $(\bar c,c)$ and assume a Patterson-Sullivan measure of exponent $\delta$ exists for each $c$ and $\bar c,$ denoted by $\mu$ and $\bar \mu$ respectively. Assume moreover that a Gromov product for the pair $(\bar c, c)$ exists. The measure \begin{equation}\label{tildem}\widetilde{\scr m}= e^{-\delta[\cdot,\cdot]}\bar\mu\otimes \mu\otimes dt\end{equation}

\noindent
on $\bord^2\G\times \R$ is hence $\G$-invariant\footnote{the action being via $c,$ $\g(x,y,t)=(\g x,\g y,-c(\g,y))$} and $\R$-invariant. Passing to the quotient one obtains a measure $\scr m$ on $\RR c$ invariant under the flow $\phi^c.$ Observe that we can write $\widetilde{\scr m}$ as $$d\widetilde{\scr m}(x,y,t)=\int_{\bord\G\times\R}e^{-\delta t}d\mu(y) dt\left(\int_{\bord\G} e^{\delta t}e^{-\delta[x,y]}d\bar\mu(x)\right).$$ The measure $e^{-\delta t}d\mu dt$ is $\G$-invariant\footnote{Indeed, if $f:\bord\G\times\R\to\R$ is continuous then \begin{alignat*}{3}\int f\big(\g(y,t)\big)e^{-\delta t} d\mu dt,&=\int f\big(\g y,t-c(\g,y)\big)e^{-\delta t} d\mu dt & \\ & =\int f(\g y,t)e^{-\delta \big(t-c(\g,y)\big)} d\mu dt & \qquad\textrm{(by translation invariance)}\\ & =\int f( y,t)e^{-\delta \big(t-c(\g,\g^{-1}y)\big)}e^{-\delta c(\g^{-1},y)} d\mu dt  & \qquad\textrm{(by definition of $\mu$)}\\ & =\int f(y,t)e^{-\delta t}d\mu dt.& \end{alignat*}} on $\bord\G\times\R$ and the family $$m_{(y,t)}=e^{\delta t}e^{-\delta[\cdot,y]}d\bar\mu$$ is $\G$-equivariant. This decomposition induces then a local decomposition of $\scr m$ as a product of measures on the laminations induced on the quotient by $\widetilde{\cal W}^{cu}=\big\{(x,\cdot,\cdot):x\in\bord\G\}$ and $\widetilde{\cal V}=\big\{(\cdot,y,t):y\in\bord\G,\, t\in\R\big\}.$ The local product structure induced by $\bord\G\times\bord\G\times\R$ permits to transport the measures $m_{(y,t)},$ parallel to the central stable leaf $W^{\cu}[(x,y,t)],$ to the stable leaf of $[(x,y,t)]$ for $\phi^c$ to obtain a family of measures at each strong stable leaf $\nu^{\ss}_p$ such that \begin{equation}\label{empuje}\frac{d\big(\phi^c_{-t}\big)_*\nu^{\ss}_p}{d\nu^{\ss}_{\phi^c_tp}}=e^{-\delta t}.\end{equation} Margulis's description of the measure maximizing entropy (\S\,\ref{mm}) then gives that $\delta=\h_c$ and that $\scr m$ maximizes entropy for $\phi^c.$ Thus, subject to the existence of the Patterson-Sullivan measures and the Gromov product, we summarize the above discussion in the following.

\begin{prop1}\label{entropiamax} The measure $\scr m$ on $\RR c$ maximizes entropy for the flow $\phi^c.$ The exponent $\delta$ necessarily equals the topological entropy of $\phi^c,$ $\h_c,$ if $\nu$ is another Patterson-Sullivan measure for $c$ then $\nu=\mu.$ \end{prop1}

Let us consider again the measure $\widetilde{\scr m}$ on $\bord^2\G\times\R$ from (\ref{tildem}) but let us instead study the $\G$-action on the $\R$-coordinate by the Gromov-Mineyev cocycle, so that $\G\/\big(\bord^2\G\times\R\big)=\sf U\G$ and the induced flow is $\sf g.$  The quotient measure, $\scr a,$ is $\sf g$-invariant and the orbit equivalence $\U$ from equation (\ref{mapa}) preserves, by the way it is defined, zero flow-invariant sets between $\scr a$ and $\scr m.$ Since $\U$ is a conjugation between $\sf g^{\LL_c}$ and $\phi^c,$ and $\scr m$ maximizes entropy for $\phi^c,$ the measure $\U_*\scr m$ is $\sf g^{\LL_c}$-invariant, maximizes entropy for $\sf g^{\LL_c}$ and has the same zero sets as $\scr a.$ 

One concludes that the Abramov transform (\ref{abramovdef}) ${\scr a}^\#$ (w.r.t. $\LL_c$) is a measure of maximal entropy of the flow $\sf g^{\LL_c}.$ Lemma \ref{Abramov} implies then that $\scr a/|\scr a|$ is the equilibrium state of $-\h_c\LL_c.$ Let us summarize in the following remark:

\begin{obs*}\label{equiA}The probability measure $\scr a/|\scr a|$ on $\sf U\G=\G\/\widetilde{\sf U\G}$ induced by $\widetilde{\scr m}$ is the equilibrium state of $-\h_c\LL_c.$
\end{obs*}

Since the zero sets of an equilibrium state are uniquely determined by the Liv\v sic-cohomology class of the associated potential up to an additive constant (Theorem \ref{equilibrios}), one concludes the following:

\begin{cor1}\label{unicidad} Let $\bar\kappa,\kappa$ be a pair of dual cocycles with non-negative periods and finite entropy. Assume Patterson-Sullivan measures, $\nu$ and $\bar \nu,$ exist for $\kappa$ and $\bar\kappa$ respectively, together with a Gromov product for the pair. If $\nu$ has the same zero sets as $\mu$ then the (scaled) Ledrappier potentials $\h_c\LL_c$ and $\h_\kappa\LL_\kappa$ are Liv\v sic-cohomologous and $\nu=\mu.$
\end{cor1}

\begin{proof} A final argument is required, indeed from \S\,\ref{equilibrios} there exists a constant $K$ such that $\h_c\LL_c$ and  $\h_\kappa\LL_\kappa+K$ are Liv\v sic-cohomologous. However, since $\h_c$ is the topological entropy of $\sf g^{\LL_c},$ Lemma \ref{Abramov} gives $$0=P(\sf g,-\h_c\LL_c)=P(\sf g,-\h_\kappa\LL_\kappa+K)=K,$$ where the second equality holds by Remark \ref{LivP}, and the third equality by the definition of $P$ (equation (\ref{pressuredefi})).\end{proof}

The above corollary can be found in Ledrappier \cite{ledrappier} when $\G$ is the fundamental group of a negatively curved closed manifold. The proof uses also a disintegration argument. One may also check Kaimanovich \cite{kaimanovichPSBM} and Babillot's survey \cite{babillotSurvey} specifically for the Buseman cocycle ($\G$ as in Ledrappier's aforementioned situation), and Carvajales \cite[Appendix]{lyon2} for the refraction cocycle $\bb\varphi$ (see \S\,\ref{arbitraryG} for the definition) of a $\simple$-Anosov representation of an arbitrary word-hyperbolic group.

\subsection{Vector-valued cocycles I: the critical hypersurface}\label{entropy1}\label{directions} 

Let now $c:\G\times\bord\G\to V$ be a H\"older cocycle and consider the compact convex set $\medidas^\sf g(\LL_c)\subset V.$ Since this set depends on the base flow $\sf g$ it is natural to consider the \emph{limit cone} of $c$ $$\cone_c=\overline{\bigcup_{\g\in\G}\R_+\cdot\ell_c(\g)}=\R_+\cdot\medidas^\sf g(\LL_c).$$

\begin{obs*}\label{reparam} Up to Liv\v sic -cohomology we can assume that $\LL_c$ has values in the vector space $V'=\<\medidas^{\sf g}(\LL_c)\>.$ We can moreover choose a reparametrization $\sf g^f$ of $\sf g$ so that if $\LL_c^{\sf g^f}:\sf U\G\to V'$ is the Ledrappier potential for $c$ over $\sf g^f$ then the flow $\sf g^f$ together with the potential $\LL_c^{\sf g^f}$ verify Assumption \ref{assuA} from \S\,\ref{E=1}.\end{obs*}

\begin{proof} By Remark \ref{positiva} the space of Liv\v sic-cohomology classes over $\sf g$ is infinite dimensional, so the remark follows.\end{proof}

We will work from now on with flow $\sf g^f$ given by the remark and the Ledrappier potential $\LL_c^{\sf g^f},$ we will rename these by $\sf g$ and $\LL_c$ though as to not overcharge the paper with notation and keep in mind that, when we restrict the image of $\LL_c$ to $V',$  Assumption \ref{assuA} is verified.

Let $\cdc=\{\psi\in V^*:\psi|\cone_c\geq0\}$ be the \emph{dual cone} of $c.$ For $\psi\in V^*$ denote by $c_\psi=\psi\circ c:\G\times\bord\G\to\R$ and $\h_\psi=\h_{c_\psi}.$

\begin{assu}\label{assuC}There exists $\psi\in \cdc$ such that $c_\psi$ has finite entropy.
\end{assu}

\begin{lema1}\label{h<infty} In this case $0\notin\medidas^{\sf g}(\LL_c),$ $\cdc$ has non-empty interior and $\inte\cdc=\big\{\varphi\in\cdc:\h_\varphi<\infty\big\}.$
\end{lema1}

\begin{proof} The Lemma follows essentially from \S\,\ref{LedrappierPot} and Lemma \ref{positiva}, indeed since $\h_{\psi}=\h_{\psi(\LL_c)}<\infty$ there exists $\kappa>0$ such that for all hyperbolic $\g\in\G$ it holds $\psi\big(\ell_c(\g)/p(\g)\big)>\kappa;$ by density of periodic orbits on $\medidas^{\sf g}$ one has $\inf \big\{\psi\big(\medidas^{\sf g}(\LL_c)\big)\big\}>\kappa>0.$ The remaining statements follows similarly.\end{proof}

Since $0\notin\medidas^{\phi^{c_\psi}}(\LL_c)$ we can apply Corollary \ref{E=1}. Denote by \begin{alignat*}{2}\cal Q_c  & =\Big\{\varphi\in\inte\cdc:\h_\varphi=1\Big\}\\ \cal D_c & = \Big\{\varphi\in\inte\cdc:\h_\varphi\in(0,1)\Big\}=\Big\{\varphi\in V^*:\sum_{[\g]\in[\G]}e^{-\ell_{c_\varphi}(\g)}<\infty\Big\}\end{alignat*} respectively the \emph{critical hypersurface} and the \emph{convergence domain} of $c.$

Since we haven't required the cone $\cone_c$ to have non-empty interior, consider its annihilation space $$\ann(\cone_c)=\{\psi\in V^*:\cone_c\subset\ker\psi\}.$$ If $\varphi\in\inte\cdc$ and $\psi\in\ann(\cone_c)$ then the potentials $\varphi(\LL_c)$ and $(\varphi+\psi)(\LL_c)$ are Liv\v sic-cohomologous. Let $\pi^c:V^*\to V^*/\ann(\cone_c)$ be the quotient projection.

We also import the concept of dynamical intersection of \S\,\ref{DynIn} to this setting using the Ledrappier potential of $c.$ For $\varphi\in\cal Q_c$ define the \emph{dynamical intersection map} associated to $c$ by  $\II_\varphi=\II^c_\varphi:V^*\to\R$ be defined by $$\II_\varphi(\psi)=\II\big(\varphi(\LL_c),\psi(\LL_c)\big).$$ By definition $\II\big(\varphi(\LL_c),\psi(\LL_c)\big)$ only depends on the Liv\v sic-cohomology classes of $\varphi(\LL_c)$ and $\psi(\LL_c),$ so we may  freely consider $\II$ as defined on $\cal Q_c\times V^*$ or on $\pi^c(\cal Q_c)\times V^*/\ann(\cone_c).$ One has the following:

\begin{cor1} Under Assumption \emph{\ref{assuC}} one has that $\pi^c(\cal D_c)$ is a strictly convex set whose boundary is $\pi^c(\cal Q_c).$  The latter is an analytic co-dimension-one sub-manifold. The map $\sf u:\pi^c(\cal Q_c)\to\P\big(\spa\{\cone_c\}\big)$ defined by $$\varphi\mapsto\sf u_\varphi:=\sf T_\varphi\pi^c(\cal Q_c)=\ker\II_\varphi$$ is an analytic diffeomorphism between $\pi^c(\cal Q_c)$ and directions in the relative interior of $\cone_c.$\end{cor1}

\begin{proof}By Remark \ref{reparam} we can apply \S\,\ref{E=1} to $\LL_c,$ the equality $\sf T_{\varphi}\cal Q_c=\ker\II_\varphi$ follows from Corollary \ref{TI}.\end{proof}

\subsection{Vector-valued cocycles II: skew-product structure}\label{fibered}\label{tutti2}

We remain in the situation of \S\,\ref{entropy1}, i.e. we keep Assumption \ref{assuC}. It follows at once from Theorem \ref{refraction} that the $\G$-action $\bord^2\G\times V$ $$\g(x,y,v)=\big(\g x,\g y,v-c(\g,y)\big)$$ is properly discontinuous. We aim to give a description of the $V$-action on the quotient space $\cjto$ space. 

By Lemma \ref{h<infty} and Theorem \ref{refractionThm} we can, for every $\varphi\in\inte\cdc,$  consider the refraction flow $\phi^{c_\varphi}=\big(\phi^{c_\varphi}_t:\RR{c_\varphi}\to\RR{c_\varphi}\big)_{t\in\R}.$ Such a $\varphi$ is fixed from now on.
 
\begin{obs*} Let us  still denote by $\LL_c$ the Ledrappier potential for $c$ over the flow $\phi^{c_\varphi},$ i.e. for every hyperbolic $\g\in\G$ one has $$\int_{[\g]}\LL_c=\ell_c(\g)\in V.$$ \end{obs*}

Let $\scr p$ be the probability measure of maximal entropy of $\phi^{c_\varphi}.$ The \emph{growth direction of $\varphi$} is the line of $V$ \begin{equation}\label{diru}\sf u_\varphi=\sf T_\varphi\cal Q_c= \R\cdot\int_{\RR{c_\varphi}}\LL_c d\scr p\end{equation} (the last equality follows from equation (\ref{dpres}) and Corollary \ref{E=1Cor}). Consider also the projection $\pi^\varphi:V\to\ker\varphi$ parallel to $\sf u_\varphi$ and denote by $\LL_c^\varphi:\RR{c_\varphi}\to\ker\varphi$ the composition $\LL_c^\varphi=\pi^\varphi\circ\LL_c.$ Observe that \begin{equation}\label{=0} \int_{\RR{c_\varphi}} \LL_c^\varphi d\scr p=0.\end{equation} Fix $u_\varphi\in\sf u_\varphi$ with $\varphi(u_\varphi)=1$ and define the \emph{directional flow} $\big(\df^\varphi_t:\G\/\big(\bord^2\G\times V\big)\to\G\/\big(\bord^2\G\times V\big)\big)_{t\in\R}$ by the induced on the quotient of $$t\cdot(x,y,v)=(x,y,v-tu_\varphi).$$

\begin{prop1}[S. \cite{orbitalcounting}]\label{enderezarV1} There exist a (bi)-H\"older-continuous homeomorphism $$\overline E:\cjto\to \RR{c_\varphi}\times\ker \varphi,$$ commuting with the $\ker\varphi$ action, that conjugates the flow $\df^\varphi$ with $\Phi^\varphi=\big(\Phi_t^\varphi:\RR{c_\varphi}\times\ker\varphi\to\RR{c_\varphi}\times\ker\varphi\big)_{t\in\R}$ $${\displaystyle\Phi_t^\varphi(p,v):=\Big(\phi^{c_\varphi}_t(p),v-\int_0^t\LL_c^\varphi(\phi^{c_\varphi}_s p)ds\Big).}$$
\end{prop1}

\begin{proof} This is the first item of S. \cite[Proposition 3.5]{orbitalcounting} when $\G $ is the fundamental group of a closed negatively curved manifold. The proof adapts \emph{mutatis mutandis} once \S\,\ref{refraction} is stablished. \end{proof}

Let us moreover place ourselves in the existence assumptions of subsection \ref{PattersonC} for the cocycle $c_\varphi,$ i.e. assume there exists:\begin{itemize}\item[-] a dual cocycle $\bar c_\varphi$ together with a Gromov product $[\cdot,\cdot]$ for the pair $(\bar c_\varphi,c_\varphi),$ \item[-] a Patterson-Sullivan measure for each $c_\varphi$ and $\bar c_\varphi,$ denoted by $\mu$ and $\bar\mu$ respectively. Recall from Proposition \ref{entropiamax} that necessarily the exponent of both $\mu$ and $\bar\mu$ is $\h_{c_\varphi},$ the topological entropy of $\phi^{c_\varphi}.$ \end{itemize}

By Proposition \ref{entropiamax} the measure $\scr m$ maximizes entropy for $\phi^{c_\varphi},$ one has then $\scr p=\scr m/|\scr m|.$ Consider the $\varphi$-\emph{Bowen-Margulis} measure $\BM^\varphi$ on $\G\/\big(\bord^2\G\times V\big)$ defined as the induced on the quotient by $$e^{-\h_{c_\varphi}[\cdot,\cdot]}\bar\mu\otimes\mu\otimes \Leb_{V},$$ for a Lebesgue measure on $V.$ One has the following result.

\begin{prop1}[S. \cite{orbitalcounting}]\label{enderezarV2} The (bi)-H\"older-continuous homeomorphism from Proposition \ref{enderezarV1} is a measurable isomorphism between $\BM^\varphi$ and $\scr m\otimes \Leb_{\ker\varphi}.$\end{prop1}

\begin{proof} This follows again as in S. \cite[Proposition 3.5]{orbitalcounting} once Proposition \ref{entropiamax} is established. \end{proof}

\subsection{Vector-valued cocycles III: Dynamical consequences}\label{ergoDic} 
We remain in the situation of \S\,\ref{entropy1}. Let us say that $c$ is \emph{non-arithmetic} if the periods of $c$ span a dense subgroup in $V.$

One concludes at once the following consequences:

\begin{cor1}[Ergodicity dichotomy]\label{CorergoDic} Assume $c$ is non-arithmetic, then $\df^\varphi$ is mixing as in Theorem \ref{mixing0}, consequently if $\dim V\geq4$ then $\cal K(\df^\varphi)$ has zero $\BM^\varphi$-measure. If  $\dim V\leq2$ then the directional flow $\df^\varphi$ is $\BM^\varphi$-ergodic.
\end{cor1}

\begin{proof} By Proposition \ref{enderezarV1} the flow $\df^\varphi$ is H\"older-conjugated to the skew-product of $\phi^{c_\varphi}$ with the Ledrappier potential $\LL_c^\varphi,$ the fiber being $\ker\varphi$ and thus $\dim V-1$-dimensional. Proposition \ref{enderezarV2} describes the desired measures in terms of the skew-product structure; non-arithmeticity of $c$ and equation (\ref{=0}) allow us to apply Corollary \ref{dicoflujos}.\end{proof}

\section{Algebraic semi-simple groups over a local field}\label{localfield}

This section is a collection of necessary language and basic results needed for the sequel. Most of the material covered here can be found in, for example, Borel's book \cite{Borel-Libro} and/or in the book by Benoist-Quint \cite{BQlibro}.

Let $\K$ be a local field. If $\K$ is non-Archimedean let us denote by $q$ the cardinality of its residue field, $u\in\K$ a uniformizing element, and choose the norm $|\,|$ on $\K$ so that $|u|=q^{-1}.$ In this case $\log$ denotes the logarithm on base $q,$ so that $\log q=1.$ If $\K=\R$ or $\C$ then $|\,|$ is the standard modulus, $q:=e$ and $\log$ is the usual logarithm.

Let $\sf G$ be the $\K$-points of a connected semi-simple $\K$-group, $\sf A$ the $\K$-points of a maximal $\K$-split torus and $\mathbf{X}(\sf A)$ the group of its $\K^*$-characters. Consider the real vector space $\E^*=\mathbf{X}(\sf A)\otimes_\Z\R$ and $\E$ its dual.  For every $\chi\in\mathbf X(\sf A),$ we denote by $\chi^\omega$ the corresponding linear form on $\E.$

Let $\root$ be the set of restricted roots of $\sf A$ in $\frak g,$ the set $\root^\omega$ is a root system of $\E^*.$ Let $(\root^\omega)^+$ be a system of positive roots, $\E^+$ the associated Weyl chamber and $\root^+$ and $\simple\subset\root$ the corresponding system of positive roots and simple roots respectively.

Let $\nu:\sf A\to\E$ be defined, for $z\in\sf A,$ as the unique vector in $\E$ such that for every $\chi\in\mathbf X(\sf A)$ one has $$\chi^\omega(\nu(z))=\log|\chi(z)|.$$ Denote by $\sf A^+=\nu^{-1}(\E^+).$

Let $\Weyl$ be the Weyl group of $\root,$ it is isomorphic to the quotient of the normalizer $N_{\sf G}(\sf A)$ of $\sf A$ in $\sf G$ by its centralizer $Z_{\sf G}(\sf A).$ Let $\ii:\E\to\E$ be the opposition involution: if $u:\E\to\E$ is the unique element in the Weyl group with $u(\E^+)=-\E^+$ then $\ii=-u.$

\subsection{Restricted roots and parabolic groups}

Consider $\t\subset\simple$ and let $\sf P_\t,$ resp. $\wk{\sf P}_\t,$ be the normalizers in $\sf G$ of, respectively, the Lie algebras \begin{alignat*}{2} & \bigoplus_{\aa\in\<\simple-\t\>}\ge_{-\aa}\oplus \bigoplus_{\aa\in\root^+}\ge_\aa,\\ & \bigoplus_{\aa\in\<\simple-\t\>}\ge_{\aa}\oplus \bigoplus_{\aa\in\root^+}\ge_{-\aa}. \end{alignat*}

The $\t$-\emph{flag space} is $\cal F_\t=\sf G/\sf P_\t.$ The orbit $\sf G\cdot\big([\wk{\sf P}_\t],[\sf P_\t]\big)\subset \cal F_{\ii\t}\times\cal F_\t$ is the unique open orbit on this product space, we will denote it by $\posgen_\t$ and say that $(x,y)\in\cal F_{\ii\t}\times\cal F_\t$ are \emph{transverse} if in fact $(x,y)\in\posgen_\t.$

Denote by $(\cdot,\cdot)$ a $\Weyl$-invariant inner product on $\E,$ $(\cdot,\cdot)$ the induced inner product on $\E^*,$ define $\<\,,\,\>$ on $\E^*$ by $$\<\chi,\psi\>=\frac{2(\chi,\psi) }{(\psi,\psi)}$$ and let $\{\peso_\aa\}_{\aa\in\Pi}$ be \emph{the fundamental weights} of $\root,$ defined by the equations $\<\peso_\aa,\sroot\>=d_\aa\delta_{\aa\sroot},$ where $d_\aa=1$ if $2\aa\notin\root$ and $d_\aa=2$ otherwise.

\subsection{The center of the Levi group}\label{center} We now consider the vector subspace 
$$\E_\t=\bigcap_{\aa\in\simple-\t}\ker\aa^\omega$$ 
together with the unique projection $p_\t:\E\to\E_\t$ invariant under the subgroup of the Weyl group $\Weyl_\t=\{w\in \Weyl:w|\E_\t=\id\}.$ The dual space $(\E_\t)^*$ sits naturally as the subspace of $\E^*$ of $p_\t$-invariant linear forms $$(\E_\t)^*=\big\{\varphi\in\E^*:\varphi\circ p_\t=\varphi\big\}.$$ It is spanned by the fundamental weights $\big\{\peso_\sroot|\E_ \t:\sroot\in\t\big\}.$

\subsection{Cartan decomposition}

Let $\sf K\subset\sf G$ be a compact group that contains a representative for every element of the Weyl group $\Weyl.$ This is to say, such that the normalizer $N_{\sf G}(\sf A)$ verifies $N_{\sf G}(\sf A)=(N_{\sf G}(\sf A)\cap \sf K)\sf A.$ One has $\sf G=\sf K\sf A^+\sf K$ and if $z,w\in \sf A^+$ are such that $z\in \sf Kw\sf K$ then $\nu(z)=\nu(w).$ There exists  thus a function 
$$\cartan:\sf G\to \E^+$$ such that for every $g_1,g_2\in \sf G$ one has $g_1\in \sf Kg_2 \sf K$ if and only if $\cartan(g_1)=\cartan(g_2).$ It is called the \emph{Cartan projection} of $\sf G.$

\subsection{Jordan decomposition}\label{jordan}Recall that the Jordan decomposition states that every $g\in\sf G$ has a power\footnote{($k=1$ if $\K$ is Archimedean)} $g^k$ that can be written as a commuting product $g=g_eg_hg_n,$ where $g_e$ is elliptic, $g_h$ is semi-simple over $\K$ and $g_n$ is unipotent. The component $g_h$ is conjugate to an element $z_g\in \sf A^+$ and we let $$\lambda(g)=(1/k)\nu(z_g)\in\E^+.$$ The map $\lambda:\sf G\to\sf E^+$ is \emph{the Jordan projection} of $\sf G.$ We will also denote by $\lambda_\t:\sf G\to \E_\t$ the composition $p_\t\circ\lambda.$  For $\sf G=\PGL_d(\K)$ we will denote by $\lambda_1(g)$ the logarithm of the spectral radius if $g.$

\subsection{Representations of $\sf G$}\label{representaciones}
Let $\sf V$ be a finite-dimensional $\K$-vector space and $\Fund:\sf G\to\PGL(\sf V)$ be an algebraic irreducible representation. Then the \emph{weight space} associated to $\chi\in\mathbf X(\sf A)$ is the vector space $$\sf V_\chi=\{v\in \sf V:\Fund(a) v=\chi(a) v\ \forall a\in\sf A\}$$ and if $V_\chi\neq0$ then we say that $\chi^\omega\in\E^*$ is a \emph{restricted weight} of $\Fund.$ Theorem 7.2 of Tits \cite{Tits} states that the set of weights has a unique maximal element with respect to the order $\chi\geq\psi$ if $\chi-\psi$ is a sum of simple roots with non-negative coefficients. This is called \emph{the highest weight} of $\Fund$ and denoted by $\chi_\Fund.$ 

We denote by $\|\,\|_\Fund$ a norm  on $\sf V$ invariant under $\Fund \sf K$ and such that $\Fund\sf A$ consists on semi-homotheties\footnote{i.e. diagonal on an orthonormal basis $\cal E$ of $V,$ in the classical sense if $\K$ Archimedean, and such that $\|\sum_{e\in\cal E}v_ee\|=\max \{|v_e|\}$ if $\K$ is non-Archimedean.}. If $\K$ is Archimedean the existence of such a norm is classical (see for example Benoist-Quint \cite[Lemma 6.33]{BQlibro}), if $\K$ is non-Archimedean then this is the content of Quint \cite[Th\'eor\`eme 6.1]{Quint-localFields}. 

For every $g\in \sf G$ one has then 
\begin{alignat}{2}
\log\|\Fund g\|_\Fund & =\chi_\Fund^\omega\big(\cartan(g)\big) \nonumber,\\ \log\lambda_1(\Fund g) & =\chi_\Fund^\omega\big(\lambda(g)\big).\label{eq:normayrep}
\end{alignat} 

Denote by $W_{\chi_\Fund}$ the $\Fund\sf A$-invariant complement of $\sf V_{\chi_\Fund}.$ The stabilizer in $\sf G$ of $\sf W_{\chi_\Fund}$ is $\wk{\sf P}_{\t,\K},$ and thus one has a map of flag spaces \begin{equation}\label{maps}(\Xi_\Fund,\Xi^*_\Fund):\cal F_{\t_\Fund}^{(2)}(\sf G)\to \grassman_{\dim \sf V_{\chi_\Fund}}^{(2)}(\sf V),\end{equation} where $\t_\Fund=\{\sroot\in\simple:\chi_\Fund-\sroot\textrm{ is a weight of $\Fund$}\}$. This is a proper embedding which is an homeomorphism onto its image. Here $\Gr_{\dim \sf V_{\chi_\Fund}}^{(2)}(\sf V)$ is the open $\PGL (\sf V)$-orbit in the product of the Grassmannians of $(\dim \sf V_{\chi_\Fund})$-dimensional and $(\dim \sf V-\dim \sf V_{\chi_\Fund})$-dimensional subspaces.

One has the following proposition from Tits \cite{Tits} that guarantees existence of certain representations of $\sf G.$ We say that $\phi$ is \emph{proximal} if $\dim \sf V_{\chi_\Fund}=1.$

\begin{prop1}[Tits \cite{Tits}]\label{FundTits} For every $\sroot\in\simple$ there exists an irreducible proximal representation of $\sf G$ whose highest restricted weight is $l_\sroot\peso_\sroot$ for some $l_\sroot\in\Z_{\geq1}.$\end{prop1}

\begin{defi1}We will fix and denote by $\Fund_\sroot:\sf G\to\GL(\sf V_\sroot)$ such a set of representations.\end{defi1}

\subsection{Buseman-Iwasawa cocycle}\label{BI} The \emph{Iwasawa decomposition} of $\sf G$ states that every $g\in\sf G$ can be written as a product $lzu$ with $l\in\sf K,$ $z\in\sf A$ and $u\in\sf U_\simple,$ where $\sf U_\simple$ is the unipotent radical of $\sf P_{\simple}.$ When $\K$ is non-Archimedean the Iwasawa decomposition is not unique, however if $z_1,z_2\in\sf A$ are such that $z_1\in\sf K z_2\sf U_{\simple}$ then $\nu(z_1)=\nu(z_2).$

The \emph{Buseman-Iwasawa cocycle} of $\sf G,$ $\bus:\sf G\times\cal F\to\E,$ is defined by, for all $g\in\sf G$ and $k[\sf P_\simple]\in\cal F,$ if $gk=lzu$ is an Iwasawa decomposition of $gk$ then $\bus(g,k[\sf P_\simple])=\nu(z).$ Quint proved the following.

\begin{lema1}[{Quint \cite[Lemmas 6.1 and 6.2]{quint1}}] The function $\bus_\t=p_\t\circ\bus$ factors as a cocycle $\bus_\t:\sf G\times \cal F_\t\to\E_\t.$\end{lema1}

The Buseman-Iwasawa cocycle can also be read from the representations of $\sf G.$ Indeed, Quint \cite[Lemme 6.4]{quint1} states that for every $g\in\sf G$ and $x\in\cal F_\t$ one has \begin{equation}\label{busnorma}l_\sroot\peso_\sroot(\bus(g,x))=\log\frac{\|\Fund_\sroot(g)v\|_\Fund}{\|v\|_\Fund},\end{equation} where $v\in\Xi_{\Fund_\sroot}(x)\in\P(\sf V_\sroot)$ is non-zero.

\subsection{Gromov product}\label{GryBus}

As in S. \cite{orbitalcounting}, the \emph{Gromov product} $\Gr_\t:\posgen_\t\to\E_\t$ is defined such that, for every $(x,y)\in\posgen_\t$ and $\sroot\in\t,$ one has $$l_\sroot\peso_\sroot\big(\Gr_\t(x,y)\big)= \log\frac{|\varphi(v)|}{\|\varphi\|_{\Fund_\sroot}\|v\|_{\Fund_\sroot}},$$ where $\varphi\in \Xi^*_{\Fund_\sroot}(x)$ and $v\in \Xi_{\Fund_\sroot}(y)$ are the equivariant maps from equation (\ref{maps}). 

\begin{obs*}\label{-infty}Observe that the limiting situation $l_\sroot\peso_\sroot\big(\Gr_\t(x,y)\big)=-\infty$ occurs when $v\in\ker\varphi,$ i.e. when $x$ and $y$ are no longer transverse flags, so a statement of the form $\peso_\sroot\Gr_\t(x,y) \geq-\kappa$ for all $\sroot\in\t$ is a quantitative version (that depends on $\sf K$) of the transversality between $x$ and $y.$
\end{obs*}

A straightforward computation (S. \cite[Lemma 4.12]{orbitalcounting}) gives, for all $g\in\sf G$ and $(x,y)\in\posgen_\t,$ \begin{equation}\label{forGr}\Gr_\t(gx,gy)-\Gr_\t(x,y)=-\big(\ii\bus_{\ii\t}(g,x)+\bus_\t(g,y)\big).\end{equation}

\subsection{Proximality}\label{proxBenoist}

Recall that $g\in\PGL_d(\sf V)$ is \emph{proximal} if it has a unique eigenvalue with maximal modulus and that the multiplicity of this eigenvalue in the characteristic polynomial of $g$ is $1.$ The associated eigenline is denoted by $g^+\in\P(\sf V)$ and $g^-$ is its $g$-invariant complementary subspace.

We say then that $g\in\sf G$ is $\t$-\emph{proximal} if for every $\sroot\in\t$ one has $\Fund_\sroot(g)$ is proximal. In this situation, there exists a pair $(g^-_\t,g^+_\t)\in\posgen_\t,$ defined by, for every $\sroot\in\t,$ $\Xi_{\Fund_\sroot}\big(g^+_\t\big)=\Fund_\sroot(g)^+,$ and every flag $x\in\cal F_\t$ in general position with $g^-_\t$ verifies $g^nx\to g^+_\t.$ 

It is also useful to consider a quantified version of proximality. Given $r,\eps$ positive we say that $g$ is $(r,\eps)$-\emph{proximal} if it is proximal, $$\peso_\sroot\Gr_\t(g_\t^-,g^+_\t)\geq-r$$ for all $\sroot\in\t$ and for every $x\in\cal F_\t$ with $\min_{\sroot_\in\t}\peso_\sroot\Gr_\t(g_\t^-,x)\geq- \eps^{-1}$ one has $d_{\cal F_\t}(gx,g^+_\t)\leq\eps.$ More details on the following can be found in S. \cite[Lemma 5.6]{quantitative}

\begin{prop1}[{Benoist \cite[Corollaire 6.3]{Benoist-HomRed}}]\label{proxCartan}For every $\delta>0$ there exist $r,\eps>0$ such that if $g\in\sf G$ is $(r,\eps)$-proximal then $$\big\|\cartan_\t(g)-\lambda_\t(g)-\Gr_\t(g_\t^-,g^+_\t)\big\|\leq\delta.$$
\end{prop1}

\subsection{Cartan attractors}\label{s.Basin} Consider $g\in\sf G$ and let $g=k_gz_gl_g$ be a Cartan decomposition. We say that $g\in\sf G$ has \emph{a gap at $\t$} if for all $\sroot\in\t$ one has $$\sroot\big(\cartan(g)\big)>0.$$ In that case the \emph{Cartan attractor of $g$ in $\cal F_\t$} $$U_\t(g)=k_g[\sf P_{\t}]$$ is well defined: uniquely defined if $\K$ is Archimedian; defined up to a ball of radius $q^{-\min_{\sroot\in\t}\sroot(g)}$ if $\K$ is non-Archimedean (see Pozzetti-S.-Wienhard \cite[Remark 2.4]{PSW1}).

\begin{obs*} For every $\sroot\in\t$ one has ${\displaystyle\Xi_{\Fund_\sroot}\big(U_\t(g)\big)=U_{1}\big(\Fund_\sroot(g)\big).}$
\end{obs*}

\begin{lema1}[{Bochi-Potrie-S. \cite[Lemma A.5]{BPS}}]\label{lemma1} Consider $g,h\in\sf G$ such that $h$ and $gh$ have gaps at every $\sroot\in\t,$ then one has $$d\big(U_\t(gh),gU_\t(h)\big)\leq q^{-\min_{\sroot\in\t}\sroot(h)}\cdot\max_{\sroot\in\t}\big\{\|\Fund_\sroot(g)\|\|\Fund_\sroot(g^{-1})\|\big)\big\}.$$
\end{lema1}

The \emph{Cartan basin of $g$} is defined, for $\alpha>0,$ by (recall Remark \ref{-infty})$$B_{\t,\alpha}(g)= \Big\{x\in\cal F_\t: \peso_\sroot\big(\Gr_\t\big(U_{\ii\t}(g^{-1})\big),x\big)>-\alpha\ \forall\sroot\in\t\Big\}.$$

\noindent 
It is clear from the definition that given $\alpha>0$ there exists a constant $K_\alpha$ such that if $y\in\cal F_{\t}$ belongs to $B_{\t,\alpha}(g)$ then one has
\begin{equation}\label{comparision-shadow}
\big\|\cartan_\t(g)-\bus_\t(g,y)\big\|\leq K_\alpha.
\end{equation}

\begin{lema1}[{Quint \cite[Lemme 6.6]{quint1}}]\label{lQ} For every $g\in \sf G$ one has $\cartan_\t(gh)-\cartan_\t(h)-\bus_\t\big(g,U_\t(h)\big)\to0$ as $\min_{\sroot\in\t}\sroot\big(\cartan(h)\big)\to\infty.$
\end{lema1}

\subsection{General facts on discrete subgroups}\label{generalidades} We record here some facts related to the title that we will need in the sequel.

\begin{lema1}\label{aFinito}Let $\grupo\subset\sf G$ be a discrete subgroup, then for every $\varphi\in\E^*$ strictly positive on $\E^+$ the exponential rate $$\delta^\varphi_\grupo=\limsup_{t\to\infty}\frac1t\log\#\big\{g\in\grupo:\varphi\big(\cartan(g)\big)\leq t\big\}$$ is finite.
\end{lema1}

\begin{proof}Follows from a computation of the Haar measure of $\sf G,$ to be found in Helgason \cite{helga} for the Archimedean case and in Matsumoto \cite[\S\,3.2.7]{matsumoto} for the non-Archimedean case, see Quint \cite[\S\,4]{quint2} for details.\end{proof}

We record also the following theorem from Benoist \cite{benoist2}.

\begin{thm1}[{Benoist \cite{benoist2}}]\label{densidad} Assume $\K=\R$ and let $\grupo\subset\sf G$ be a Zariski-dense subgroup, then the group spanned by the Jordan projections $\{\lambda(g):g\in\grupo\}$ is dense in $\E.$
\end{thm1}

\section{Anosov representations}\label{AnosovReps}

Anosov representations where introduced by Labourie \cite{labourie} for fundamental groups of negatively curved manifolds and extended to arbitrary finitely generated hyperbolic groups by Guichard-Wienhard \cite{olivieranna}. They originated as a tool to study higher rank Teichm\"uller Theory, and are nowadays consider as the higher-rank analog of what is known in pinched negative curvature as \emph{convex co-compact} groups.

\begin{notacion}If $\rho:\G\to\sf G$ is a representation we will simplify notation and denote, for $\g\in\G,$ by $\g_\rho=\rho(\g).$\end{notacion}

\subsection{Real projective-Anosov representations}\label{flujoBCLS}

We begin by recalling Labourie's original approach. Let $\G$ be a finitely generated word-hyperbolic group. If $\rho:\G\to\PGL_d(\R)$ is a representation then we can consider the \emph{natural flat bundle automorphism} defined as follows. Consider the flat bundle $\R^d\to\sf E_\rho\to\sf U\G$ defined by $\widetilde{\sf U\G}\times\R^d/_\sim$ where $(p,v)\sim\big(\g p,\g_\rho v\big),$ and define $\hat{\sf g}=\big(\hat{\sf g}_t:\sf E_\rho\to \sf E_\rho\big)_{t\in\R}$ as the induced on the quotient by $t\cdot(p,v)=(\widetilde{\sf g}_t p, v).$

\begin{defi1} The representation $\rho$ is \emph{projective-Anosov} if there exists a pair of continuous $\rho$-equivariant maps \begin{alignat*}{2} \xi^1 & :\bord\G\to\P(\R^d)\\ \xi^{d-1}& :\bord\G\to\P\big((\R^d)^*\big)\end{alignat*} such that:\begin{itemize}\item[-] for every $(x,y)\in\bord^2\G$ one has $\ker\xi^{d-1}(x)\oplus\xi^1(y)=\R^d;$ this induces a $\hat{\sf g}$-invariant decomposition $\Xi\oplus\Theta=\sf E_\rho$

\item[-] the decomposition $\sf E_\rho=\Xi\oplus\Theta$ is a \emph{dominated splitting} for $\hat{\sf g},$ i.e. there exist $c,\alpha$ positive such that for every $v\in\Xi_p$ and $w\in\Theta_p$ one has $$\frac{\|\hat{\sf g}_tv\|}{\|\hat{\sf g}_tw\|}\leq ce^{-\alpha t}\frac{\|v\|}{\|w\|}.$$
\end{itemize}
\end{defi1}

One has the following standard consequences, see for example Guichard-Wienhard \cite[Lemma 3.1]{olivieranna} or Bridgeman-Canary-Labourie-S. \cite[Lemma 2.5+Proposition 2.6]{pressure}. Recall from \S\,\ref{proxBenoist} that $g\in\PGL_d(\R)$ is \emph{proximal} if the Jordan block associated to the eigenvalues with maximal modulus is 1-dimensional.

\begin{lema1}\label{proximal}
 If $\rho$ is projective-Anosov then  for every hyperbolic $\g$ one has $\g_\rho$ is proximal with attracting line $\xi^1(\g_+).$ In particular the entropy $$\lim_{t\to\infty}\frac1t\log\#\big\{[\g]\in[\G]\textrm{ hyperbolic}:\lambda_1\big(\g_\rho \big)\leq t\big\}\in[0,\infty).$$ The equivariant maps $\xi^1$ and $\xi^{d-1}$ are H\"older-continuous. 
\end{lema1}

\begin{proof} Let us add a word on finiteness of entropy. Recall from Bowditch \cite{bowditch} that, since $\G$ is hyperbolic its action on the space of pairwise distinct triples $\bord^{(3)}\G$ is properly discontinuous and co-cocompact. If $\g\in\G$ is hyperbolic one can choose then $\eta\in[\g]$ (the conjugacy class of $\g$) whose fixed points are far appart by a constant independent of $\g.$ Since the image $\g_\rho$ is proximal and the equivariant maps are continuous, one has that $\eta_\rho$ is $(r,\eps)$-proximal, for constants $r,\eps$ independent of $[\g].$ By Proposition \ref{proxCartan} one has then $\big|\log\|\eta_\rho\|-\lambda_1(\eta_\rho)\big|<K$ for some $K$ independent of $\eta.$ If follows then that for every $t\in\R_+$ $$\#\big\{[\g]\in[\G]\textrm{ hyperbolic}:\lambda_1(\g_\rho)\leq t\big\}\leq\#\big\{\g\in\G:\log\|\g_\rho\|\leq t+K\big\}.$$ Finiteness of entropy then follows from Lemma \ref{aFinito}.\end{proof}

We use the equivariant maps to construct  a bundle $\R\to\widetilde{\sf F}\to \bord^2\G$ whose fiber at $(x,y)\in\bord^2\G$ is $$\widetilde{\sf F}_{(x,y)}=\big\{(\varphi,v)\in\xi^{d-1}(x)\times\xi^1(y):\varphi(v)=1\big\}/\sim,$$ where $(\varphi,v)\sim(-\varphi,-v).$ This bundle is equipped with a $\G$-action $\g(\varphi,v)=\big(\varphi\circ\g^{-1}_\rho,\g_\rho v\big)$ and an $\R$-action $\big(\widetilde{\sf g^\rho}_t:\widetilde{\sf F}\to\widetilde{\sf F}\big)_{t\in\R}$ defined by $\widetilde{\sf g^\rho}_t\cdot(\varphi,v)=(e^t\varphi,e^{-t}v).$ Let $\sf F=\G\/\widetilde{\sf F}$ and denote by $\sf g^\rho=\big(\sf g^\rho_t:\sf F\to\sf F\big)_{t\in\R}$ the induced flow on the quotient, it is usually called \emph{the geodesic flow of $\rho$}.

\begin{thm1}[Bridgeman-Canary-Labourie-S. \cite{pressure}]\label{tutti}The above $\G$-action is properly discontinuous and co-compact. The flow $\sf g^\rho$ is H\"older-continuous and metric-Anosov with stable/unstable laminations the (induced on the quotient by) \begin{alignat*}{2}\tilde W^\ss\big((x,y,(\varphi,v)\big) & =\big\{\big(x,\cdot,(\varphi,\cdot)\big)\in\widetilde{\sf F}\big\}\\ W^\uu\big((x,y,(\varphi,v)\big) & =\big\{\big(\cdot,y,(\cdot,v)\big)\in\widetilde{\sf F}\big\}.\end{alignat*} It is moreover H\"older-conjugated to the Gromov-Mineyev geodesic flow $\sf g$ of $\G,$ consequently this latter flow is also metric-Anosov. 
\end{thm1}

Consequently, hyperbolic groups admitting a real projective-Anosov representation verify Assumption \ref{assuB} and are thus subject of a Ledrappier correspondence (\S\,\ref{piedrapie}). It is stablished in Carvajales \cite[Appendix]{lyon1} that $\sf g^\rho$ is topologically mixing (regardless the Zariski closure of $\rho$) and thus mixing for any equilibrium state.

\subsection{Arbitrary $\sf G,$ coarse geometry viewpoint}\label{coarse} Let $\sf G$ be as in \S\,\ref{localfield}, we use freely the notation introduced there and fix from now on a subset $\t\subset\simple$ of simple roots.

Let $\G$ be a finitely generated group and denote, for $\g\in\G,$ by $|\g|$ the word length w.r.t. a fixed finite symmetric generating set of $\G.$ 

\begin{defi1}A representation $\rho:\G\to\sf G$ is $\t$-\emph{Anosov} if there exist $c,\mu$ positive such that for all $\g\in\G$ and $\sroot\in\t$ one has \begin{equation}\label{defA}\sroot\big(\cartan(\g_\rho)\big)\geq \mu|\g|-c.\end{equation} The constants $c$ and $\mu$ will be referred to as \emph{the domination constants} of $\rho.$
\end{defi1}

The Theorem below follows from the main result by Kapovich-Leeb-Porti \cite{KLP-Morse} and the standard facts from representation theory stated in \S\,\ref{representaciones}, a proof can also be found in Bochi-Potrie-S. \cite{BPS}.

\begin{thm1}\label{A-A} If $\rho:\G\to\sf G$ is $\t$-Anosov then $\G$ is word-hyperbolic. If moreover $\K=\R$ then for every $\sroot\in\t$ the representation $\Fund_\sroot\circ\rho:\G\to\PGL(V_\sroot)$ is projective-Anosov (as in \emph{\S\,\ref{flujoBCLS}}). \end{thm1}

The following lemma is essentially a consequence of {Bochi-Potrie-S. \cite[Lemma 4.9]{BPS}}. See Pozzetti-S.-Wienhard \cite[Proposition 3.5]{PSW1} for details concerning the non-Archimedean case. The last assertion is classical.

\begin{prop1}[{Bochi-Potrie-S. \cite[Proposition 4.9  ]{BPS}}]\label{conical}
If $\rho:\G\to\sf G$ is $\t$-Anosov, then for any geodesic ray $\{\alpha_n\}_0^\infty$ with endpoint $x$, the limits
$$\xi^\t_\rho(x):=\lim_{n\to\infty}U_\t\big(\rho(\alpha_n)\big)\quad \xi^{\ii\t}_\rho(x):=\lim_{n\to\infty}U_{\ii\t}\big(\rho(\alpha_n)\big)$$
exist and do not depend on the ray; they define continuous $\rho$-equivariant transverse maps $\xi^{\t}:\bord\G\to\cal F_\t$, $\xi^{\ii\t}:\bord\G\to\cal F_{\ii\t}$.  If $\g\in\G$ is hyerbolic, then $\g_\rho$ is $\t$-proximal with attracting point $\xi^\t(\g^+)=(\g_\rho)_\t^+.$
\end{prop1} 

The above Proposition readily implies the following Lemma (recall Remark \ref{-infty}).

\begin{lema1}\label{lejosplanos} Let $\rho:\G\to\sf G$ be $\t$-Anosov, $\{\g_n\}\subset\G$ a divergent sequence and $x\in\bord\G.$ Then, as $n\to\infty,$ one has:
$$\g_n\to x\;\Leftrightarrow\; U_\t\big(\rho(\g_n)\big)\to\xi^\t(x)\;\Leftrightarrow\;\exists\sroot\in\t\textrm{ s.t. }\peso_\sroot\Gr_\t\big(U_{\ii\t}\big(\rho(\g_n)\big),\xi^\t(x)\big)\to-\infty.$$\end{lema1}

We finally record the following useful Lemma. 

\begin{lema1}[{Pozzetti-S.-Wienhard \cite[Lemma 3.6]{PSW1}}]\label{tubo} Let $\rho:\G\to \sf G$ be $\t$-Anosov, then for every $\eps>0$ there exists $L$ such that $$\overline{\bigcup_{\g:|\g|>L} U_\t\big(\g_\rho \big)}\subset \cal N_\eps\big(\xi^\t(\bord\G)\big),$$ where $\cal N_\eps$ denotes the $\eps$-tubular-neighborhood.
\end{lema1}

\begin{obs*}[Non-Archimedean case]The existence of continuous $\rho$-equivariant maps implies, when $\K$ is non-Archimedean, that the boundary of $\G$ is necessarily a Cantor set and thus $\G$ is virtually free. The Gromov-Mineyev of $\G$ is thus a suspension of a sub-shift of finite type and is, hence, metric-Anosov.
\end{obs*}

\noindent
\fbox{\begin{minipage}{0.98\textwidth}
\begin{nott} A $\t$-Anosov representation $\rho:\G\to \sf G$ is fixed from now on. By \S\,\ref{flujoBCLS} for $\K=\R$ or $\C,$ and the preceding paragraph for non-Archimedean $\K,$ the Gromov-Mineyev flow $\sf g$ of $\G$ satisfies Assumption \ref{assuB}. \end{nott}
\end{minipage}
}

\subsection{The refraction cocycle verifies Assumption \ref{assuC}}\label{arbitraryG}

Via the equivariant boundary maps of $\rho$ one can pullback the Buseman-Iwasawa cocycle of $\sf G$ to obtain a H\"older-cocycle on the boundary of $\G$:

\begin{defi1}\label{refr}The \emph{refraction cocycle} of $\rho$ is $\bb{}:\G\times\bord\G\to\E_\t$ $$\bb{}(\g,x)=\bb{}^\rho(\g,x)=\bus_\t\big(\g_\rho ,\xi^\t_\rho(x)\big).$$\end{defi1}

The limit cone of $\bb{}$ will be denoted by $\cone_{\t,\rho}$ and referred to as \emph{the $\t$-limit cone of $\rho$}. The period computation below implies it is the smallest closed cone of $\E_\t$ that contains the projections $\big\{\lambda_\t\big(\g_\rho \big):\g\in\G\big\}.$ We prove moreover that $\bb{}$ verifies Assumption \ref{assuC} from \S\,\ref{entropy1}.

\begin{lema1}\label{beta<infty} The periods of $\bb{}$ are $\bb{}(\g,\g_+)=\lambda_\t\big(\g_\rho \big)$, consequently Assumption \emph{\ref{assuC}} holds for $\bb{}$, in particular $\inte\conodual=\big\{\varphi\in\conodual:\h_\varphi<\infty\big\}$. \end{lema1}

\begin{proof} The first assertion follows from Proposition \ref{conical}. To prove assumption \ref{assuC} holds one considers any $\sroot\in\t$ and the representation $\Fund_\sroot.$ By Theorem \ref{A-A} the composition $\Fund_\sroot\rho:\G\to\GL(\sf V_\sroot)$ is projective-Anosov and thus, by (\ref{eq:normayrep}) and Lemma \ref{proximal}, the form $\peso_\sroot\in(\E_\t)^*$ has finite entropy. The last assertion follows from Lemma \ref{h<infty}.\end{proof}

Lemma \ref{h<infty} and Theorem \ref{refractionThm} give then the following.

\begin{cor1}\label{flujoPhi} There exists a H\"older-continuous function $\LL_{\t,\rho}:\sf U\G\to\E_\t$ such that for every hyperbolic $\g\in\G$ one has $\int_{[\g]}\LL_{\t,\rho}=\lambda_\t\big(\g_\rho \big).$ For every $\varphi\in\inte\conodual$ the $\G$-action on $\bord^{2}\G\times\R$ defined by \begin{equation}\label{espacio}\g\cdot(x,y,t)=\Big(\g x,\g y,t-\bb\varphi(\g,y)\big)\end{equation} is properly discontinuous and co-compact. The $\R$-translation flow induces on the quotient a flow $\phi^\varphi=\big(\phi^\varphi_t:\RR\varphi\to\RR\varphi)_{t\in\R}$ (bi)-H\"older-conjugated to the reparametrization of $\sf g$ by $\varphi\circ \LL_{\t,\rho}.$
\end{cor1}

\begin{defi1} The function $\LL_{\t,\rho}$ will be referred to as \emph{the Ledrappier potential} of $\rho.$ The flow $\phi^\varphi$ will be called the $\varphi$-\emph{refraction flow of $\rho$}.  
\end{defi1}

\subsection{The $\t$-limit cone}\label{cerca} We mimick some celebrated results by Benoist \cite{limite} for Zariski dense subgroups and $\t=\simple.$

\begin{lema1}[{Benoist \cite[Proposition 5.1]{Benoist-HomRed}}] For every compact set $L\subset\sf G$ there exists a compact set $H\subset \E$ such that for every $g\in \sf G$ one has $\cartan(LgL)\subset\cartan(g)+H.$ 
\end{lema1}

Let us also denote by $\cartan_\t=p_\t\circ\cartan.$ 

\begin{prop1}\label{cerquita}Let $\rho:\G\to \sf G$ be a $\t$-Anosov representation, then there exists a compact set $D\subset \E_\t$ such that $\cartan_\t\big(\rho(\G)\big)\subset \lambda_\t\big(\rho(\G)\big)+D.$ 
\end{prop1}

\begin{proof} As $\G$ is finitely generated and word-hyperbolic, there exist $\kappa>0$ and two elements $u,v\subset\G$ such that for every non-torsion $\g\in\G$ there exists $f\in \{u,v\}$ such that $f\g$ verifies $$d_{\bord\G}\big((f\g)^+,(f\g)^-\big)>\kappa.$$ As $\rho$ is $\t$-Anosov, the above equation implies the element $\rho(f\g)$ is $(r,\eps)$-proximal on $\t$ for some $r$ only depending on $\kappa.$ By Proposition \ref{proxCartan} one has $$\big\|\cartan_\t\big(\rho(f\g)\big)-\lambda_\t\big(\rho(f\g)\big)\big\|\leq K,$$ for some $K$ only depending on $\kappa$ and $\rho.$ We consider the compact set $H$ from the Lemma above applied to $L=\rho(\{u^{-1},v^{-1}\})$ and we let $D:=p_\t(H)+B(0,K).$
\end{proof}

We will mainly use the following direct consequence:

\begin{cor1}\label{finito} If $\varphi\in\inte\conodual$ then the exponential rate $$\delta^\varphi:=\limsup_{t\to\infty}\frac1t\log\#\big\{\g\in\G:\varphi\big(\cartan(\g_\rho)\big)\leq t\big\}<\infty.$$
\end{cor1}

\begin{proof} If $\sroot\in\t$ then, since both intersections $\ker\peso_\sroot\cap\cone_{\t,\rho}$ and $\ker\varphi\cap\cone_{\t,\rho}$ vanish (the first one always does, the second one by the assumption on $\varphi$) the function $\varphi/\peso_\sroot$ is bounded  below away from zero on $\cone_{\t,\rho}.$ By Proposition \ref{cerquita} there exist positive $c$ and $C$ such that for all hyperbolic $\g\in\G$ one has $$\varphi\big(\cartan(\g_\rho )\big)\geq c\peso_\sroot\big(\cartan(\g_\rho) \big)-C.$$  Lemma \ref{aFinito} gives then the desired result.\end{proof}

\subsection{Patterson-Sullivan Theory along the Anosov roots: existence}\label{PS1} In this section we will construct, for each $\varphi\in\inte\conodual$ a $\bb\varphi$-Patterson-Sullivan measure\footnote{The $\t$-Anosov property is not really used until the uniqueness corollary, the existence presented here works for any discrete group whose limit cone on $\E$ does not intersect any wall associated to $\t$ and replacing $\xi^\t(\bord\G)$ by $$\bigcap_{n\in\N}\overline{\{U_\t(g):g\in\grupo \textrm{ with }\min_{\sroot\in\t}\sroot(\cartan(g))\geq n\}}.$$}. The procedure is standard and follows the original idea by Patterson.

We begin by considering the Dirichlet series $$\Po^\varphi(s)=\sum_{\g\in\G} q^{-s\varphi\big(a(\g_\rho)\big)}.$$ It is convergent for every $s>\delta^\varphi$ and divergent for every $s<\delta^\varphi.$ As it is customary when constructing Patterson-Sullivan measures, we can assume throughout this subsection that $\Po^\varphi(\delta^\varphi)=\infty,$ otherwise one would consider the series $$s\mapsto \sum_{\g\in\G} h\Big(\varphi\big(\cartan(\g_\rho )\big)\Big)q^{-s\varphi\big(\cartan(\g_\rho)\big)}$$ for some real function $h$ defined, for example, as in Quint \cite[Lemma 8.5]{quint1}.

For  $s>\delta^\varphi$ consider the probability measure on $\cal F_\t$ defined by $$\nu_s=\frac1{\Po^\varphi(s)}\sum_{\g\in\G}q^{-s\varphi\big(\cartan(\g_\rho)\big)}\dirac_{U_\t\big(\g_\rho \big)}.$$

\begin{lema1}\label{eps} For every $\eta\in\G$ the signed measure 

$$\eps(\eta,s)=(\eta_\rho)_*\nu_s-\frac1{\Po^\varphi(s)}\sum_{\g\in\G}q^{-s\varphi\big(\cartan(\g_\rho)\big)}\dirac_{U_\t\big(\eta_\rho\g_\rho\big)}$$ weakly converges to $0$ as $s\searrow \delta^\varphi.$

\end{lema1}

This is a standard argument that can be found, for example, in Pozzetti-S.-Wienhard \cite[Lemma 5.11]{PSW1}.

\begin{proof} It is sufficient to check the convergence for continuous functions. If $f:\cal F_\t\to\R$ is continuous then $$|\eps(\eta,s)(f)|\leq\frac1{\Po^\varphi(s)}\sum_{\g\in\G}q^{-s\varphi\big(\cartan(\g_\rho )\big)}\Big|f\big(\eta_\rho U_\t(\g_\rho)\big)-f\big(U_\t(\eta_\rho\g_\rho)\big)\Big|.$$ By Lemma \ref{lemma1} and uniform continuity of $f$ the convergence follows. \end{proof}

\begin{lema1}\label{eqCocyclo} Let $\nu^\varphi$ be any weak-star limit of $\nu_s$ when $s\searrow \delta^\varphi.$ Then the support of $\nu^\varphi$ is contained in $\xi^\t(\bord\G).$ Moreover, for every $\eta\in\G$ one has $$\frac{d\rho(\eta)_*\nu^\varphi}{d\nu^\varphi}(x)=q^{-\varphi\Big(\bus_\t\big(\eta^{-1}_\rho,x\big)\Big)}.$$
\end{lema1}

\begin{proof} The first statement follows at once from Lemma \ref{tubo} since we assumed $\Po^\varphi(\delta^\varphi)=\infty.$ For the second statement, consider a sequence $s_k\searrow\delta^\varphi$ such that $\nu_{s_k}\to\nu^\varphi.$ One then has \begin{alignat*}{2}\rho(\eta)_*\nu^{s_k} & =\eps(\eta,s_k)+\frac1{\Po^\varphi(s_k)}\sum_{\g\in\G}q^{-s_k\varphi\big(\cartan(\g_\rho )\big)}\dirac_{U_\t(\eta_\rho\g_\rho)}\\ &=\eps(\eta,s_k)+\frac1{\Po^\varphi(s_k)}\sum_{\g\in\G}q^{-s_k\varphi\big(\cartan(\eta^{-1}_\rho\g_\rho)\big)}\dirac_{U_\t(\g_\rho)}\\ & =\eps(\eta,s_k)+\frac1{\Po^\varphi(s_k)}\sum_{\g\in\G}q^{-s_k\varphi\big(\cartan(\eta^{-1}_\rho\g_\rho)-\cartan(\g_\rho )\big)}q^{-s_k\varphi\big(\cartan(\g_\rho )\big)}\dirac_{U_\t(\g_\rho)}\\ & = \eps(\eta,s_k)+\frac1{\Po^\varphi(s_k)}\sum_{\g\in\G}q^{-s_k\varphi\Big(\bus_\t\big(\eta^{-1}_\rho,U_\t(\g_\rho )\big)+\eps'(\eta,\g)\Big)}q^{-s_k\varphi\big(\cartan(\g_\rho )\big)}\dirac_{U_\t(\g_\rho)} \end{alignat*} 

\noindent
where, by Quint's Lemma \ref{lQ} and the fact that $\varphi\circ p_\t=\varphi,$ one has $\eps'(\eta,\g)\to0$ as $\min_{\sroot\in\t}\sroot\big(\cartan\big(\g_\rho \big)\big)\to\infty.$ Taking limit as $s_k\searrow\delta^\varphi$ one has, since we assumed $\Po^\varphi(\delta^\varphi)=\infty,$ that only elements $\g\in\G$ with arbitrary big $|\g|$ count in the sum. Since $\rho$ is $\t$-Anosov, this is equivalent to considering elements $\g\in\G$ such that $$\min_{\sroot\in\t}\sroot\big(\cartan(\g_\rho) \big)$$ is arbitrary big. The result then follows as $\eps(\eta,s_k)\to0$ by the Lemma above and $\eps'(\eta,\g)$ is arbitrary small.\end{proof}

Since Assumption \ref{assuC} holds for $\bb{}$ (Lemma \ref{beta<infty}), \S\,\ref{PattersonC} applies to give:

\begin{cor1}\label{existe}For every $\varphi\in\inte\conodual$ there exists a $\bb\varphi$-Patterson-Sullivan measure $\mu^\varphi:=(\xi^\t)_*\nu^\varphi$ of exponent $\delta^\varphi.$ Such a measure is ergodic and moreover one has $\delta^\varphi=\h_\varphi.$ If $\psi\in\inte\conodual$ is such that $\mu^\psi\ll\mu^\varphi$ then for every hyperbolic $\g\in\G$ one has $$\h_\varphi\varphi\big(\lambda_\t(\g_\rho )\big)=\h_\psi\psi\big(\lambda_\t(\g_\rho )\big)$$ and, in particular, $\mu^\psi=\mu^\varphi.$ 
\end{cor1}

The above corollary was previously stablished by Dey-Kapovich \cite[Main Theorem]{Dey-Kapovich} for real algebraic groups, $\ii$-invariant functionals $\varphi\in\inte(\a^+)^*$ and $\ii$-invariant subsets $\t.$ The equality $\delta^\varphi=\h_\varphi$ can also be found in Glorieux-Monclair-Tholozan {\cite[Theorem 2.31 (2)]{GMT}} for real groups.

\begin{obs*}\label{existenceAssumption}
We conclude by remarking that, for $\varphi\in\inte\conodual,$ the existence assumptions of \S\,\ref{PattersonC} are guaranteed for $\bb\varphi.$ Indeed Proposition \ref{existe} states the existence of a Patterson-Sullivan measure $\mu^\varphi$ for $\bb\varphi.$ On the other hand the cocycle $$\bd{}(\g,x)=\ii\bus_{\ii\t}\big(\g_\rho ,\xi^{\ii\t}(x)\big)$$ is dual to $\bb{}$ and moreover, from equation (\ref{forGr}), the function $[\cdot,\cdot]_\varphi:\bord^2\G\to\R$ \begin{equation}\label{prodGr}[x,y]_\varphi=\varphi\Big(\Gr_\t\big(\xi^{\ii\t}(x),\xi^\t(y)\big)\Big)\end{equation} is a  Gromov product for the pair $(\bd{\varphi},\bb{\varphi}).$ Finally, exchanging $\t$ with $\ii\t,$ Proposition \ref{existe} provides a Patter\-son-Sullivan measure for $\bd\varphi.$ We can thus apply the results from \S\,\ref{directions} and \S\,\ref{fibered}.

\end{obs*}


\subsection{Cartan's basins have controlled overlaps}\label{buenC}\label{buencubrimiento} The job of understanding the overlaps of Cartan's bassins for Anosov representations has been carried out in Pozzetti-S.-Wienhard \cite{PSW2}. The idea is to compare the Cartan's basins of elements $\g_\rho ,$ for hyperbolic $\g\in\G,$ with the \emph{coarse cone type} of $\g.$ 

Let $c_0,c_1$ be positive and $I\subset\Z$ an interval, then a $(c_0,c_1)$-\emph{quasigeodesic} is a sequence $\{\alpha_i\}_{i\in I}\in\G$  such that for every pair $j,l$ in the interval $I$ one has $$\frac1{c_0}|j-l|-c_1\leq d_\G(\alpha_j,\alpha_l)\leq c_0|j-l|+c_1.$$ The  \emph{coarse cone type at infinity} of $\g\in\G$ consists of endpoints on $\bord\G$ of quasi geodesic rays based at $\g^{-1}$ passing through the identity (see Figure \ref{CCT}):   
\begin{alignat*}{2} \cono^{c_0,c_1}_\infty&(\g) =  \\ & \Big\{[\{\alpha_j\}_0^\infty]\in\bord\G:\,  \{\alpha_i\}_0^\infty \text{ is a $(c_0,c_1)$-quasi-geodesic with } \alpha_0=\g^{-1}, e\in\{\alpha_j\}\Big\}.\end{alignat*}

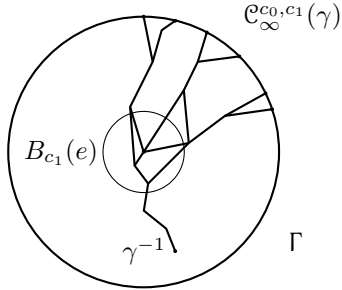
\begin{figure}[h]
\centering
\begin{tikzpicture}[scale = 0.6]
\draw [thick] circle [radius = 3];
\draw [fill] circle [radius = 0.03];	
\draw  circle [radius = 0.9];
\node [left] at (-0.8,0) {$B_{c_1}(e)$};
\draw [thick] (0.7,-2.2) -- (0.5,-1.7) -- (0.004,-1.3) -- (0.1,-0.7) -- (-0.2,-0.3) -- (0,0); 
\node [left] at (0.7,-2.2) {$\g^{-1}$}; \draw [fill] (0.7,-2.2) circle [radius = 0.03]; 
\draw [thick] (1,0.2) -- (0.1,-0.7);
\draw [thick] (-0.2,-0.3) -- (-0.3,1);
\draw [thick] (0,0) -- (-0.3,1) -- (0.2,2) -- (0,3); 
\draw [thick] (0.2,2) -- (0.4, 2.7) -- (0.7,2.91); 
\draw [thick] (1,0.2) -- (0.89,1.37);
\draw [thick] (1.8,0.8) -- (2.7,1.3);
\draw [thick] (0,0) -- (1,0.2) -- (1.8,0.8) -- (2.846,0.948);
\draw [fill] (0.7,2.91) circle [radius = 0.03];
\draw [fill] (2.7,1.3) circle [radius = 0.03];
\draw [fill] (2.12,2.12) circle [radius = 0.03];
\draw [fill] (0,3) circle [radius = 0.03];
\draw [fill] (2.846,0.948) circle [radius = 0.03];	
\draw [thick] (0,0) -- (0.89,1.37) -- (1.2,2) -- (2.12,2.12);
\draw [thick] (1.2,2) -- (1.4,2.65);
\draw [fill] (1.4,2.65) circle [radius = 0.03];
\node [right] at (3,-2) {$\G$};
\node [right] at (2,3) {$\cono^{c_0,c_1}_\infty(\g)$};
\end{tikzpicture}
\caption{The coarse cone type at infinity, the black broken lines are $(c_0,c_1)$-quasi-geodesics.}\label{CCT}
\end{figure}

{Pozzetti-.S.-Wienhard \cite[Prop. 3.3]{PSW2}} together with Bochi-Potrie.S. \cite[Lemma 2.5]{BPS} (see also Pozzetti-S.-Wienhard \cite[Proposition 3.3]{PSW1}) give the following. The last statement can be found on {Pozzetti-S.-Wienhard \cite[Proposition 3.5]{PSW2}}:

\begin{prop1}[{Pozzetti-.S.-Wienhard \cite[Prop. 3.3]{PSW2}}]\label{CCTprop} For a given $\alpha>0$ there exist $c_0,c_1,$ depending on $\alpha$ and the domination constants of $\rho,$ such that for every hyperbolic $\g\in\G$ one has
$$(\xi^\t)^{-1}\big(B_{\theta,\alpha}(\g_\rho)\big)\subset \cono^{c_0,c_1}_\infty(\g).$$ Reciprocally, there exists $\alpha';$ only depending on $c_0,c_1$ and the domination constants of $\rho,$ such that $$\cono^{c_0,c_1}_\infty(\g)\subset(\xi^\t)^{-1}\big(B_{\theta,\alpha'}(\g_\rho)\big).$$ There exists then $N\in\N,$ only depending on $c_0,c_1$ and the domination constants of $\rho$ such that, for all $t\in\N$ the family $$\cal U_t=\big\{\g_\rho  B_{\t,\alpha}\big(\g_\rho \big):t\leq|\g|\leq t+1\big\}$$ is an open covering of $\xi^\t(\bord\G)$ and such that every element $\xi^\t(x)$ belongs to at most $N$ elements of the covering $\cal U_t.$
\end{prop1}

\subsection{Sullivan's shadow lemma}\label{Sullivansombra}

We establish now a version of Sullivan's shadow Lemma.

\begin{lema1}\label{sombralemma} Consider $\varphi\in(\E_\t)^* $ and let $\mu$ be a $\bb\varphi$-Patterson-Sullivan measure of exponent $\delta.$ Let $\nu=\xi^{\t}_*\mu,$ then given $\alpha>0$ there exist constants $C,$ $C'$ and $L\in\N$ such that for every $\g\in\G$ with $|\g|\geq L$ one has $$q^{-\delta\cdot\varphi\big(\cartan(\g_\rho)\big)}C'
\leq\nu\big(\g_\rho B_{\t,\alpha}(\g_\rho )\big)\leq Cq^{-\delta\cdot\varphi\big(\cartan(\g_\rho)\big)}.$$
\end{lema1}

\begin{proof} It suffices to stablish that there exist $\alpha$ and $\kappa>0$ such that for all large enough $\g\in\G$ one has $\nu\big(B_{\t,\alpha}(\g_\rho)\big)\geq\kappa.$ Indeed, using this fact the Lemma follows from the defining Equation \eqref{defining} and Equation \eqref{comparision-shadow}. 

In order to stablish the desired lower bound we suppose by contradiction that there exists $\alpha_n\to\infty,$ $\g_n\to\infty$ such that $\nu\big(B_{\t,\alpha_n}((\g_n)_\rho)\big)\to0$ as $n\to\infty.$ We can extract then a subsequence $(\g_{n_k})$ such that $$U_{\ii\t}\big((\g_{n_k}^{-1})_\rho\big)\to Y\in\cal F_{\ii\t},\ k\to\infty.$$ Moreover, since $\rho$ is $\t$-Anosov, Lemma \ref{tubo} guarantees that $Y=\xi^{\ii\t}(y)$ for some $y\in\bord\G.$ Also, since $\nu\big(B_{\t,\alpha_{n_k}}((\g_{n_k})_\rho)\big)\to0$ and $\alpha_{n_k}\to\infty$ we get that the complement $$\big(B_{\t,\alpha_{n_k}}((\g_{n_k})_\rho)\big)^c=\{X\in\cal F_\t:\exists\sroot\in\t\ \textrm{s.t.}\ \peso_\sroot\Gr_\t(U_{\ii\t}((\g_{n_k}^{-1})_\rho),X)\leq-\alpha_n\big\}$$ converges to the subset of $\cal F_\t$ $$\big\{X\in\cal F_\t:(X,\xi^{\ii\t}(y))\notin\posgen_\t\big\},$$ and that this subset has total $\nu$-mass. Since the support of $\nu$ is contained in $\xi^\t(\bord\G)$ and the equivariant maps are transverse (Proposition \ref{conical}), one has that $$\{\xi^\t(y)\}=\big\{\xi^\t(x):(\xi^\t(x),\xi^{\ii\t}(y))\notin\posgen_\t\big\}$$ has total $\nu$-mass. However, considering $\g\in\G$ with $\g y\neq y$ we get, since \begin{equation}\label{defining}\frac{d(\g_\rho) _*\nu}{d\nu}(\cdot)=q^{-\delta\cdot\varphi\big(\bus_\vt(\g_\rho^{-1},\cdot)\big)},\end{equation} that $\nu\{\xi^1(\g y)\}>0,$ contradicting that $\{\xi^\t(y)\}$ has total $\nu$-mass.\end{proof}

\begin{cor1} For every $\varphi\in\inte\conodual$ one has  ${\displaystyle\sum_{\g\in\G}q^{-\delta^\varphi\varphi\big(\cartan(\g_\rho )\big)}=\infty.}$
\end{cor1}

\begin{proof} We apply Sullivan's shadow Lemma \ref{sombralemma} to the measure $\nu^\varphi$ of Lemma \ref{eqCocyclo}. Indeed, considering the coverings of $\xi^\t(\bord\G)$ given by Proposition \ref{CCTprop} one has $$1=\nu^\varphi\big(\xi^\t(\bord\G)\big)\leq \sum_{t\leq|\g|\leq t+1}\nu^\varphi\big(\g_\rho B_{\t,\alpha}(\g_\rho )\big)\leq C\sum_{t\leq|\g|\leq t+1}q^{-\delta^\varphi\varphi\big(\cartan(\g_\rho )\big)}$$ for all large enough $t,$ giving divergence of the desired series.\end{proof}

\subsection{Patterson-Sullivan Theory along the Anosov roots: surjectivity}\label{PS2} We prove here surjectivity of the map $\varphi\mapsto\mu^\varphi$ defined in \S\,\ref{PS1}. 

The following proposition should be compared with Pozzetti-S.-Wienhard \cite[Theorem 5.14]{PSW2}, where a similar result is obtained for measures on the flag space $\cal F_\vt,$ for $\vt$ not necessarily equal to $\t$ but assuming that $\t\cap\vt\neq\emptyset.$

\begin{prop1}\label{jaco}Consider $\varphi\in(\E_\t)^*.$ If there exists a $\bb\varphi$-Patterson-Sullivan measure $\mu$ of exponent $\delta$ then $\varphi\in\inte(\cone_{\t,\rho})^*,$ $\delta= \delta^\varphi$ and $\mu=\mu^\varphi.$
\end{prop1}

\begin{proof} 
We let $\nu=\xi^\t_*\mu.$ Using Proposition \ref{CCTprop} we get a family of covering $\cal U_t$ with bounded overlap. In combination with Lemma \ref{sombralemma} one has for $t$ large enough that $$1=\nu(\xi^\t(\bord\G))\geq K\sum_{\g:t\leq|\g|\leq t+1} e^{-\delta\varphi\big(\cartan(\g_\rho)\big)},$$ for some constant $K>0.$ This is to say, there exists $\kappa>0$ such that for all $t\in\R_+$ large one has $\sum_{\g:t\leq|\g|\leq t+1}e^{-\delta \varphi\big(\cartan(\g_\rho)\big)}\leq\kappa,$ which gives in turn that $$\sum_{\g:|\g|\leq t}e^{-\delta\varphi\big(\cartan(\g_\rho)\big)}\leq\kappa t.$$ A standard argument (using for example \S\,\ref{proxBenoist}) permits to replace Cartan projections with Jordan projections giving $$\sum_{[\g]:p([\g])\leq t}e^{-\delta\varphi\big(\lambda(\g_\rho)\big)}=\sum_{[\g]:p([\g])\leq t}e^{-\ell_{\delta\LL_{\t,\rho}^\varphi}(\g)}\leq\kappa' t,$$ for a suitable $\kappa',$ where $p(\g)$ is the $\sf g$-period of the periodic orbit associated to $[\g],$ and $\LL_{\t,\rho}^\varphi$ is the Ledrappier potential of $\bb\varphi.$ Formula \eqref{PrPer} for the pressure function gives then $$P(-\delta\LL_{\t,\rho}^\varphi)\leq0.$$

Consequently, Lemma \ref{positiva} gives that $\LL_{\t,\rho}^\varphi$ is Liv\v sic-cohomologous to a positive function, this is to say, $\varphi\in\inte\conodual.$ Finally, since Remark \ref{existenceAssumption} guarantees the existence assumptions of \S\,\ref{PattersonC} for $\bb\varphi,$ the remaining two equalities in the statement follow from Corollary \ref{entropiamax}.\end{proof}

\subsection{The critical hypersurface parametrizes Patterson-Sullivan measures}\label{Q2}

By Lemma \ref{arbitraryG} Assumption \ref{assuC} holds for $\bb{}$ and thus \S\,\ref{entropy1} applies. Define the $\t$-\emph{critical hypersurface}, resp. $\t$-\emph{convergence domain}, of $\rho$ by \begin{alignat*}{2}\cal Q_{\t,\rho}:=\cal Q_{\bb{}}  & =\Big\{\varphi\in\inte\conodual:\h_\varphi=1\Big\},  \\  \cal D_{\t,\rho}:=\cal D_{\bb{}}  & =\Big\{\varphi\in\inte\conodual:\h_\varphi\in(0,1)\Big\}  \\ & = \Big\{\varphi\in(\E_\t)^*:\sum_{[\g]\in[\G]}e^{-\varphi\big(\lambda(\g_\rho)\big)}<\infty\Big\}.\end{alignat*} Moreover, by Corollary \ref{existe} one has $\delta^\varphi=\h_\varphi$ so one has the equalities 

\begin{alignat*}{2}\cal Q_{\t,\rho}  & =\Big\{\varphi\in\inte\conodual:\delta^\varphi=1\Big\},\\\cal D_{\t,\rho} &  =\Big\{\varphi\in\inte\conodual:\delta^\varphi\in(0,1)\Big\} \\ & = \Big\{\varphi\in(\E_\t)^*:\sum_{\g\in\G}e^{-\varphi\big(\cartan(\g_\rho)\big)}<\infty\Big\}.\end{alignat*}

For $\varphi\in\cal Q_{\t,\rho}$ we consider the \emph{dynamical intersection} map $\II_\varphi=\II_\varphi^{\bb{}}:(\E_\t)^*\to\R,$ associated to the cocycle $\bb{}$ as in \S\,\ref{entropy1} and defined by 

$$\II_\varphi(\psi)=\II_\varphi^{\bb{}}(\psi)=\lim_{t\to\infty}\frac1{\#\sf R_t(\varphi)}\sum_{\g\in \sf R_t(\varphi)}\frac{\psi(\lambda(\g_\rho))}{\varphi(\lambda(\g_\rho))},$$ where $\sf R_t(\varphi)=\big\{\g\in\G\textrm{ hyperbolic}:\varphi(\lambda(\g_\rho))\leq t\}.$ Let $\ann(\cone_{\t,\rho})$ be the annihilator of the $\t$-limit cone and denote by $$\pi^\t_\rho:(\E_\t)^*\to(\E_\t)^*/\ann(\cone_{\t,\rho})$$ the quotient projection. As before, the map $\II^{\bb{}}$ is also well defined on $\pi^\t_\rho(\cal Q_{\t,\rho})\times (\E_\t)^*/\ann(\cone_{\t,\rho}).$

The following corollary was previously stablished in S. \cite{exponential} for $\K=\R$ and Zariski-dense Anosov representations of closed negatively curved manifolds (the equality $\sf T_\varphi\pi^\t_\rho\big(\cal Q_{\t,\rho}\big)=\ker\II_\varphi$ is new). 

\begin{cor1}\label{e=1} The sets $\cal Q_{\t,\rho}$ and $\pi^\t_\rho\big(\cal Q_{\t,\rho})$ are closed co-dimension-one analytic sub-manifolds, the latter bounds the strictly convex set $\pi^\t_\rho\big(\cal D_{\t,\rho}\big).$ The map $$\varphi\mapsto \sf T_\varphi\pi^\t_\rho\big(\cal Q_{\t,\rho}\big)=\ker\II_\varphi$$ is an analytic diffeomorphism between $\pi^\t_\rho\big(\cal Q_{\t,\rho}\big)$ and directions in the relative interior of $\cone_{\t,\rho}.$ \end{cor1}

We now prove the following:

\begin{prop1}\label{corr} The map $\varphi\mapsto\mu^\varphi$ is an analytic homeomorphism from the manifold $\pi^\t_\rho\big(\cal Q_{\t,\rho}\big)$ to the space of Patterson-Sullivan measures supported on $\xi^\t(\bord\G).$
\end{prop1}

\begin{proof} By uniqueness in Corollary \ref{existe} the map $\varphi\mapsto\mu^\varphi$ is well defined and injective. Regularity follows from Remark \ref{equiA} and analytic variation of equilibrium states (Theorem \ref{equilibrios}). Surjectivity follows from Proposition \ref{jaco}.\end{proof}



Proposition \ref{corr} was previously stablished by Lee-Oh \cite[Theorem 1.3]{HoLee} for $\K=\R$ and $\simple$-Anosov Zariski-dense representations. The convergence domain $\cal D_{\simple,\rho}$ is dual to Quint's \emph{growth indicator function} \cite{quint2}.

\begin{obs} Observe that, by definition, a $\bb\varphi$-Patterson-Sullivan measure has its support on $\bord\G,$ and thus on $\xi^\t(\bord\G)$ when pushed to $\cal F_\t.$ One could more generally study measures on $\cal F_\t$ verifying \begin{equation}\label{defQ}\frac{d(\g_\rho) _*\nu}{d\nu}(\cdot)=q^{-\delta\cdot\varphi\big(\bus_\vt(\g_\rho^{-1},\cdot)\big)},\end{equation} without imposing conditions on their support. Such measures exist, for example the $\sf K$-invariant measure on $\cal F_\t,$ but their exponent is too large. The question would be totally settled if the following had affirmative answer: Is the support of a measure verifying \eqref{defQ} with $\delta=\delta^\varphi$  necessarily contained on $\xi^\t(\bord\G)$?
\end{obs}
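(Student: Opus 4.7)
The natural plan is to decompose the measure into a piece carried by $\xi^\t(\bord\G)$ (treated by Proposition \ref{jaco}) and a putative off-limit-set piece that we try to show is zero. Let $\nu$ be a probability on $\cal F_\t$ satisfying \eqref{defQ} with $\delta=\delta^\varphi$, and write $\nu=\nu_0+\nu_1$ where $\nu_0=\nu|_{\xi^\t(\bord\G)}$. Since $\xi^\t(\bord\G)$ is closed and $\rho(\G)$-invariant, both summands satisfy \eqref{defQ}, and Proposition \ref{jaco} applied to the normalization of $\nu_0$ (when nonzero) yields $\nu_0=c\cdot\mu^\varphi$ for some $c\in[0,1]$. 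The remaining work is to rule out $\nu_1\neq 0$.

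Assume $\nu_1\neq 0$ and normalize to a probability $\bar\nu_1$ supported on $\cal F_\t\smallsetminus\xi^\t(\bord\G)$. The strategy is to re-run the contradiction step inside the proof of Lemma \ref{sombralemma}: if there exist sequences $\g_n\to\infty$ in $\G$ and $\alpha_n\to\infty$ with $\bar\nu_1(B_{\t,\alpha_n}(\rho(\g_n)))\to 0$, then, passing to a subsequence and invoking Lemma \ref{tubo}, $U_{\ii\t}(\rho(\g_n^{-1}))\to Y=\xi^{\ii\t}(y)$ for some $y\in\bord\G$, so $\bar\nu_1$ has full mass on the non-transverse subvariety $N_Y:=\{X\in\cal F_\t : (X,Y)\notin\posgen_\t\}$. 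By \eqref{defQ}, $(\rho(\eta))_*\bar\nu_1$ is mutually absolutely continuous with $\bar\nu_1$ for every $\eta\in\G$, hence $\bar\nu_1$ has full mass on each translate $\rho(\eta)N_Y=N_{\xi^{\ii\t}(\eta y)}$. Intersecting along the countable orbit $\G\cdot y$, $\bar\nu_1$ has full mass on $\bigcap_{\eta\in\G}N_{\xi^{\ii\t}(\eta y)}$. Under Zariski-density of $\rho$, $\xi^{\ii\t}(\G\cdot y)$ is Zariski-dense in $\cal F_{\ii\t}$ (by Benoist \cite{limite}), while for each $X\in\cal F_\t$ the set of flags non-transverse to $X$ is a proper Zariski-closed subset of $\cal F_{\ii\t}$; any $X$ in the intersection would confine $\xi^{\ii\t}(\G\cdot y)$ to a proper subvariety, a contradiction. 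Hence the intersection is empty and $\bar\nu_1$ cannot exist.

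The main obstacle is the alternative branch of the dichotomy: one must verify that if $\bar\nu_1$ lives off $\xi^\t(\bord\G)$ then the Cartan-basin mass $\bar\nu_1(B_{\t,\alpha}(\rho(\g)))$ must indeed degenerate along some sequence $\g_n\to\infty$. For $\nu=\mu^\varphi$ this is automatic because its support is $\xi^\t(\bord\G)$ and only a single limit-set flag fails to be transverse to $Y$. For off-limit-set measures, however, $B_{\t,\alpha}(\rho(\g))$ exhausts a large open subset of $\cal F_\t$ as $|\g|\to\infty$, and the uniform lower bound $\bar\nu_1(B_{\t,\alpha}(\rho(\g)))\geq\kappa$ is a priori perfectly compatible with the conformal rule. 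Resolving this requires genuinely new input: a natural direction is to exploit the mixing of the directional flow $\df^\varphi$ on $\rho(\G)\backslash(\posgen_\t\times\E_\t)$ (Corollary \ref{CorergoDic}) by constructing out of $\bar\nu_1$ a quasi-invariant measure on an appropriate extension of $\cjto$ and then arguing that this measure is incompatible with the Bowen--Margulis ergodic structure unless $\bar\nu_1=0$. A secondary obstacle is removing the Zariski-density and $\K=\R$ hypotheses required in paragraph two by Benoist's density theorem (Theorem \ref{densidad}); outside that setting the limit set need not be Zariski-dense in $\cal F_{\ii\t}$ and the intersection step would have to be recast relatively to the Zariski closure of $\rho(\G)$.
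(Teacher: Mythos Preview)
The statement you are addressing is not a theorem with a proof in the paper; it is a \emph{remark} whose final sentence explicitly poses an \emph{open question}. The paper does not claim, and does not prove, that every measure on $\cal F_\t$ satisfying \eqref{defQ} with $\delta=\delta^\varphi$ is supported on $\xi^\t(\bord\G)$. The only assertions in the remark that carry content are (i) the definitional observation that a $\bb\varphi$-Patterson--Sullivan measure lives on $\bord\G$ by construction, and (ii) the claim that the $\sf K$-invariant measure on $\cal F_\t$ satisfies \eqref{defQ} for some (too large) exponent; the latter is a short computation with the Iwasawa decomposition that the paper does not spell out. There is nothing further to prove.

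Your proposal is therefore not a proof of the stated remark but an attempted \emph{resolution of the open question}. As such it cannot be compared to ``the paper's own proof,'' because there is none. On its own merits, your sketch is honest about its gap: the crucial step---that an off-limit-set conformal measure $\bar\nu_1$ must have vanishing Cartan-basin mass along some sequence $\g_n\to\infty$---is exactly what fails to follow from the argument in Lemma \ref{sombralemma}, since for a measure not supported on $\xi^\t(\bord\G)$ the complements of the basins $B_{\t,\alpha}(\rho(\g_n))$ need not concentrate on a single point. Without that step, the contradiction via Zariski-density never gets off the ground. Your suggestion to bring in the mixing of $\df^\varphi$ is a reasonable direction to explore, but it is speculative and would constitute new research beyond the scope of the paper.
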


\subsection{Variation of the critical hypersurface}

We record the following consequence of Bridgeman-Canary-Labourie-S. \cite[\S\,6.3]{pressure}

\begin{cor1}[{Bridgeman-Canary-Labourie-S. \cite{pressure}}] Let $\{\rho_u:\G\to\sf G\}_{u\in D}$ be an analytic family of $\t$-Anosov representations. Then Liv\v sic-cohomology class of the Ledrappier potential $\LL_{\bb{}^{\rho_u}}:\widetilde{\sf U\G}\to\E_\t$ associated varies analytically with $u.$
\end{cor1}

Consequently we can apply Corollary \ref{Qanaly} to obtain:

\begin{cor1}Let $\{\rho_u:\G\to\sf G\}_{u\in D}$ be an analytic family of $\t$-Anosov representations, then the critical hypersurface $\cal Q_{\t,\rho_u}$ varies analytically (on compact sets of $\E_\t$) with the representation $u.$

\end{cor1}

\subsection{Consequences of the skew-product structure}\label{fibered2}  Consider $\varphi\in\inte\conodual.$ By Remark \ref{existenceAssumption} we can freely apply results from \S\,\ref{directions} and \S\,\ref{fibered} to the cocycle $\bb\varphi.$

 Let $\sf u_\varphi=\sf T_{\h_\varphi\varphi}\cal Q_{\t,\rho}\in\P(\cone_{\t,\rho})$ be the \emph{growth direction of $\varphi$}. By \S\,\ref{entropy1} the half line $\sf u_\varphi\cap\cone_{\t,\rho}$ lies in the relative interior of $\cone_{\t,\rho}$ (and every direction in this relative interior is obtained in this fashion).

Consider the $\varphi$-\emph{Bowen-Margulis} measure $\BM^\varphi$ on $\cjtot,$ defined as the induced on the quotient by \begin{equation}\label{forBM}e^{-\delta^\varphi[\cdot,\cdot]_\varphi}\bar\mu^\varphi\otimes\mu^\varphi\otimes d\Leb.\end{equation}

Consider $u_\varphi\in\sf u_\varphi$ with $\varphi(u_\varphi)=1$ and denote by $\df^\varphi=\big(\df^\varphi_t:\cjtot\to\cjtot\big)_{t\in\R}$ the \emph{directional flow}, induced on the quotient of $$t\cdot(x,y,v)=(x,y,v-tu_\varphi).$$

The ergodic dichotomy from \S\,\ref{ergoDic} gives then:

\begin{thm1}\label{dicoAnosov}Assume $\K=\R$ and $\rho$ is Zariski-dense, and let $\varphi\in\inte\big(\cone_{\t,\rho}\big)^*.$ If $|\t|\leq2$ then the directional flow $\df^\varphi$ is ergodic w.r.t $\BM^\varphi,$ in particular $\cal K(\df^\varphi)$ has total mass. If $|\t|\geq4$ then $\cal K(\df^\varphi)$ has measure $0.$
\end{thm1}

\begin{proof}The non-arithmeticity assumption for $\bb{}$ holds by Benoist's Theorem \ref{densidad} and thus Corollary \ref{CorergoDic} applies.\end{proof}

\subsection{Directional conical points}\label{medidadico} The present task is to study the set of points on $\bord\G$ that are \emph{conical} in the direction $\sf u_\varphi.$

Consider $y\in\bord\G$ and a sequence $\{\g_n\}\subset\G$ with $\g_n\to y.$ Then we say that $\g_n$ converges \emph{conically} to $y$ if for every $z\in\bord\G-\{y\}$ the sequence $\g_n^{-1}(z,y)$ remains on a compact subset of $\bord^2\G.$ 

\begin{obs*}\label{conica}Equivalently, since any compact subset of $\bord^2\G$ is contained in a compact subset of the form $\big\{(a,b):d_{\bord\G}(a,b)\geq\kappa\big\}$ for a fixed $\kappa,$ one has that $\g_n\to y$ conically if and only if there exists a geodesic ray $\{\alpha_i\}_0^\infty$ on $\G,$ converging to $y,$ such that $\{\g_n\}$ is at bounded Hausdorff distance from $\{\alpha_i\}_0^\infty.$ It follows then the existence of constants, $c_0,c_1$ such that for all $n$ one has \begin{equation}\label{ycono}\g_n^{-1}y\in\cono^{c_0,c_1}_\infty(\g_n).\end{equation}
\end{obs*}

Let us fix an (auxiliary) Euclidean norm on $\E_\t$ and denote by $B(v,r)$ the associated ball of radius $r$ about $v.$ The \emph{tube} of size $r$ about $\sf u_\varphi$ is the tubular neighborhood: $$\tube_r(\sf u_\varphi)=\{v\in\E_\t:B(v,r)\cap\sf u_\varphi\neq\emptyset\}.$$

\begin{defi1}We say that $y\in\bord\G$ is $(r,\varphi)$-\emph{conical} if there exists a conical sequence $\{\g_n\}\subset\G$ converging to $y$ such that for all $n$ $$\cartan_\t\big((\g_n)_\rho \big)\in \tube_r(\sf u_\varphi).$$ We say that $y$ is $\varphi$-conical if it is $(r,\varphi)$-conical for some $r.$ \end{defi1}

Let us denote by $\con{r,\varphi}\subset\bord\G$ the set of $(r,\varphi)$-conical points and by $\con\varphi$ the set of $\varphi$-conical points. We now establish the following dichotomy.

\begin{thm1}\label{thmB1}Assume $\K=\R$ and that $\rho$ is Zariski-dense. If $|\t|\leq2$ then $\mu^\varphi\big(\con\varphi)=1,$ if $|\t|\geq4$ then $\mu^\varphi\big(\con\varphi)=0.$
\end{thm1}

The Theorem follows directly from Theorem \ref{dicoAnosov} and the following proposition. Let us denote by $\pp:\bord^2\G\times\E_\t \to\G\/\big(\bord^2\G\times\E_\t\big)$ the quotient projection.

\begin{prop1}\label{recu-conico} A point $y\in\bord\G$ belongs to $\con\varphi$ if and only if for every pair $(x,v)\in(\bord\G-\{y\})\times\E_\t$ one has $\pp(x,y,v)\in\cal K({\df}^\varphi).$
\end{prop1}

\begin{proof} If $(x,y,v)\in\bord^2\G\times \E_\t$ is such that $y\in\con\varphi,$ then consider $r>0$ and $\g_n\to y$ conically such that $\cartan\big((\g_n)_\rho\big)\in\tube_r(\sf u_\varphi).$ By equation (\ref{ycono}), there exists $\eps$ given by Proposition \ref{CCTprop} (only depending on $c_0$ and $c_1$) such that for all $n$ $$\xi^\t(y)\in(\g_n)_\rho B_{\t,\eps}\big((\g_n)_\rho\big).$$ 

\noindent
Consequently equation (\ref{comparision-shadow}) gives \begin{equation}\label{tutti4}\big\|\bb{}(\g_n ^{-1},y)+\cartan_\t\big(\rho(\g_n)\big)\big\|=\big\|-\bb{}(\g_n,\g_n^{-1}\cdot y)+\cartan_\t\big(\rho(\g_n)\big)\big\|<K_\eps.\end{equation} 

\noindent By assumption $\cartan_\t\big(\rho(\g_n)\big)\in\tube_r(\sf u_\varphi)$ and one finds thus a divergent sequence $t_n\in\R_+$  such that \begin{equation}\label{tutti3}\| \bb{}(\g_n ^{-1},y)+t_nu_\varphi\|<K',\end{equation} for some $K'$ only depending on $r$ and $\eps.$ The sequence $$\df^\varphi_{-t_n}\g_n^{-1}(x,y,v)=\big(\g_n^{-1}x,\g_n^{-1}y,v-\bb{}(\g_n^{-1},y)-t_n u_\varphi\big)$$ is thus contained in $\big\{(z,w)\in\bord^2\G: d_{\bord\G}(z,w)>\kappa\big\}\times B(v,K'),$ for some $\kappa$ only depending on $d_{\bord\G}(x,y),$ in particular $\pp(x,y,v)\in\cal K({\df}^\varphi)$ as desired.

Reciprocally, if $\pp(x_0,y_0,v_0)\in\G\/\big(\bord^2\G\times\E_\t\big)$ belongs to $\cal K({\df}^\varphi),$ let $\BB$ be a bounded open set to which the ${\df}^\varphi$-orbit of $\pp(x_0,y_0,v_0)$ returns to unboundedly. Considering an accumulation point of the orbit points in $\BB$ we can assume that $\BB=\pp(\tilde\BB)$ for some $\tilde \BB$ of the form $$\big\{(z,w)\in\bord^2\G:d_{\bord\G}(z,w)\geq\kappa'\big\}\times B(v,c).$$ We obtain thus divergent sequences $\{\g_n\}\subset\G$ and $\{t_n\}\subset\R^+$ such that for all $n$ \begin{equation}\label{coni}d_{\bord\G}(\g_n^{-1}x_0,\g_n^{-1} y_0)>\kappa'\textrm{ and }\|\bb{}(\g_n^{-1},y_0)+t_nu_\varphi\|\leq K''.\end{equation} 

Considering subsequences we can assume that $\g_n^{-1}x_0\to x_\infty$ and $\g_n^{-1}y_0\to y_\infty.$ Necessarily $x_\infty\neq y_\infty$ since they are at least $\kappa'$ apart. The sequence $\{\g_n\}$ is thus conical, but it is still to be determined whether it converges to $x_0$ or to $y_0.$

Using the last inequality in (\ref{coni}) we deduce, since $t_n\to+\infty,$ that for all $\sroot\in\t$ $$\peso_\sroot\big(\bb{}(\g_n^{-1},y_0)\big)\to-\infty.$$ By definition of $\bb{}$ and the interpretation of the Buseman-Iwasawa cocycle via representations (equation (\ref{busnorma})) one has $\log\big(\|\Fund_\sroot\rho(\g_n^{-1})v\|/\|v\|\big)\to-\infty$ for any non-vanishing $v\in\Xi_{\Fund_\sroot}\big(\xi^\t(\g_n^{-1}y_0)\big),$ or equivalently, as $n\to\infty$ $$\frac{\|\Fund_\sroot\rho(\g_n^{-1})v\|}{\|v\|}\to 0.$$

\noindent We now use a standard linear algebra computation\footnote{\begin{lema1}[{Bochi-Potrie-S. \cite[Lemma A.3]{BPS}}]Let $A\in\GL_d(\R)$ have a gap at $\aa_1,$ then for every $v\in\R^d$ one has $$\frac{\|Av\|}{\|v\|}\geq \|A\|\sin\angle\big(\R\cdot v,U_{d-1}(A^{-1})\big).$$\end{lema1}} to conclude that $$\sin\angle\Big( \Xi_{\Fund_\sroot}\big(\xi^\t(y_0)\big),U_{d-1}\big(\Fund_\sroot(\g_n)_\rho\big)\Big)\to0.$$

\noindent By Lemma \ref{lejosplanos} one concludes that $$U_1\big(\Fund_\sroot\big(\rho(\g_n)\big)\big)\to\Xi_{\Fund_\sroot}\big(\xi^\t(y_0)\big),$$ as $n\to\infty$ for all $\sroot\in\t.$ Again by Lemma \ref{lejosplanos} one has $\g_n\to y_0$ as $n\to\infty$ (in $\G\cup\bord\G$) and thus, by conica\-li\-ty of $\{\g_n\},$ that for all $z\in\bord\G-\{y_0\}$ it holds $\g_n^{-1}z\to x_\infty.$ It follows then that $$U_{d-1}\big(\Fund_\sroot\rho(\g_n)^{-1}\big)\to \Xi_{\Fund_\sroot}^*\big(\xi^{\ii\t}(x_\infty)\big),$$ and, since $\g_n^{-1}y_0\to y_\infty\neq x_\infty,$ that $$\angle\big(\Xi_{\Fund_\sroot}\big(\xi^\t(\g_n^{-1}y_0)\big),U_{d-1}\big(\Fund_\sroot\rho(\g_n)^{-1}\big)\big)>\kappa'.$$ Since the latter lower bound holds for all $\sroot\in\t$ one concludes that $\xi^\t(\g_n^{-1}y_0)$ belongs to the Cartan basin $B_{\t,\kappa''}\big(\rho(\g_n)\big).$ Thus, as in equation (\ref{tutti4}), one has $$\big\|\bb{}(\g_n^{-1},y_0)+\cartan_\t\big(\rho(\g_n)\big)\big\|\leq K,$$ for some $K$ only depending on $\kappa''.$ The latter, together with the second inequality from equation (\ref{coni}) implies that $y_0$ is $\varphi$-conical, as desired.\end{proof}

\begin{proof}[Proof of Theorem \ref{thmB1}]

Consider a positive $\eps.$ Fix $y\in\con\varphi, x\in\bord\G-\{y\}$ and two neighborhoods $A^-$ and $A^+$ of $x$ and $y$ respectively so that for all $(z,w)\in A^-,A^+$ one has $\big|[z,w]_\varphi-[x,y]_\varphi\big|<\eps.$ Pick also an arbitrary $T>0$ so that the quotient projection $\pp$ is injective on $\tilde\BB=A^-\times A^+\times B(0,T).$ We can thus compute the measure of $\BB=\pp(\tilde\BB)$ by the formula (\ref{forBM}). 

If we let $\tilde{\cal K}({\df}^\varphi)=\pp^{-1}\big(\cal K({\df}^\varphi)\big),$ then the Lemma above asserts that $$A^-\times(A^+\cap\con\varphi)\times B(0,T)=\tilde{\cal K}({\df}^\varphi)\cap\tilde\BB.$$

\noindent 
If $|\t|\leq2$ Theorem \ref{dicoAnosov} states that $\BM^\varphi(\tilde\BB)=\BM^\varphi\big(\tilde{\cal K}({\df}^\varphi)\cap\tilde\BB\big),$ which implies, up to $e^{-\delta^\varphi\eps},$ that $$\mu^\varphi(A^+)=\mu^\varphi(A^+\cap\con\varphi).$$ Since $\eps$ is arbitrary one concludes $\mu^\varphi(\con\varphi)=1.$ On the other hand, if $|\t|\geq4$ then we have $\BM^\varphi\big(\tilde{\cal K}({\df}^\varphi)\big)=0$ so $\mu^\varphi(A^+\cap\con\varphi)=0$ and the theorem is proved.
\end{proof}

\appendix

\section{Ergodicity of skew-products with values on $\R$}\label{dim1}

We freely use notation from Proposition \ref{d=1} which we intend to prove. The proof presented here is mainly a collection of results.

We say that $\sf K$ is \emph{recurrent} if for every measurable set $A\subset\EE$ with $\nu(A)>0$ and every neighborhood $N(0)$ of $0$ in $V$ there exists $n\in\Z-\{0\}$ such that one has $$\nu\Big(A\cap\sigma^{-n}A\cap\big\{x:\sum_{k=0}^n\sf K(\sigma^ix)\in N(0)\big\}\Big)>0.$$ 

It is proven in Schmidt \cite[Theorem 5.5]{Schmidt1} that $\sf K$ is recurrent if and only if the skew-product $f^\sf K:\EE\times\R\to\EE\times\R$ is conservative (see Aaronson book \cite[\S\,1.1]{AaronsonLibro} for the definition). It is moreover a general fact that mean-zero cocycles over the reals are conservative, see \cite[Cor. 8.1.5]{AaronsonLibro} from which we state here a particular case. 

\begin{cor}Since by assumption $\int\sf Kd\nu=0,$ the cocycle $f^\sf K$ is conservative and so $\sf K$ is recurrent.\end{cor}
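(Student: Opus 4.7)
The plan is to chain together the two facts from the literature that are displayed just above the corollary, so the argument reduces to verifying that their hypotheses hold in our setting.

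First I would observe that $\nu$, being the equilibrium state of a H\"older potential on the irreducible subshift $\EE$, is $\sigma$-invariant and ergodic by Theorem \ref{equilibrios}, and that $\sf K$ is bounded (H\"older on the compact space $\EE$) hence integrable. Consequently the skew-product $f^{\sf K}:\EE\times\R\to\EE\times\R$ defined in \eqref{trsl} is a cocycle extension of the ergodic probability-preserving system $(\EE,\sigma,\nu)$ by the real-valued function $\sf K$, with $\int \sf K\,d\nu=0$ by hypothesis, and $\Upomega_\nu=\nu\otimes\Leb$ is the natural $f^{\sf K}$-invariant (infinite) measure.

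Next I would apply Aaronson's general principle (\cite[Cor.~8.1.5]{AaronsonLibro}) invoked in the paragraph preceding the corollary: an $\R$-valued cocycle of zero integral over an ergodic probability-preserving transformation produces a conservative skew-product. Applied to $(\sigma,\nu,\sf K)$ this yields conservativity of $f^{\sf K}$ with respect to $\Upomega_\nu$. Finally I would invoke the equivalence from Schmidt \cite[Thm.~5.5]{Schmidt1}, also recalled just above: recurrence of $\sf K$ (in the sense of the definition at the top of the appendix) is equivalent to conservativity of $f^{\sf K}$. Having established the latter, recurrence of $\sf K$ follows immediately.

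Since both cited results are imported as black boxes, there is no genuine obstacle here; the only thing requiring attention is making explicit that the ergodicity/integrability hypotheses of Aaronson's result are met in the subshift setting, which is exactly what the first paragraph accomplishes. The corollary is then a two-line formal consequence.
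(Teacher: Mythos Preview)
Your proposal is correct and follows exactly the same route as the paper: the corollary is stated without further proof in the appendix, as it is the immediate concatenation of Aaronson \cite[Cor.~8.1.5]{AaronsonLibro} (mean-zero implies conservative) and Schmidt \cite[Thm.~5.5]{Schmidt1} (conservative iff recurrent). Your added verification that $\nu$ is ergodic and $\sf K$ integrable is a welcome bit of explicitness, but there is nothing to distinguish the two arguments.
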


The proof of Proposition \ref{d=1} ends with the following theorem of Coelho (obtained by the combination of Example 2.4 and Corollary 3.4 of Coelho \cite{Coelho} ), specific to sub-shifts and equilibrium states.

\begin{thm1}[Coelho \cite{Coelho}]\label{teoyg} Assume $\sf K$ is non-arithmetic and let $\nu$ be an equilibrium state of $\sigma$ for a H\"older potential. Then $f$ is ergodic w.r.t $\Upomega_\nu$ if and only if $\sf K$ is recurrent.\end{thm1}

\section{Mixing}\label{mixingprueba} 

In this appendix we give a quick outline of the proof of Theorem \ref{mixing0}. We use small modifications of classical computations dating back at least to Babillot \cite{babillot} and appearing also in Babillot-Ledrappier \cite{babled}, Ledrappier-Sarig \cite{Ledrappier-Sarig} and more recently in Oh-Pan \cite{Oh-Pan} and Chow-Sarkar \cite{Chow-Sarkar}, where an extra parameter (an holonomy with values on a  compact group) has been added to the Ruelle operator. We thank M. Chow and P. Sarkar for pointing out an issue in the argument presented in S. \cite{orbitalcounting}, F. Ledrappier for suggesting the reference \cite{Ledrappier-Sarig} and H. Oh for pinpointing the reference \cite{Oh-Pan}.

It is first convenient to straighten the flow action by means of twisting the $r$-action. 

\begin{lema}Let $U=\R\times W,$ and let $k:\EE\to U$ be $k(x)=\big(r(x),\int_0^{r(x)}K(x,s)ds\big),$ then: \begin{itemize}\item[-] there exists $\varphi\in U^*$ such that $\varphi(k)=r>0,$\item[-]$\int kd\nu=(\int rd\nu,0)\neq0,$ \item[-]there exists a bi-H\"older homeomorphism $E:\Upsigma_r\times W\to\EE\times U/\hat k,$ where $$\hat k(x,u)=\big(\sigma(x),u-k(x)\big),$$ which is a measurable isomorphism between $\sus$ and $\nu\otimes\Leb_U/\hat k,$ that conjugates $\psi$ with the flow induced on the quotient by $$(x,u)\mapsto(x,u-t\tau),$$ where $\tau\in\int kd\nu$ is such that $\varphi(\tau)=1.$\end{itemize}                                                                                                                                                                                                                                                                                                                                                                                                                                                                             \end{lema}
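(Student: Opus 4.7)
The plan is to verify the three bullet points in order. For the first, take $\varphi:U=\R\times W\to\R$ to be projection onto the $\R$-factor, so that $\varphi\circ k=r>0$ by the definition of $k$. For the second, unpack Abramov's normalization $d\hat\nu=(d\nu\otimes dt)/\int r\,d\nu$ to rewrite the hypothesis $\int K\,d\hat\nu=0$ as the vanishing of the $W$-component of $\int_{\EE}\int_0^{r(x)}K(x,s)\,ds\,d\nu(x)$, which is precisely the $W$-component of $\int k\,d\nu$. The $\R$-component equals $\int r\,d\nu>0$, so $\int k\,d\nu=(\int r\,d\nu,0)\neq 0$, and the positive scalar multiple of $\int k\,d\nu$ with $\varphi$-image $1$ is $\tau=(1,0)\in U$.

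For the third bullet, the key geometric observation is that the set $D:=\{(x,s,v)\in\EE\times\R\times W:0\leq s<r(x)\}$ is simultaneously a fundamental domain for $\hat k$ acting on $\EE\times U$ and for the product action $\hat r\times\mathrm{id}$ on $\EE\times\R\times W$; set-theoretically this identifies $D$ with $\Upsigma_r\times W$. The two quotient actions differ only by the twist $\kappa(x):=\int_0^{r(x)}K(x,u)\,du$ in the $W$-coordinate, which I would absorb by a cocycle adjustment: define
\[
E\big((x,s),v\big):=\Big[\big(x,s,v+{\textstyle\int_0^s K(x,u)\,du}\big)\Big]\in(\EE\times U)/\hat k,
\]
using the representative $0\leq s<r(x)$. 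Well-definedness at the boundary $s=r(x)$ (where $(x,r(x))\sim_{\hat r}(\sigma x,0)$) reduces to $\int_0^{r(x)}K=\kappa(x)$ combined with exactly one application of $\hat k$. Bi-H\"older regularity of $E$ and $E^{-1}$ follows from the H\"older regularity of $r,K$ and the local product structures on both sides; the measure identity (up to the Abramov normalization $\int r\,d\nu$) follows from Fubini together with the translation-invariance of $\Leb_W$, which makes the cocycle correction preserve the Lebesgue factor in $W$. Finally, to verify the flow conjugation, I would work inside the fundamental domain: for small $t$ one has $\psi_t((x,s),v)=((x,s+t),v-\int_s^{s+t}K(x,u)\,du)$, so applying $E$ the telescoping $\int_0^{s+t}K=\int_0^s K+\int_s^{s+t}K$ combines with the $-\int_s^{s+t}K$ loss in $W$ to produce $[(x,s+t,v+\int_0^s K(x,u)\,du)]$, which is pure translation by $t\tau$ in the $\R$-coordinate. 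Boundary crossings $s\nearrow r(x)$ are handled automatically by one application of $\hat k$, which is precisely how $E$ was designed at the boundary.

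The argument is essentially bookkeeping; the only non-obvious step is choosing the correct cocycle --- the partial integral $\int_0^s K$ along the flow direction --- so as to both fix well-definedness at the boundary and straighten the flow. The main care needed is matching sign conventions between $\upsigma^r$, $\hat r$, and the translation direction of $\tau$ on the quotient; once these are fixed, the $\hat k$-equivariance of the covering map forces the entire construction.
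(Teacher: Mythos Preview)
Your proof is correct and follows essentially the same approach as the paper: the paper defines the same map $E\big((x,t),w\big)=\big(x,(t,w+\int_0^tK(x,s)ds)\big)$ and verifies the $\hat r$/$\hat k$-equivariance via the $\hat r$-invariance of $K$, then declares that ``the remaining assertions follow similarly.'' Your write-up actually supplies more detail than the paper on the flow conjugation and the measure identity, and your flagging of the sign conventions is apt.
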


\begin{proof} Define $E:\Upsigma_r\times W\to \EE\times U/\hat k$ by $E\big((x,t),w\big)=\Big(x,\big(t,w+\int_0^tK(x,s)ds\big)\Big).$ It is well defined since the above formula is equivariant, indeed one has 

\begin{alignat*}{3}\int_0^{t-r(x)}K(\sigma(x),s)ds & = \int_{r(x)}^{t}K(\sigma(x),s-r(x))ds & \\ & = \int_{r(x)}^{t}K(x,s)ds &\qquad(\textrm{$K$ is $\hat r$-invariant})\\ & = \int_{0}^{t}K(x,s)ds-\int_0^{r(x)}K(x,s)ds &
\end{alignat*} which implies

\begin{alignat*}{2} E\big(\hat r(x,t),w\big) & =\Big(\sigma(x),\big(t-r(x),w+\int_0^{t-r(x)}K(\sigma(x),s)ds\big)\Big)\\
 & = \Big(\sigma(x),\big(t-r(x),w+\int_{0}^{t}K(x,s)ds-\int_0^{r(x)}K(x,s)ds\big)\Big)\\
 & =\hat k\big(E((x,t),w)\big).
\end{alignat*} as desired. The remaining assertions follow similarly. \end{proof}

We will work from now on with this latter flow, still denoted by $\psi$ in order not to overcharge with notation. Up to Liv\v sic-cohomology we may assume that $k$ is defined on $\EE^+.$

By measure-theoretic arguments we consider $F,G:\EE^+\times U\to\R$ that we can assume have separated variables, i.e. can be written as $F(x,u)=p_F(x)v_F(u),$ with $p_F$ and $p_G$ H\"older-continuous and $v_F$ and $v_G$ smooth with compact support. We have to show that, as $t\to\infty,$ $$t^{\dim W/2}\sus\big(F\cdot G\circ\psi_t)\to\sus(F)\sus(G).$$ Tracing back the definitions one is brought up to understanding the limit as $t\to\infty$ of  \begin{equation}\label{teinque}t^{\dim W/2}\big(\int_{\EE^+\times U}\sum_{n\in\N}F(x,u)G\big(\sigma^nx,u-S_nk(x)-t\tau\big)d\nu d\Leb_U\big),\end{equation} where $S_nk(x)=\sum_{i=0}^nk(\sigma^i(x))$ is the Birkhoff sum. We focuss on the integral between brackets, only to multiply at the very end of our computation by $\sqrt{t}^{d-1},$ where $d=\dim U.$ The above integral becomes $$\int_{\EE^+\times U}\sum_{n\in\N}p_F(x)p_G\big(\sigma^nx\big)v_F(u)v_G\big(u-S_nk(x)-t\tau\big)d\nu(x) d\Leb_U(u).$$

Recall that by assumption there is $\varphi\in U^*$ so that $\varphi(k)=hr>0$ and that  $P(-hr)=0,$ so up to Liv\v sic-cohomology we can assume that $-\varphi(k)=-hr$ is \emph{normalized}, i.e. so that the Ruelle operator, defined by $$\cal L_{\varphi}\Phi(x)=\sum_{y:\sigma(y)=x}e^{-\varphi\big(k(y)\big)}\Phi(y),$$ verifies $\cal L_{\varphi}^*\nu=\nu,$ in particular, for every pair of H\"older-continuous functions $j,l$ on $\EE^+$ one has $\int_{\EE^+}j(\sigma x)l(x)d\nu=\int_{\EE^+}j(x)\big(\cal L_{\varphi}l\big)(x)d\nu.$

Denote by $\Leb_{U^*}$ the Lebesgue measure on $U^*$ defined by the Fourier inversion formula $$v_G(w)=\int_{U^*}e^{i\psi(w)}\cal Fv_G(\psi)d\Leb_{U^*}(\psi)$$ for the Fourier transform $\cal Fv_G$ if $v_G.$ As in Babillot-Ledrappier \cite[\S\,2.3]{babled} we can, and will, assume that $\cal F v_G$ is of class $\clase^N$ for some $N>(d-1)/2$ and has compact support.

We will suppress the notation $\nu,$ $\Leb_U$ and $\Leb_{U^*}$ from the integrals from now on. The desired integral, equation (\ref{teinque}), then becomes \begin{alignat}{2}\label{22} &  \int_{\EE^+\times U}\sum_{n\in\N}\big(\cal L_{\varphi}^np_F\big)(x)p_G\big(x\big)v_F(u)v_G\big(u-S_nk(x)-t\tau\big)dx du\nonumber\\= &  \int_{\EE^+\times U}\sum_{n\in\N}\big(\cal L_{\varphi}^np_F\big)(x)p_G\big(x\big)v_F(u)\int_{U^*}e^{i\psi\big(u-S_nk(x)-t\tau\big)}\cal Fv_G(\psi)d\psi dx du,\nonumber\\= & \int_{\EE^+\times U}\int_{U^*}\sum_{n\in\N}\big(\cal L_{\varphi+i\psi}^np_F\big)(x)p_G(x)v_F(u)e^{i\psi\big(u-t\tau\big)}\cal Fv_G(\psi)d\psi dx du \nonumber\\ = & \int_{\EE^+\times U}p_G(x)v_F(u)\int_{U^*}\sum_{n\in\N}\big(\cal L_{\varphi+i\psi}^np_F\big)(x)e^{i\psi\big(u-t\tau\big)}\cal Fv_G(\psi)d\psi dx du. \end{alignat} 

We seek thus to understand the nature of $\psi\mapsto\sum_{n\in\N}\cal L_{\varphi+i\psi}^n,$ for which one is brought to understand the spectral radius $\rr_\psi$ of $\cal L_{\varphi+i\psi}.$  Applying Parry-Pollicott \cite[Chapter 4]{parrypollicott} we obtain that for every $\psi\in U^*$ one has $\rr_\psi\in(0,1].$ We then distinguish two situations.

\textbf{The spectral radius $\rr_\psi$ is smaller than $1;$} in which case $$\eta\mapsto\sum_{n\in\N}\cal L_{\varphi+i\eta}^n=(1-\cal L_{\varphi+i\eta})^{-1},$$ is analytic on a neighborhood of $\psi.$

\textbf{The spectral radius $\rr_\psi$ equals $1.$} One has then the following:

\begin{thm}[{Parry-Pollicott \cite[Chapter 4]{parrypollicott}}]\label{w} One has $\rr_\psi=1$ if and only if there exists a H\"older continuous $w_\psi:\EE^+\to\SS^1$ such that for all $x\in\EE^+$ one has $$e^{i\psi(k(x))}=\lambda_\psi\frac{w_\psi(\sigma(x))}{w_\psi(x)}.$$ In this situation the function $w_\psi$ is unique up to scalars.
\end{thm}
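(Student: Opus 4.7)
The plan is to dualize the statement via the Ruelle transfer operator $\cal L_{\varphi+i\psi}$ acting on the Banach space of $\alpha$-H\"older functions on $\EE^+.$ Since $-\varphi\circ k$ is normalized, $\cal L_\varphi$ has spectral radius $1,$ leading eigenvalue $1$ with eigenfunction $\mathbf 1$ and spectral gap (quasi-compactness via Ionescu-Tulcea-Marinescu / Hennion). The pointwise bound $|\cal L_{\varphi+i\psi}^n \Phi|\leq \cal L_\varphi^n|\Phi|$ shows $\rr_\psi\leq 1,$ and the same Lasota-Yorke argument that yields quasi-compactness for $\cal L_\varphi$ applies verbatim to $\cal L_{\varphi+i\psi},$ so its essential spectral radius is strictly less than $1.$

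The easy direction ($\Leftarrow$) is a direct conjugation: assume $e^{i\psi(k(x))}=\lambda_\psi w_\psi(\sigma x)/w_\psi(x)$ with $|w_\psi|=1.$ Then a direct manipulation inside the definition of $\cal L_{\varphi+i\psi}$ shows
\[
w_\psi(x)\cdot\cal L_{\varphi+i\psi}(\Phi)(x)=\lambda_\psi^{-1}\,\cal L_\varphi\bigl(w_\psi\Phi\bigr)(x),
\]
so the multiplication operator $M_{w_\psi}$ (which is invertible on H\"older functions since $|w_\psi|=1$) conjugates $\cal L_{\varphi+i\psi}$ to $\lambda_\psi^{-1}\cal L_\varphi.$ The equation forces $|\lambda_\psi|=1,$ hence $\rr_\psi=\rr(\cal L_\varphi)=1.$

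For the converse ($\Rightarrow$), assume $\rr_\psi=1.$ Quasi-compactness implies that the peripheral spectrum of $\cal L_{\varphi+i\psi}$ consists of finitely many eigenvalues of finite multiplicity. Pick $\lambda_\psi$ with $|\lambda_\psi|=1$ and a (H\"older) eigenfunction $h_\psi\neq 0$: $\cal L_{\varphi+i\psi}h_\psi=\lambda_\psi h_\psi.$ The pointwise bound gives $|h_\psi|\leq \cal L_\varphi |h_\psi|;$ integrating against $\nu$ (which is $\cal L_\varphi^*$-invariant) turns this into an equality, so $|h_\psi|$ is a non-negative continuous fixed point of $\cal L_\varphi.$ By uniqueness of the leading eigenfunction we conclude $|h_\psi|$ is a non-zero constant, so we may rescale $h_\psi$ to take values in $\SS^1.$ Now equality $|h_\psi(x)|=1=\bigl|\sum_{\sigma y=x}e^{-\varphi(k(y))}e^{-i\psi(k(y))}h_\psi(y)\bigr|$ together with $\sum_{\sigma y=x}e^{-\varphi(k(y))}=1$ is equality in the triangle inequality for a convex combination of unit complex numbers; hence all these numbers coincide, and for \emph{every} $y$ with $\sigma y=x$
\[
e^{-i\psi(k(y))}h_\psi(y)=\lambda_\psi h_\psi(x)=\lambda_\psi h_\psi(\sigma y).
\]
Setting $w_\psi:=\bar h_\psi$ (still H\"older with values in $\SS^1$) and renaming $\lambda_\psi\mapsto\lambda_\psi^{-1}$ yields the desired cohomological relation.

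Uniqueness is the standard Liv\v sic argument. If $(w,\lambda)$ and $(w',\lambda')$ are two solutions, then $\Phi:=w/w'$ satisfies $\Phi(\sigma x)=(\lambda'/\lambda)\Phi(x);$ so $|\Phi|$ is $\sigma$-invariant and continuous, hence constant by topological transitivity of $\sigma,$ and evaluating $\Phi(\sigma^n x)=(\lambda'/\lambda)^n\Phi(x)$ on two periodic points whose periods are coprime (which exist by mixing of $\EE$) forces $\lambda'=\lambda;$ then $\Phi$ is $\sigma$-invariant and continuous, hence constant, i.e.\ $w=\alpha w'$ for some $\alpha\in\SS^1.$ The main obstacle in the argument is precisely establishing quasi-compactness of $\cal L_{\varphi+i\psi}$ so that $\rr_\psi=1$ actually implies the existence of a peripheral eigenfunction; once this is granted (and it is the content of Chapter 4 of Parry-Pollicott \cite{parrypollicott}), the rest is the Perron-Frobenius-with-equality argument above.
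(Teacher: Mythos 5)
The paper offers no proof of this statement: it is quoted verbatim from Parry--Pollicott, so there is nothing internal to compare against. Your reconstruction is the standard argument from that source and is correct in all essentials: the bound $|\cal L_{\varphi+i\psi}^n\Phi|\leq\cal L_\varphi^n|\Phi|$ gives $\rr_\psi\leq1$; the Lasota--Yorke inequality pushes the essential spectral radius below $1$ so that $\rr_\psi=1$ forces a peripheral eigenfunction $h_\psi$; the integration against the $\cal L_\varphi^*$-invariant measure $\nu$ (of full support) upgrades $|h_\psi|\leq\cal L_\varphi|h_\psi|$ to equality, whence $|h_\psi|$ is constant; and the equality case in the convex combination of unimodular numbers yields the cohomological equation. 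The converse via conjugation by $M_{w_\psi}$ is also right (and your renaming $\lambda_\psi\mapsto\lambda_\psi^{-1}$ correctly absorbs the inversion of the eigenvalue that the conjugation produces).

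One caveat worth recording: the paper's standing hypothesis in \S\,\ref{erg1} is that $\EE$ is an \emph{irreducible} subshift, whereas your uniqueness argument (coprime periods, constancy of $\sigma$-invariant continuous functions) and the simplicity of the peripheral eigenvalue both use topological mixing (aperiodicity). For a transitive but periodic subshift the pair $(\lambda_\psi,w_\psi)$ is only determined modulo the cyclic decomposition (e.g.\ with $k\equiv0$ one can take $\lambda_\psi$ any $p$-th root of unity and $w_\psi$ locally constant on the cyclic components), so the clause ``unique up to scalars'' genuinely requires aperiodicity. This is a mismatch between the paper's stated hypotheses and the cited theorem rather than an error in your argument, but you should state the aperiodicity assumption explicitly where you invoke the existence of periodic orbits with coprime periods.
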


Applying moreover Parry-Pollicott \cite[Theorem 4.5]{parrypollicott} and the perturbation theorem \cite[Prop. 4.6]{parrypollicott}, there exists a neighborhood $\cal O_\psi$ of $\psi$ such that for all $\eta\in\cal O_\psi$ one has \begin{equation}\label{desc}\cal L_{\varphi+i\eta}=\lambda_\eta Q_\eta+N_\eta \end{equation} where $Q_\eta$ is a rank-one projector, $N_\eta$ is an operator with spectral radius strictly smaller than $\rr_\eta=|\lambda_\eta|$ and such that $Q_\eta N_\eta=N_\eta Q_\eta=0.$ The above objects are analytic on $\cal O_\psi.$ The operator $M_\psi=\sum_{n\in\N}N_\psi^n$ is hence well defined and analytic on $\cal O_\psi.$ Observe that, as we have assumed $-\varphi(k)$ to be normalized, one has \begin{equation}\label{Q0}Q_0(f)(x)=\big(\int_{\EE^+}fd\nu\big)\cdot 1.\end{equation}

\begin{obs} On the other hand, since $k$ has dense group of periods on $U,$ $\psi(k):\EE^+\to\R$ has dense group of periods as soon as $\psi\neq0,$ and Parry-Pollicott \cite[Theorem 4.5]{parrypollicott} implies that  $\{\lambda_\psi^n:n\in\Z\}$ is dense in $\SS^1=\bord\DD.$ In particular $\lambda_\psi\neq1,$ unless $\psi=0.$ Consequently, if $\psi\neq0$ then for all $\eta\in \cal O_\psi$ the operator $$\frac{Q_\eta}{1-\lambda_\eta}-M_\eta$$ is well defined and analytic on $\cal O_\psi.$\end{obs}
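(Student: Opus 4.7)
The plan is to verify both parts of this observation — that $\lambda_\psi \neq 1$ for $\psi \neq 0$, and that $Q_\eta/(1-\lambda_\eta) - M_\eta$ is well-defined and analytic on $\cal O_\psi$ — by combining the non-arithmeticity hypothesis with Theorem \ref{w} for the first, and the perturbation theorem with continuity for the second. The first step is the substantive one, so I would start there.

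Suppose $\lambda_\psi^m = 1$ for some integer $m \geq 1$. By the uniqueness assertion in Theorem \ref{w}, raising the cohomological equation for $\psi$ to the $m$-th power yields that $\lambda_{m\psi} = \lambda_\psi^m = 1$, and applying Theorem \ref{w} directly to $m\psi$ furnishes a H\"older continuous $w : \EE^+ \to \SS^1$ with $e^{im\psi(k(x))} = w(\sigma x)/w(x)$. Evaluating at a periodic point $x$ of period $p$ gives $m\psi(S_p k(x)) \in 2\pi\Z$ for every periodic orbit. Since by hypothesis the periods of $k$ span a dense subgroup $D$ of $U$, the continuous linear form $m\psi: U \to \R$ takes values in $2\pi\Z$ on $D$; by continuity and closedness of $2\pi\Z$ in $\R$, the same holds on $\overline{D} = U$. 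A nonzero real-linear form on a real vector space is surjective onto $\R$, so $m\psi = 0$, forcing $\psi = 0$. Contrapositively, for $\psi \neq 0$ no power $\lambda_\psi^m$ equals $1$; thus $\{\lambda_\psi^n\}$ is an infinite subgroup of $\SS^1$, hence dense.

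For the analyticity, the perturbation theorem already supplies analyticity on $\cal O_\psi$ of the maps $\eta \mapsto \lambda_\eta, Q_\eta, N_\eta$, with $N_\eta$ satisfying a uniform spectral gap $\|N_\eta^n\| \leq C \rho^n$ for some $\rho < 1$, after possibly shrinking $\cal O_\psi$. Continuity of $\lambda_\eta$ together with $\lambda_\psi \neq 1$ then lets us further shrink $\cal O_\psi$ so that $|1 - \lambda_\eta| \geq \delta > 0$ uniformly, making $(1-\lambda_\eta)^{-1} Q_\eta$ an analytic operator-valued function; the Neumann series $M_\eta = \sum_{n \geq 0} N_\eta^n$ converges uniformly and is analytic as well, and the difference is thus analytic as claimed. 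The main obstacle is conceptual rather than computational — the key is translating the non-arithmeticity of the vector-valued $k$, through the coboundary characterization of Theorem \ref{w}, into the non-vanishing of $1 - \lambda_\psi$. Downstream, this observation is exactly what will enable the asymptotic analysis of (\ref{22}): after splitting the $\psi$-integral into a small ball around $0$ and its complement, the operator $Q_\eta/(1-\lambda_\eta) - M_\eta$ serves as the analytic closed form for $\sum_n \cal L_{\varphi+i\eta}^n$ on the complement, so that only an arbitrarily small neighborhood of $\eta = 0$ contributes non-negligibly, where stationary phase applied to $1 - \lambda_\eta$ (using $\lambda_0 = 1$ and the Taylor expansion of $\log \lambda_\eta$) produces the $t^{-\dim W/2}$ decay asserted in Theorem \ref{mixing0}.
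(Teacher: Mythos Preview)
Your argument is correct and follows essentially the same path as the paper. The only difference is one of detail: the paper black-boxes the implication ``$\psi(k)$ has dense periods $\Rightarrow$ $\{\lambda_\psi^n\}$ dense in $\SS^1$'' by citing Parry--Pollicott Theorem~4.5, whereas you unpack this implication directly from the cohomological characterization (Theorem~\ref{w}) by evaluating at periodic points and using that a nonzero linear form cannot take values in a discrete subgroup on a dense set. Your version is more self-contained; the paper's is shorter. The analyticity part (shrinking $\cal O_\psi$ via continuity of $\lambda_\eta$, Neumann series for $M_\eta$) is standard and matches what the paper leaves implicit --- note that the paper itself tacitly allows shrinking $\cal O_\psi$ in the proof of the subsequent Lemma~\ref{analitico}.
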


\begin{lema}\label{analitico}If $\rr_\psi=1$ then for all $\eta\in\cal O_\psi-\{\psi\}$ it holds $\rr_\eta<1.$ Consequently $$\eta\mapsto\sum_{n\in\N}\cal L_{\varphi+i\eta}^n=(1-\cal L_{\varphi+i\eta})^{-1}=\frac{Q_\eta}{1-\lambda_\eta}+M_\eta$$ is analytic on $\cal O_\psi-\{\psi\}$ and, if $\psi\neq0,$ it extends analytically to $\cal O_\psi,$ as the right-hand-side of the equation is well defined on $\psi.$
\end{lema}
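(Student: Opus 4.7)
The plan is two-fold. First, a dynamical step using the non-arithmeticity hypothesis of Theorem \ref{mixing0} to isolate $\psi$ inside the resonance set $\{\eta \in \cal O_\psi : \rr_\eta = 1\}$; second, a formal assembly of the resolvent identity from the perturbation decomposition \eqref{desc}.

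For the dynamical step I will argue that, after possibly shrinking $\cal O_\psi,$ the real-analytic function $\eta \mapsto |\lambda_\eta|^2 - 1$ is strictly negative except at $\psi.$ Since $|\lambda_\eta| = \rr_\eta \leq 1$ throughout $\cal O_\psi$ with equality at $\psi,$ its gradient at $\psi$ vanishes, and it suffices to show that its Hessian is strictly negative definite. The Hessian in a real direction $\sroot \in U^*$ is, by the complex-perturbative expansion of Ruelle's eigenvalue formula, minus a positive multiple of the asymptotic variance of $\sroot \circ k$ paired against the eigenform of $\cal L_{\varphi+i\psi}^*,$ and by the standard Sinai dichotomy (adapted to the complex potential) this variance vanishes only when $\sroot \circ k$ is Liv\v sic-cohomologous to a constant over the shift. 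But a cohomology $\sroot \circ k = c + (g\circ\sigma - g)$ would force all periods of $\sroot \circ k$ to lie in the discrete subgroup $c\Z \subset \R,$ whereas these periods are $\{\sroot(p(\tau), \int_\tau K)\},$ the image under $\sroot$ of a set that generates a dense subgroup of $U$ by the non-arithmeticity hypothesis; hence they generate a dense subgroup of $\R$ whenever $\sroot \neq 0,$ a contradiction. The Hessian is therefore strictly negative definite, $\psi$ is an isolated zero of $|\lambda_\eta|^2 - 1,$ and shrinking $\cal O_\psi$ accordingly yields $\{\rr_\eta = 1\} \cap \cal O_\psi = \{\psi\}.$

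For the formal step: from $Q_\eta^2 = Q_\eta$ and $Q_\eta N_\eta = N_\eta Q_\eta = 0$ one deduces $\cal L_{\varphi+i\eta}^n = \lambda_\eta^n Q_\eta + N_\eta^n.$ After possibly shrinking $\cal O_\psi$ further, the perturbation theorem furnishes a uniform upper bound $c < 1$ on the spectral radius of $N_\eta,$ so $M_\eta := \sum_{n \geq 0} N_\eta^n$ is analytic throughout $\cal O_\psi.$ The dynamical step gives $|\lambda_\eta| < 1$ on $\cal O_\psi \setminus \{\psi\},$ hence $\sum_n \lambda_\eta^n = (1-\lambda_\eta)^{-1}$ converges there, and combining with $M_\eta$ yields the claimed resolvent identity. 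When $\psi \neq 0,$ the Remark preceding the lemma gives $\lambda_\psi \neq 1;$ one further shrinking of $\cal O_\psi$ ensures $1 - \lambda_\eta$ is bounded away from zero on all of $\cal O_\psi,$ so the right-hand side $Q_\eta/(1-\lambda_\eta) + M_\eta$ is analytic across $\psi$ and furnishes the advertised analytic extension.

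The technically delicate point will be the complex-perturbative variance identification at the non-real point $\eta = \psi,$ where the ``equilibrium state'' is a complex linear functional on H\"older functions rather than a genuine probability measure. This requires the twisted analogue of Ruelle's second-variation formula, as developed in Parry-Pollicott \cite{parrypollicott} and used systematically in Babillot-Ledrappier \cite{babled}. Once this identification is in hand, the chain ``variance vanishes $\Leftrightarrow$ Liv\v sic-cohomology to a constant $\Leftrightarrow$ periods in a discrete subgroup'' combines directly with the density of flow-periods to yield the contradiction, as outlined above; the remaining resolvent bookkeeping is automatic from the perturbation decomposition.
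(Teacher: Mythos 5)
Your overall architecture (isolate the resonance at $\psi$ inside $\{\eta:\rr_\eta=1\},$ then assemble the resolvent from the decomposition \eqref{desc}) is sound, and the resolvent bookkeeping in your last two paragraphs is essentially what the paper does. The dynamical step, however, takes a genuinely different, second-order route, and as written it has a gap exactly at the point you flag. The assertion that the Hessian of $|\lambda_\eta|^2$ at $\psi$ is ``minus a positive multiple of the asymptotic variance of $\sroot\circ k$ paired against the eigenform of $\cal L_{\varphi+i\psi}^*$'' is not meaningful as stated: at a non-real resonance point the eigenform is a complex functional, the resulting ``variance'' is a complex number with no sign, and neither Parry--Pollicott nor Babillot--Ledrappier provides a Sinai-type dichotomy for complex potentials in this form (their second-order expansion is only performed at $\psi=0$). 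What rescues your computation is precisely the content of Theorem \ref{w}: since $\rr_\psi=1,$ the identity $e^{i\psi(k)}=\lambda_\psi\,(w_\psi\circ\sigma)/w_\psi$ shows that multiplication by $w_\psi$ conjugates $\cal L_{\varphi+i(\psi+\zeta)}$ to $\lambda_\psi\cal L_{\varphi+i\zeta},$ whence $\lambda_{\psi+\zeta}=\lambda_\psi\lambda_\zeta$ and $|\lambda_{\psi+\zeta}|=|\lambda_\zeta|.$ This reduces your Hessian computation to the expansion at $0,$ where the eigenform is the genuine equilibrium state $\nu$ and the classical variance/coboundary dichotomy applies; from there your chain ``variance $=0$ $\Rightarrow$ $\zeta(k)$ cohomologous to a constant $c$ $\Rightarrow$ periods of $\zeta(k)$ lie in $c\Z$'' does contradict non-arithmeticity, as you say.

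Once Theorem \ref{w} is invoked, though, the paper's argument is shorter and purely first-order: if $\eta\in\cal O_\psi$ also has $\rr_\eta=1,$ dividing the two eigenfunction equations gives $$e^{i(\psi-\eta)(k)}=\frac{\lambda_\psi}{\lambda_\eta}\cdot\frac{(w_\psi/w_\eta)\circ\sigma}{w_\psi/w_\eta};$$ after shrinking $\cal O_\psi$ (using analyticity of $\eta\mapsto\lambda_\eta$ and the density Remark) so that $|\lambda_\eta|=1$ forces $\lambda_\eta=\lambda_\psi,$ the function $(\psi-\eta)(k)$ is a multiplicative coboundary with eigenvalue $1,$ and Theorem \ref{w} together with the Remark (non-arithmeticity) forces $\psi-\eta=0.$ This bypasses any second derivative of $\lambda_\eta$ away from the origin. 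Your route, once repaired via the conjugation above, buys a quantitative bound $|\lambda_\eta|^2\leq1-c\|\eta-\psi\|^2$ near $\psi,$ but the paper only needs such an estimate at $\psi=0,$ where it enters through the Weierstrass preparation of $1-\lambda_\psi.$
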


\begin{proof}As $\eta\mapsto\lambda_\eta$ is analytic on $\cal O_\psi,$ together with the above density result, it follows that there isa neighborhood (possibly smaller but) still denoted by $\cal O_\psi$ such that if $|\lambda_\eta|=1$ then $\lambda_\eta=\lambda_\psi.$ One has then applying Theorem \ref{w} that $$e^{i(\psi-\eta)(k(x))}=\lambda_\psi \frac{w_\psi(\sigma(x))}{w_\psi(x)}\lambda_\eta^{-1}\frac{w_\eta(x)}{w_\eta(\sigma(x))}=\frac{(w_\psi/w_\eta )(\sigma(x))}{(w_\psi/w_\eta )(x)}.$$ The remark and Theorem \ref{w} give $\psi-\eta=0$ and so the lemma is stablished.\end{proof}

One obtains then that the operator $\sum_{n\in\N}\cal L_{\varphi+i\psi}^n$ is well defined and varies analytically on $\psi$ except at $\psi=0.$ One is thus taken to localize the integral (\ref{22}) about $0.$ To that end one considers an auxiliary $\clase^\infty$ function $\kappa:U^*\to\R,$ supported on the neighborhood $\cal O_0$ where equation (\ref{desc}) holds, with $\kappa(0)=1,$ and we seek to understand the modified integral over $U^*$ on equation (\ref{22}) given, for $(x,u)\in\EE^+\times U,$ by  \begin{equation}\label{modif}\int_{U^*}\big(1-\kappa(\psi)\big)\sum_{n\in\N}\big(\cal L_{\varphi+i\psi}^np_F\big)(x)e^{i\psi\big(u-t\tau\big)}\cal Fv_G(\psi)d\psi.\end{equation}

Consider from \S\,\ref{E=1} the critical hypersurface $\cal Q_{k}\subset U^*.$ It follows from Babillot-Ledrappier \cite{babled} that one has $\cal Q_{k}=\PP^{-1}(0)$ and the tangent space $\sf T_\varphi\cal Q_{k}=\{\psi\in U^*:\psi(\tau)=0\}.$ For $\psi\in U^*$ let \begin{equation}\label{descpsi}\psi=s_\psi\varphi+\psi_0\end{equation} be its decomposition along $U^*=\R\varphi\oplus\sf T_\varphi\cal Q_{k}.$ Decomposing $d\psi$ as $dsd\psi_0,$ the integral (\ref{modif}) becomes

\begin{equation}\label{modif2}\int_{\R}e^{-its}\int_{\sf T_\varphi\cal Q_{k}}e^{i\psi(u)}\big(1-\kappa(\psi)\big)\sum_{n\in\N}\big(\cal L_{\varphi+i\psi}^np_F\big)(x)\cal Fv_G(\psi)d\psi_0ds=O(t^{-N})\end{equation} 

\noindent
as it is the Fourier transform of the integral over $\sf T_\varphi\cal Q_{k},$ which is, by Lemma \ref{analitico} as regular as $\cal Fv_G(\psi),$ and this function was chosen to be of class $\clase^N$ for some $N>(d-1)/2.$

We can thus focus on the integral from equation (\ref{22}) localized about 0, so it becomes, using again Lemma \ref{analitico}, \begin{alignat}{2}\label{teinque3}\int_{\EE^+\times U}\int_{U^*}\kappa(\psi)\Big(\frac{(Q_\psi p_F)x}{1-\lambda_\psi}+(M_\psi p_F)(x)\Big)p_G(x)v_F(u)e^{i\psi\big(u-t\tau\big)} & \cal Fv_G(\psi)d\psi dx du\nonumber\\ & +O(t^{-N}).\end{alignat}

We first treat the term containing $M_\psi,$ which is dealt with as we did with equation (\ref{modif}). Indeed, for the same reasons one has for all $(x,u)\in\EE^+\times U,$ the integral 

\begin{alignat}{2}\label{Marafue} & \int_{U^*}\kappa(\psi)(M_\psi p_F)(x)e^{i\psi\big(u-t\tau\big)}  \cal Fv_G(\psi)d\psi \nonumber \\ &=\int_{\R}e^{-its}\int_{\sf T_\varphi\cal Q_{k}}e^{i\psi(u)}\kappa(\psi)(M_\psi p_F)(x)  \cal Fv_G(\psi)d\psi_0ds \nonumber\\ &= O(t^{-N}).\end{alignat}

We effort then on understanding the integral \begin{equation}\label{dondetamo}\int_{U^*}\kappa(\psi)\frac{(Q_\psi p_F)x}{1-\lambda_\psi}e^{i\psi\big(u-t\tau\big)}\cal Fv_G(\psi)d\psi,\end{equation} the issue being the singularity at $\psi=0$ of $1/(1-\lambda_\psi).$ To that end, consider the function $\QQ: \sf T_\varphi\cal Q_{k}\to\R$ defined implicitly by the equation $$\QQ(\psi_0)\varphi+\psi_0\in\cal Q_{k}.$$ It is analytic, critical at 0 with $\QQ(0)=1,$ and has positive-definite Hessian at 0, $\Hess_0\QQ,$ using Taylor expansion one writes $$\QQ(\psi_0)=1+(1/2)\Hess_0\QQ(\psi_0)+O\big(\|\psi_0\|^2\big).$$

One applies the Weierstrass preparation Theorem \cite[Theorem 7.5.1]{Hormander} to express $1-\lambda_\psi$  about $0$ as $$1-\lambda_\psi=a(\psi)\Big(is_\psi-(1/2)\Hess_0\QQ(\psi_0)-O\big(\|\psi_0\|^2\big)\Big),$$ (recall the decomposition of $\psi$ from equation (\ref{descpsi})) where $a$ is real-analytic and $a(0)=h\int rd\nu$ (see Babillot-Ledrappier \cite[page 37]{babled} or Ledrappier-Sarig \cite[page 17]{Ledrappier-Sarig} for details). Whence, as in \cite[Lemma 2.3]{babled}, using the formula $1/z=-\int_0^\infty e^{Tz}dT$ one has 

$$\frac1{1-\lambda_\psi}=-\frac1{a(\psi)}\int_0^\infty e^{T\Big(is_\psi-(1/2)\Hess_0\QQ(\psi_0)-O\big(\|\psi_0\|^2\big)\Big)}dT,$$

\noindent and equation (\ref{dondetamo}) becomes, denoting by $C(\psi,x)=\kappa(\psi)\frac{(Q_\psi p_F)(x)}{a(\psi)}\cal Fv_G(\psi)$ to lighten the notation,

\begin{alignat}{2}\label{separar} & \int_{U^*}e^{i\psi(u-t\tau)}\kappa(\psi)\frac{(Q_\psi p_F)(x)}{a(\psi)}\cal Fv_G(\psi)\int_0^\infty e^{T\Big(is_\psi-(1/2)\Hess_0\QQ(\psi_0)-O\big(\|\psi_0\|^2\big)\Big)}dT d\psi_0ds \nonumber\\ &=\int_0^\infty\int_{\R}e^{-i(t-T)s}\int_{\sf T_\varphi\cal Q_{k}}C(\psi,x)e^{i\psi(u)-T\Big(\Hess_0\QQ(\psi_0)/2+O\big(\|\psi_0\|^2\big)\Big)} d\psi_0dsdT.
\end{alignat}

Using Taylor series on the $\psi$-variable we may, and will, assume that $C(\psi,x)$ is of the form $c_x(s_\psi)b(\psi_0).$ Equation (\ref{separar}) now becomes

\begin{alignat}{2}\label{separar2} &\int_0^\infty\int_{\R}e^{-i\big(t-T-\varphi(u)\big)s}c_{x}(s)ds\int_{\sf T_\varphi\cal Q_{k}}b(\psi_0)e^{i\psi_0(u)-T\Big(\Hess_0\QQ(\psi_0)/2+O\big(\|\psi_0\|^2\big)\Big)} d\psi_0dT \nonumber \\&= \int_0^\infty \cal F c_{x}\big(t-T-\varphi(u)\big)\int_{\sf T_\varphi\cal Q_{k}}b(\psi_0)e^{i\psi_0(u)-T\Big(\Hess_0\QQ(\psi_0)/2+O\big(\|\psi_0\|^2\big)\Big)} d\psi_0dT \nonumber\\ &= \int_{\varphi(u)}^\infty\cal F c_{x}\big(t-T\big)\int_{\sf T_\varphi\cal Q_{k}}b(\psi_0)e^{i\psi_0(u)-\big(T-\varphi(u)\big)\Big(\Hess_0\QQ(\psi_0)/2+O\big(\|\psi_0\|^2\big)\Big)} d\psi_0dT,\nonumber\\ &=\int_{\max(t/2,\varphi(u))}^\infty\cal F c_{x}(t-T)\int_{\sf T_\varphi\cal Q_{k}}b(\psi_0)e^{i\psi_0(u)-\big(T-\varphi(u)\big)\Big(\Hess_0\QQ(\psi_0)/2+O\big(\|\psi_0\|^2\big)\Big)} d\psi_0dT+O(t^{-N}),
\end{alignat}

\noindent
where the last equality is, as in \cite[page 27]{babled}, essentially due to the fact that $c_x$ has compact support and is of class $\clase^N.$ Applying the change of variables $\psi_0\mapsto\psi_0/\sqrt T$ the last integral becomes

\begin{alignat}{2}\label{tamoahi}&\int_{\max(t/2,\varphi(u))}^\infty\frac{\cal F c_{x}(t-T)}{{\sqrt T}^{d-1}}\int_{\sf T_\varphi\cal Q_{k}}b(\frac{\psi_0}{\sqrt T})e^{i\frac{\psi_0}{\sqrt T}(u)-\big(1-\frac{\varphi(u)}T\big)\Big(\Hess_0\QQ(\psi_0)/2+O\big(\frac{\|\psi_0\|^2}T\big)\Big)} d\psi_0dT+O(t^{-N}) \nonumber\\ &=-\int_{-\infty}^{\min(t/2,t-\varphi(u))}\frac{\cal F c_{x}(S)}{{\sqrt{t-S}}^{d-1}}\int_{\sf T_\varphi\cal Q_{k}}b(\frac{\psi_0}{\sqrt{t-S}})e^{i\frac{\psi_0}{\sqrt{t-S}}(u)-\big(1-\frac{\varphi(u)}{t-S}\big)\Big(\Hess_0\QQ(\psi_0)/2+O\big(\frac{\|\psi_0\|^2}{t-S}\big)\Big)} d\psi_0dS+O(t^{-N}).\end{alignat} 

\noindent

We finally multiply by  ${\sqrt t}^{d-1},$ take limit as $t\to\infty$ and trace back our definitions to get, as we assumed $N>(d-1)/2,$ the convergence of equation (\ref{tamoahi}) (and thus that of equation (\ref{dondetamo})) to \begin{alignat}{2}\label{finalpre}\lim_{t\to\infty}\sqrt{t}^{d-1}\int_{U^*}\kappa(\psi) & \frac{(Q_\psi p_F)x}{1-\lambda_\psi}e^{i\psi\big(u-t\tau\big)}\cal Fv_G(\psi)d\psi \nonumber\\ & =  b(0)\int_{-\infty}^\infty \cal Fc_x(S)dS \int_{\sf T_\varphi\cal Q_{k}}e^{-\frac12\Hess_0\QQ(\psi_0)}d\psi_0 \nonumber\\ & =c_x(0)b(0)\int_{\sf T_\varphi\cal Q_{k}}e^{-\frac12\Hess_0\QQ(\psi_0)}d\psi_0\nonumber\\ & = \frac{Q_0(p_F)(x)\cal F v_G(0)}{a(0)}\int_{\sf T_\varphi\cal Q_{k}}e^{-\frac12\Hess_0\QQ(\psi_0)}d\psi_0\nonumber\\&= \frac{\int_{\EE^+} p_Fd\nu\int_U v_G(u)du}{h\int_{\EE^+} rd\nu}\int_{\sf T_\varphi\cal Q_{k}}e^{-\frac12\Hess_0\QQ(\psi_0)}d\psi_0,
\end{alignat}

\noindent
where we have used equation (\ref{Q0}) and the formula $\cal Fv_G(0)=\int_U v_G(u)du.$ Observe that, as $\Hess_0\QQ$ is positive-definite, the integral $\int_{\sf T_\varphi\cal Q_{k}}e^{-\frac12\Hess_0\QQ(\psi_0)}d\psi_0$ is finite (and non-zero).

\begin{obs}\label{sqrtt} If one modifies the action $(x,u)\mapsto(x,u-t\tau)$ to consider the induced on the quotient by $(x,u)\mapsto(x,u-t\tau-\sqrt t w_0)$ for a fixed $w_0\in\ker\varphi$ then tracing the compuations one readily sees that \begin{alignat*}{2}\lim_{t\to\infty}\sqrt{t}^{d-1}\int_{U^*}\kappa(\psi) & \frac{(Q_\psi p_F)x}{1-\lambda_\psi}e^{i\psi\big(u-t\tau-\sqrt t w_0\big)}\cal Fv_G(\psi)d\psi\\ & = \frac{\int_{\EE^+} p_Fd\nu\int_U v_G(u)du}{h\int_{\EE^+} rd\nu}\int_{\sf T_\varphi\cal Q_{k}}e^{-i\psi_0(w_0)}e^{-\frac12\Hess_0\QQ(\psi_0)}d\psi_0\\ & = \frac{\int_{\EE^+} p_Fd\nu\int_U v_G(u)du}{h\int_{\EE^+} rd\nu}\cal F\mathrm H (w_0),\end{alignat*}
where we have defined $$\mathrm H(\psi_ 0)=e^{-(1/2)\Hess_0(\psi_0)}.$$ Observe that, as $\mathrm H(\psi_0)=\mathrm H(-\psi_0),$ the value of the Fourier transform $\cal F\mathrm H (w_0)\in\R.$
\end{obs}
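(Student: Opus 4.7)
The plan is to retrace, step by step, the argument given above for Theorem \ref{mixing0}, inserting in the key integrals the extra factor produced by the additional translation by $\sqrt{t}w_0$. Concretely, the modified action amounts to multiplying the integrand in equation (\ref{22}) by $e^{-i\sqrt{t}\psi(w_0)}$. Decomposing $\psi=s_\psi\varphi+\psi_0$ along $U^{*}=\R\varphi\oplus\sf T_\varphi\cal Q_k$ and using the hypothesis $w_0\in\ker\varphi$ (so that $\varphi(w_0)=0$), this factor simplifies to $e^{-i\sqrt{t}\psi_0(w_0)}$, depending only on the transverse coordinate.

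Next, I would verify that each of the $O(t^{-N})$ contributions in the proof of Theorem \ref{mixing0} --- namely the pieces (\ref{modif2}) and (\ref{Marafue}) --- remains negligible after insertion of this unimodular factor: it is smooth in $\psi_0$ with Fourier-frequency of order $\sqrt{t}$, so repeated integration by parts against the compactly supported $\clase^N$ function $\cal F v_G$ still produces an $o(t^{-(d-1)/2})$ bound, provided $N$ is taken a bit larger than in the original argument. The essential computation is therefore the bulk term (\ref{tamoahi}). After the change of variables $\psi_0\mapsto\psi_0/\sqrt{t-S}$ used there, the added factor becomes $e^{-i\sqrt{t/(t-S)}\,\psi_0(w_0)}$, which, for $|S|$ in the effective support of the Schwartz function $\cal F c_x$ and under the localization $S\leq t/2$ already imposed in (\ref{tamoahi}), converges pointwise to $e^{-i\psi_0(w_0)}$ as $t\to\infty$.

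A dominated-convergence argument then lets one pull the limit inside: for $t$ large, one has $1-\varphi(u)/(t-S)\geq 1/2$, so the integrand is dominated by $|\cal F c_x(S)|\cdot e^{-\frac14\Hess_0\QQ(\psi_0)}$, which is integrable. The limit therefore replaces the Gaussian $\int_{\sf T_\varphi\cal Q_k}e^{-\frac12\Hess_0\QQ(\psi_0)}d\psi_0$ appearing in (\ref{finalpre}) by its Fourier-twisted version $\int_{\sf T_\varphi\cal Q_k}e^{-i\psi_0(w_0)}e^{-\frac12\Hess_0\QQ(\psi_0)}d\psi_0=\cal F\mathrm H(w_0)$, giving the claim. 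The reality $\cal F\mathrm H(w_0)\in\R$ is immediate from the evenness of $\mathrm H$ on $\sf T_\varphi\cal Q_k$.

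The main obstacle is the uniform integrability needed to interchange the $t\to\infty$ limit with the double integral in $(S,\psi_0)$: the Fourier-frequency $\sqrt{t/(t-S)}$ in $\psi_0$ grows without bound if $S$ is allowed to approach $t$, which is precisely why the cut-off $S\leq t/2$ from the proof of Theorem \ref{mixing0} is essential here as well. Once this localization is in place, no new ingredient beyond those already employed is required; the argument is a bookkeeping variant of the preceding computation, the only new substantive input being the identification of the resulting oscillatory Gaussian integral as $\cal F\mathrm H(w_0)$.
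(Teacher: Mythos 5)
Your proposal is correct and follows exactly the route the paper intends: the remark is stated as a consequence of "tracing the computations" of the proof of Theorem \ref{mixing0}, and you do precisely that, inserting the unimodular factor $e^{-i\sqrt{t}\,\psi_0(w_0)}$ (using $w_0\in\ker\varphi$) and observing that after the rescaling $\psi_0\mapsto\psi_0/\sqrt{t-S}$ it converges to $e^{-i\psi_0(w_0)}$, turning the Gaussian integral into $\cal F\mathrm H(w_0)$. The only superfluous caution is your suggestion to enlarge $N$ for the error terms: since the new factor is independent of $s$ and bounded by $1$, the integration by parts in $s$ that yields the $O(t^{-N})$ bounds is unaffected.
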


We finally group back the equations to get the desired result. Indeed, equation (\ref{teinque}) is \begin{alignat*}{2}\sqrt{t}^{d-1} & \int_{\EE^+\times U}  \sum_{n\in\N}F(x,u)G\big(\sigma^nx,u-S_nk(x)-t\tau\big)d\nu d\Leb_U\\  & = \sqrt{t}^{d-1}\int p_G(x)v_F(u) \int\kappa(\psi)\frac{(Q_\psi p_F)x}{1-\lambda_\psi}e^{i\psi\big(u-t\tau\big)}\cal Fv_G(\psi)d\psi dudx+O(1/t)\\ &= \frac{\int_{\EE^+} p_Fd\nu\int_U v_G(u)du}{h\int_{\EE^+} rd\nu}\int p_G(x) v_F(u)dxdu\int_{\sf T_\varphi\cal Q_{k}}e^{-\frac12\Hess_0\QQ(\psi_0)}d\psi_0 +O(1/t)\\ & =\frac{\cal F\mathrm H(0)}{h\int_{\EE^+} rd\nu}\int Fd\nu d\Leb \int Gd\nu d\Leb+O(1/t), \end{alignat*} where we have used, in the first equality, equations (\ref{modif2}) and (\ref{Marafue}) and the fact that $N>(1/2)(d-1)$ and, in the second equality, the convergence from equation (\ref{finalpre}). This completes the sketch of proof.

\bibliography{hdr0}

\begin{thebibliography}{10}

\bibitem{AaronsonLibro}
J.~Aaronson.
\newblock {\em {An introduction to $\textrm{E}$rgodic $\textrm{T}$heory in
  infinite measure}}.
\newblock American Mathematical Society, 1997.

\bibitem{akerkar}
R.~Akerkar.
\newblock {\em Nonlinear functional analysis}.
\newblock Narosa Publishing House, 1999.

\bibitem{babillot}
M.~Babillot.
\newblock {Th{\'e}orie du renouvellement pour des cha\^{\i}nes
  semi-$\textrm{M}$arkoviennes transientes}.
\newblock {\em Ann. Inst. Henri Poincar{\'e}}, 24(4):507--569, 1988.

\bibitem{babillotSurvey}
M.~Babillot.
\newblock {Points entiers et groupes discrets: de l'analyse aux syst{\`e}mes
  dynamiques}.
\newblock In {\em {Rigidit{\'e}, groupe fondamental et dynamique}}, {Panoramas
  et synth{\`e}ses}. Soci{\'e}t{\'e} math{\'e}matique de France, 2002.

\bibitem{babled}
M.~Babillot and F.~Ledrappier.
\newblock {Lalley's theorem on periodic orbits of hyperbolic flows}.
\newblock {\em Ergod. Th. \& Dynam. Sys.}, 18, 1998.

\bibitem{Benoist-HomRed}
Y.~Benoist.
\newblock {Actions propres sur les espaces homog{\`e}nes r{\'e}ductifs}.
\newblock {\em Ann. of Math.}, 144:315--347, 1996.

\bibitem{limite}
Y.~Benoist.
\newblock {Propri{\'e}t{\'e}s asymptotiques des groupes lin{\'e}aires}.
\newblock {\em Geom. Funct. Anal.}, 7(1):1--47, 1997.

\bibitem{benoist2}
Y.~Benoist.
\newblock {Propri{\'e}t{\'e}s asymptotiques des groupes lin{\'e}aires II}.
\newblock {\em Adv. Stud. Pure Math.}, 26:33--48, 2000.

\bibitem{BQlibro}
Y.~Benoist and J.-F. Quint.
\newblock {\em {Random walks on reductive groups}}.
\newblock {Ergebnisse der Mathematik und ihrer Grenzgebiete. 3. Folge / A
  Series of Modern Surveys in Mathematics 62}. Springer International
  Publishing, 1 edition, 2016.

\bibitem{BPS}
J.~Bochi, R.~Potrie, and A.~Sambarino.
\newblock {Anosov Representations and dominated splittings}.
\newblock {\em J. Eur. Math. Soc.}, 21(11):3343--3414, 2019.

\bibitem{Borel-Libro}
A.~Borel.
\newblock {\em {Linear algebraic groups}}.
\newblock Springer, 2nd edition, 1991.

\bibitem{bowditch}
B.~H. Bowditch.
\newblock {Convergence groups and configuration spaces}.
\newblock In {\em {Geometric group theory down under}}, pages 23--54. de
  Gruyter, Berlin, 1999.

\bibitem{bowen1}
R.~Bowen.
\newblock {Periodic orbits of hyperbolic flows}.
\newblock {\em Amer. J. Math.}, 94:1--30, 1972.

\bibitem{bowen2}
R.~Bowen.
\newblock {Symbolic dynamics for hyperbolic flows}.
\newblock {\em Amer. J. Math.}, 95:429--460, 1973.

\bibitem{bowenruelle}
R.~Bowen and D.~Ruelle.
\newblock {The ergodic theory of $\textrm{A}$xiom $\textrm{A}$ flows}.
\newblock {\em Invent. Math.}, 29:181--202, 1975.

\bibitem{pressure}
M.~Bridgeman, R.~Canary, F.~Labourie, and A.~Sambarino.
\newblock {The pressure metric for Anosov representations}.
\newblock {\em Geom. Funct. Anal.}, 25(4):1089--1179, 2015.

\bibitem{Liouvillepressure}
M.~Bridgeman, R.~Canary, F.~Labourie, and A.~Sambarino.
\newblock {Simple roots flows for Hitchin representations}.
\newblock {\em Geom. Dedic. William Goldman's 60th birthday special edition},
  192:57--86, 2018.

\bibitem{BLLH}
M.~Burger, O.~Landesberg, M.~Lee, and H.~Oh.
\newblock {The $\mathrm{H}$opf-$\mathrm{T}$suji-$\mathrm{S}$ullivan dichotomy
  in higher rank and applications to Anosov subgroups}.
\newblock https://arxiv.org/abs/2105.13930.

\bibitem{DAT}
R.~Canary, T.~Zhang, and A.~Zimmer.
\newblock On preparation.

\bibitem{lyon1}
L.~Carvajales.
\newblock {Counting problems for special-orthogonal Anosov representations}.
\newblock {\em Ann. Inst. Fourier}, 70(3):1199--1257, 2020.

\bibitem{lyon2}
L.~Carvajales.
\newblock {Growth of quadratic forms under Anosov subgroups}.
\newblock {\em Inter. Math. Research Not.}, rnab181,
  \url{https://doi.org/10.1093/imrn/rnab181} 2021.

\bibitem{CDPW}
L.~Carvajales, X.~Dai, B.~Pozzetti, and A.~Wienhard.
\newblock Thurston's asymmetric metrics for $\textrm{A}$nosov representations.
\newblock In preparation.

\bibitem{Chow-Sarkar}
M.~Chow and P.~Sarkar.
\newblock {Local mixing of one-parameter diagonal flows on Anosov homogeneous
  spaces}.
\newblock https://arxiv.org/abs/2105.11377.

\bibitem{Coelho}
Z.~Coelho.
\newblock On the asymptotic range of cocyles for shifts of finite type.
\newblock {\em Ergod. Th. \& Dynam. Sys.}, 13:249--262, 1993.

\bibitem{Lafont}
D.~Constantine, J.-F. Lafont, and D.~J. Thompson.
\newblock {Strong symbolic dynamics for geodesic flows on $\mathrm{CAT}(-1)$
  spaces and other metric Anosov flows}.
\newblock {\em Journal de l'{\'E}cole Polytechnique Math{\'e}matiques},
  7(201--231), 2020.

\bibitem{Dey-Kapovich}
S.~Dey and M.~Kapovich.
\newblock {Patterson-Sullivan theory for Anosov groups}.
\newblock https://arxiv.org/pdf/1904.10196.pdf.

\bibitem{ghysharpe}
E.~Ghys and P.~{de la Harpe} (eds).
\newblock {\em {Sur les groupes hyperboliques d'apr{\`e}s Mikhael Gromov}}.
\newblock $\textrm{Birkh{\"a}user}$, 1990.

\bibitem{GKLM}
P.~Giulietti, B.~R. Kloeckner, A.~O. Lopes, and D.~Marcon.
\newblock {The calculus of thermodynamical formalism}.
\newblock {\em J. Eur. Math. Soc.}, 20(10):2357--2412, 2018.

\bibitem{GMT}
O.~Glorieux, D.~Monclair, and N.~Tholozan.
\newblock {Hausdorff dimension of limit set for projective Anosov groups}.
\newblock https://arxiv.org/abs/1902.01844, 2018.

\bibitem{gromov}
M.~Gromov.
\newblock {Hyperbolic groups}.
\newblock In {\em {Essays in group theory}}, pages 75--263. MSRI Publ., 1987.

\bibitem{GGKW}
F.~Gu{\'e}ritaud, O.~Guichard, F.~Kassel, and A.~Wienhard.
\newblock {Anosov representations and proper actions}.
\newblock {\em Geom. \& Top.}, 21:485--584, 2017.

\bibitem{olivieranna}
O.~Guichard and A.~Wienhard.
\newblock {Anosov representations: domains of discontinuity and applications}.
\newblock {\em Invent. Math.}, 190:357--438, 2012.

\bibitem{guivarch}
Y.~Guivarch'h.
\newblock {Produits de matrices al{\'e}atoires et applications aux
  propri{\'e}t{\'e}s g{\'e}om{\'e}triques de sous-groupes du groupe
  lin{\'e}aire}.
\newblock {\em Ergod. Th. \& Dynam. Sys.}, 10(3), 1990.

\bibitem{helga}
S.~Helgason.
\newblock {\em {Differential geometry, Lie groups and symmetric spaces}}.
\newblock $Academic\ Press$, 1978.

\bibitem{Hormander}
L.~H\"ormander.
\newblock {\em The analysis of partial differential operators I}, volume 256 of
  {\em A series of comprehensive studies in mathematics}.
\newblock Springer, 1983.

\bibitem{kaimanovichPSBM}
V.~Kaimanovich.
\newblock {Bowen-$\textrm{M}$argulis and $\textrm{P}$atterson measures on
  negatively curved compact manifolds}.
\newblock In Adv. Ser.~Dynam. Systems, editor, {\em {Dynamical Systems and
  Related Topics (Nagoya 1990)}}, volume~9, River Edge N.J., 1991. World Sci.
  Publ.

\bibitem{KLP-Morse}
M.~Kapovich, B.~Leeb, and J.~Porti.
\newblock {A Morse Lemma for quasigeodesics in symmetric spaces and euclidean
  buildings}.
\newblock {\em Geom. \& Top.}, 22:3827--3923, 2018.

\bibitem{katokh}
A.~Katok and B.~Hasselblatt.
\newblock {\em {Introduction to the modern theory of dynamical systems}}.
\newblock Cambridge, 1995.

\bibitem{labourie}
F.~Labourie.
\newblock {Anosov Flows, Surface Groups and Curves in Projective Space}.
\newblock {\em Invent. Math.}, 165:51--114, 2006.

\bibitem{ledrappier}
F.~Ledrappier.
\newblock {Structure au bord des vari{\'e}t{\'e}s {\`a} courbure n{\'e}gative}.
\newblock {\em S{\'e}minaire de th{\'e}orie spectrale et g{\'e}om{\'e}trie de
  Grenoble}, 71:97--122, 1994-1995.

\bibitem{Ledrappier-Sarig}
F.~Ledrappier and O.~Sarig.
\newblock Unique ergodicity for non-uniquely ergodic horocycle flows.
\newblock {\em Discrete and Cont. Dynam. Syst.}, 16(2):411--433, 2006.

\bibitem{HoLee}
M.~Lee and H.~Oh.
\newblock {Invariant measures for horospherical actions and Anosov groups}.
\newblock https://arxiv.org/abs/2008.05296.

\bibitem{livsic}
A.N. {Liv\v sic}.
\newblock {Cohomology of dynamical systems}.
\newblock {\em Math. USSR Izvestija}, 6:1278--1301, 1972.

\bibitem{margulistesis}
G.~Margulis.
\newblock {Applications of ergodic theory to the investigation of manifolds
  with negative curvature}.
\newblock {\em Functional Anal. Appl.}, 3:335--336, 1969.

\bibitem{MargulisMedida}
G.~Margulis.
\newblock {Certain measures associated with $\mathrm{U}$-flows on compact
  manifolds}.
\newblock {\em Functional Anal. Appl.}, 4:55--67, 1969.

\bibitem{matsumoto}
H.~Matsumoto.
\newblock {\em {Analyse harmonique dans les syst{\`e}mes de $\textrm{T}$its
  bornologiques de type affine}}.
\newblock {Lecture Notes in Mathematics}. Springer Verlag, Berlin, 1977.

\bibitem{mineyev}
I.~Mineyev.
\newblock {Flows and joins of metric spaces}.
\newblock {\em Geom. \& Top.}, 9:403--482, 2005.

\bibitem{Oh-Pan}
H.~Oh and W.~Pan.
\newblock Local mixing and invariant measures for horospherical subgroups on
  $\textrm{A}$belian covers.
\newblock {\em Inter. Math. Research Not.}, 19:6036--6088, 2019.

\bibitem{parrypollicott}
W.~Parry and M.~Pollicott.
\newblock {\em {Zeta Functions and the periodic orbit structure of hyperbolic
  dynamics}}, volume 187-188.
\newblock Ast{\'e}risque, 1990.

\bibitem{PPS}
F.~Paulin, M.~Pollicott, and B.~Schapira.
\newblock {\em Equilibrium states in negative curvature}, volume 373 of {\em
  Ast{\'e}risque}.
\newblock Soci{\'e}t{\'e} Math{\'e}matique de France, 2015.

\bibitem{Pollicott-MM}
M.~Pollicott.
\newblock {Margulis distribution for Anosov flows}.
\newblock {\em Comm. Math. Phys.}, 113(137--154), 1987.

\bibitem{smaleflows}
M.~Pollicott.
\newblock {Symbolic dynamics for $\textrm{S}$male flows}.
\newblock {\em Amer. Journ. of Math.}, 109(1):183--200, 1987.

\bibitem{exponentecritico}
R.~Potrie and A.~Sambarino.
\newblock {Eigenvalues and entropy of a $\textrm{Hitchin}$ representation}.
\newblock {\em Invent. Math.}, 209:885--925, 2017.

\bibitem{PSW2}
B.~Pozzetti, A.~Sambarino, and A.~Wienhard.
\newblock {Anosov representations with Lipschitz limit set}.
\newblock To appear in \emph{Geom. \& Top.}, https://arxiv.org/abs/1910.06627,
  2019.

\bibitem{PSW1}
B.~Pozzetti, A.~Sambarino, and A.~Wienhard.
\newblock {Conformality for a robust class of non conformal attractors}.
\newblock {\em J. Reine Angew. Math.}, 2021(774):1--51, 2021.

\bibitem{Quint-localFields}
J.-F. Quint.
\newblock {C\^ones limites de sous-groupes discrets des groupes r{\'e}ductifs
  sur un corps local}.
\newblock {\em Transformation groups}, 7(3):247--266, 2002.

\bibitem{quint2}
J.-F. Quint.
\newblock {Divergence exponentielle des sous-groupes discrets en rang
  sup{\'e}rieur}.
\newblock {\em Comment. Math. Helv.}, 77:563--608, 2002.

\bibitem{quint1}
J.-F. Quint.
\newblock {Mesures de $\textrm{P}$atterson-$\textrm{S}$ullivan en rang
  sup{\'e}rieur}.
\newblock {\em Geom. Funct. Anal.}, 12:776--809, 2002.

\bibitem{ratner2}
M.~Ratner.
\newblock {Markov partitions for $\textrm{A}$nosov flows on $n$-dimensional
  manifolds}.
\newblock {\em Israel Journal of Math.}, 15(1):92--114, 1973.

\bibitem{exponential}
A.~Sambarino.
\newblock {Hyperconvex representations and exponential growth}.
\newblock {\em Ergod. Th. \& Dynam. Sys.}, 34(3):986--1010, 2014.

\bibitem{quantitative}
A.~Sambarino.
\newblock {Quantitative properties of convex representations}.
\newblock {\em Comment. Math. Helv.}, 89(2):443--488, 2014.

\bibitem{orbitalcounting}
A.~Sambarino.
\newblock {The orbital counting problem for hyperconvex representations}.
\newblock {\em Ann. Inst. Fourier}, 65(4):1755--1797, 2015.

\bibitem{Schmidt1}
K.~Schmidt.
\newblock {\em Cocycles of ergodic transformation groups}.
\newblock Macmillan, India, 1977.

\bibitem{sullivan}
D.~Sullivan.
\newblock {The density at infinity of a discrete group of hyperbolic motions}.
\newblock {\em Publ. Math. de l'I.H.E.S.}, 50:171--202, 1979.

\bibitem{Tits}
J.~Tits.
\newblock {Repr{\'e}sentations lin{\'e}aires irr{\'e}ductibles d'un groupe
  r{\'e}ductif sur un corps quelconqe}.
\newblock {\em J. Reine Angew. Math.}, 247:196--220, 1971.

\end{thebibliography}
\bibliographystyle{plain}

\author{\vbox{\footnotesize\noindent 
	Andr\'es Sambarino\\
	CNRS - Sorbonne Universit\'e \\ IMJ-PRG (CNRS UMR 7586)\\ 
	4 place Jussieu 75005 Paris France\\
	\texttt{andres.sambarino@imj-prg.fr}
\bigskip}}

\end{document}